\documentclass{article}

\usepackage{PRIMEarxiv}

\usepackage[utf8]{inputenc} 
\usepackage[T1]{fontenc}    
\usepackage{hyperref}       
\usepackage{url}            
\usepackage{booktabs}       
\usepackage{amsfonts}       
\usepackage{nicefrac}       
\usepackage{microtype}      
\usepackage{lipsum}
\usepackage{fancyhdr}       
\usepackage{graphicx}       
\graphicspath{{media/}}     

\usepackage{natbib}

\usepackage{amsmath,amsfonts,bm}

\def\eqref#1{equation~\ref{#1}}

\def\1{\bm{1}}

\usepackage{amsmath,amssymb,amsfonts,mathtools,bm}
\usepackage{amsthm}
\usepackage{microtype}
\usepackage{booktabs,multirow,array,tabularx}
\usepackage{algorithm}
\usepackage{algpseudocode}
\usepackage{enumitem}
\usepackage{graphicx}
\usepackage{wrapfig}
\usepackage{rotating}
\usepackage{url}
\usepackage{hyperref}
\usepackage{xcolor}
\usepackage{makecell}
\usepackage{pifont}
\usepackage{etoolbox}
\hypersetup{colorlinks=true,citecolor=blue!60!black,linkcolor=blue!60!black,urlcolor=blue!60!black}
\setlist[itemize]{leftmargin=*,topsep=2pt,itemsep=1pt,parsep=0pt}
\setlist[enumerate]{leftmargin=*,topsep=2pt,itemsep=1pt,parsep=0pt}

\DeclareMathAlphabet{\mathsfit}{\encodingdefault}{\sfdefault}{m}{sl}
\SetMathAlphabet{\mathsfit}{bold}{\encodingdefault}{\sfdefault}{bx}{n}

\newcommand{\E}{\mathbb{E}}

\newcommand{\R}{\mathbb{R}}

\newtheorem{theorem}{Theorem}

\newtheorem{lemma}[theorem]{Lemma}

\newtheorem{assumption}[theorem]{Assumption}
\newtheorem{remark}[theorem]{Remark}
\theoremstyle{definition}

\newcommand{\Prb}{\mathbb{P}}

\newcommand{\ip}[2]{\left\langle #1,#2\right\rangle}

\newcommand{\clip}{\operatorname{clip}}

\definecolor{tablecheckgreen}{RGB}{0,150,70}
\definecolor{tablecrossred}{RGB}{210,35,45}
\newcommand{\yes}{{\color{tablecheckgreen}\ding{51}}}
\newcommand{\no}{{\color{tablecrossred}\ding{55}}}

\DeclareMathOperator{\diam}{diam}
\DeclareMathOperator{\Ber}{Ber}

\usepackage{float}

\floatstyle{ruled}
\newfloat{procedure}{tbp}{lop}
\floatname{procedure}{Procedure}
  
\title{Accelerated Stochastic Method under $(H_0,H_1)$-Smoothness and Heavy-Tailed Noise
}

\author{
Aleksandr Lobanov\\
MSU AI Center\\
\texttt{lobanovav@my.msu.ru}
 \And
 Darina~Dvinskikh\\
 HSE University \\
\texttt{dmdvinskikh@hse.ru}\\
   \And
  Alexander Gasnikov\\
Innopolis University\\
\texttt{gasnikov@yandex.ru} 
}

\begin{document}
\maketitle

\begin{abstract}
We develop an accelerated stochastic method for convex $(H_0,H_1)$-smooth optimization under heavy-tailed noise. The unbiased oracle has a finite $p$-th noise moment, $1<p\le2$, with constant, gradient-dependent, and gap-dependent terms. Our method combines accelerated updates with clipping, projection, and phase restarts. Each iteration uses a single stochastic-gradient sample, without minibatching. We prove high-probability convergence to any desired accuracy despite clipping bias. The deterministic terms retain square-root dependence on both $H_0$ and $H_1$. In our three-component noise model, strong convexity leaves a polynomial accuracy cost only for the constant component; without it, the dependence is logarithmic even for $p<2$. More generally, for noise moments scaling as $(f(x)-f^\star)^\alpha$, the stochastic cost becomes logarithmic at $\alpha=p$ for convex objectives and $\alpha=p/2$ under strong convexity.
\end{abstract}


\section{Introduction}
Training modern AI models is not only stochastic; it can also be intermittently unstable. Gradient clipping was introduced as an effective response to exploding gradients in recurrent networks \citep{pascanu2013rnn}, and it is now a standard safeguard in large-scale learning. Yet instability remains costly at scale: the largest PaLM model experienced roughly twenty irregular loss spikes despite gradient clipping, requiring checkpoint recovery and skipped data batches \citep{chowdhery2023palm}. Clipping is also not theoretically innocuous: applied to stochastic gradients, it generally creates bias whose effect depends on the threshold and on the noise model \citep{koloskova2023clipping}. These observations expose a tension between acceleration and robustness.

Two forms of nonuniformity make this tension particularly relevant to machine learning. First, the local curvature of a neural-network loss can vary substantially along training and correlate with the gradient scale, motivating $(L_0,L_1)$ and more general smoothness models \citep{zhang2020clipping,li2023generalized}. Recent work on $(H_0,H_1)$-smoothness treats curvature as loss-dependent and shows that the admissible local step size can increase as the objective gap decreases. This model provides theoretical and empirical explanations of learning-rate warmup on language and vision models \citep{liu2025warmup,alimisis2025warmup}. Second, studies across architectures, datasets, and discriminative and generative models find strongly non-Gaussian, heavy-tailed stochastic-gradient noise, making large errors especially harmful to momentum \citep{simsekli2019tail,battash2024noise}. A global smoothness constant together with a state-independent noise bound can therefore be simultaneously pessimistic about both the geometry and the statistics encountered along the trajectory.

Beyond being heavy-tailed, stochastic-gradient noise can depend on the current iterate. In interpolation regimes, component gradients vanish at a common solution, while expected-smoothness conditions make their variability decrease with the objective gap \citep{ma2018interpolation,gower2019sgd}. This motivates separating residual, gradient-dependent, and gap-dependent noise in our oracle model.

Existing results address important parts of this picture separately. Accelerated clipping methods handle uniform heavy-tailed noise under classical smoothness \citep{gorbunov2020heavy,nguyen2023clipped}; generalized-smoothness analyses allow bounded-variance or affine-variance noise \citep{li2023generalized,yuhonglin2025rsag}; recent high-probability acceleration under generalized smoothness treats additive norm-sub-Gaussian noise \citep{dvinskikh2026localize}; and deterministic acceleration is known for the convex $(H_0,H_1)$ class \citep{lobanov2026h01}. The closest interpolation results under heavy tails analyze clipped or normalized SGD without accelerated high-probability guarantees for the mixed model \citep{lobanov2026bias}. This leads to the central question:
\begin{center}
\emph{Can clipping retain an accelerated optimization term while reaching arbitrary accuracy\\
under $(H_0,H_1)$-smoothness and mixed heavy-tailed noise?}
\end{center}

Our answer is affirmative for differentiable convex objectives. We first describe the geometry. With $F(x)=f(x)-f^\star$, the Hessian condition $\|\nabla^2 f(x)\|\le H_0+H_1F(x)$ allows high curvature at large gaps and lower curvature near the optimum \citep{vaswani2025armijo,liu2025warmup,alimisis2025warmup,lobanov2026bias,vaswani2026nonuniform}. We use the weaker first-order bound $f(y)\le f(x)+\langle\nabla f(x),y-x\rangle+(H_0+H_1F(x))\|y-x\|^2$ for $\|y-x\|\le r_H$, requiring only differentiability. For twice continuously differentiable convex functions, the conditions agree up to absolute constants \citep{lobanov2026h01}. Since this bound applies only within $r_H$, momentum can leave its valid range; preventing this is the first difficulty.

We next describe the stochastic model. Expected smoothness bounds $\mathbb E_\xi\|g(x,\xi)-g(x^\star,\xi)\|^2$ by a multiple of $F(x)$ \citep{gower2019sgd}; related models add constant, gap-dependent, or gradient-dependent terms \citep{khaled2023better,faw2022adaptivity,hong2024adagrad}, while finite-$p$ variants allow heavy tails \citep{zhanglin2026scale}. We combine them: for a fresh sample $\xi$ and $1<p\le2$, $g(x,\xi)$ satisfies $\mathbb E_\xi[g(x,\xi)]=\nabla f(x)$ and $\mathbb E_\xi\|g(x,\xi)-\nabla f(x)\|^p\le\sigma_0^p+\sigma_1^p\|\nabla f(x)\|^p+\sigma_2^pF(x)^{p/2}$. The terms describe noise that remains at the optimum, scales with the gradient, or decreases with the gap; $p<2$ permits infinite variance. The noise scale therefore changes between momentum queries. Clipping limits large samples but creates bias; controlling both is the second difficulty.

We address both difficulties with Restarted Clipped Stochastic Accelerated Gradient (RCSAG), illustrated in Figure~\ref{fig:rcsag-schematic}; Section~\ref{sec:phase} gives the full algorithm. Each phase starts with the gap bound $F(w)\le\Delta$ (panel (a)). Tracking the gap and distance to $x^\star$, we show that $u_t\in Q$, $F(u_t)\le4\Delta$, and all local-model displacements are at most $r_H$; this fixes phasewise bounds on curvature and noise. Panel (b) forms $u_t$ from $y_{t-1}$ and $z_{t-1}$ and clips its stochastic gradient; panel (c) projects $\widetilde z_t$ onto $Q$; and panel (d) averages $y_{t-1}$ with $z_t$ to form $y_t$. These bounds control random gradient errors and clipping bias without losing acceleration. With high probability, each phase halves the gap, and restarting from $y_T$ reaches the target accuracy.

\begingroup
\setlength{\intextsep}{3pt}
\setlength{\abovecaptionskip}{2pt}
\setlength{\belowcaptionskip}{0pt}
\begin{figure}[H]
\centering
\includegraphics[width=0.92\textwidth]{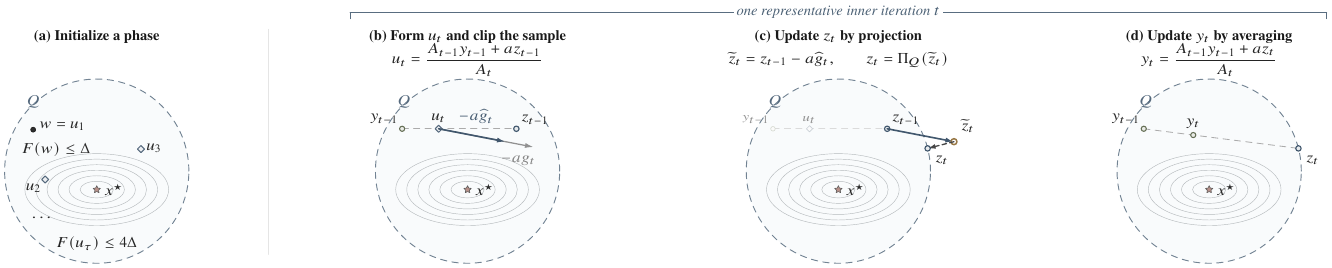}
\caption{RCSAG: initialize, form $u_t$, clip, project, average, and restart after $T$ steps.}
\label{fig:rcsag-schematic}
\end{figure}
\endgroup

The geometric part of our bounds retains the square-root dependence of deterministic acceleration on both curvature scales. Despite clipping bias, the method reaches any accuracy with high probability and separates the three noise components. Under strong convexity, only $\sigma_0$ forces polynomial dependence on $1/\varepsilon$; if $\sigma_0=0$, it is logarithmic even for $p<2$. For noise with $p$th moment bounded by $F(x)^\alpha$, the cumulative cost is polynomial, logarithmic, or summable, with thresholds $\alpha=p$ for convex and $\alpha=p/2$ for strongly convex objectives. On classical smooth subclasses, the dependence on $H_0$, $\sigma_0$, and $\sigma_2$ matches lower bounds up to logarithms, while the $\sigma_1$ rate remains conservative.

\paragraph{Contributions.}
Our main contributions are:
\begin{itemize}
\item \textbf{Accelerated geometry under mixed heavy-tailed noise} (Section~\ref{sec:main-results}). We establish high-probability guarantees for convex and strongly convex objectives under $(H_0,H_1)$-smoothness and mixed heavy-tailed noise. The noise-independent geometric terms retain the square-root dependence on both $H_0$ and $H_1$ of deterministic accelerated methods.
\item \textbf{Single-sample clipping without an error floor} (Sections~\ref{sec:phase} and~\ref{sec:main-results}). Although clipping biases each gradient estimate, RCSAG reaches any prescribed accuracy with high probability using one fresh stochastic gradient per inner iteration and no minibatching. The bounds separate noise that remains at the optimum, scales with the gradient, or decreases with the objective gap. Under strong convexity, when $\sigma_0=0$, the target-accuracy dependence remains logarithmic even for $p<2$.
\item \textbf{A noise-decay phase transition} (Section~\ref{sec:phase-transition}). For noise satisfying $\mathbb E_\xi\|g(x,\xi)-\nabla f(x)\|^p\lesssim F(x)^\alpha$, we characterize when its cumulative cost is polynomial, logarithmic, or summable. The critical exponent is $\alpha=p$ for convex objectives and $\alpha=p/2$ under strong convexity.
\end{itemize}

The paper focuses on convex objectives in order to expose the interaction between acceleration, nonuniform curvature, and state-dependent heavy tails with sharp objective-gap guarantees. Extending this mechanism to general nonconvex neural-network training is an important open direction.

\section{Related Work}\label{sec:related}
\vspace{-0.5em}
\paragraph{Optimization beyond a global smoothness constant.}
Classical acceleration uses one global smoothness constant, whereas generalized-smoothness models allow curvature to change during optimization. Under gradient-dependent $(L_0,L_1)$-smoothness, existing analyses explain clipping and normalization \citep{zhang2020clipping}, identify distinct convergence regimes for gradient descent and its variants \citep{lobanov2024linear}, and extend these results to stochastic and biased gradients, as well as zeroth-order methods \citep{lobanov2025power}. Acceleration has also been studied under broader generalized smoothness with bounded variance \citep{li2023generalized}, with variance depending on the current gap or gradient norm \citep{yuhonglin2025rsag}, and under $(L_0,L_1)$-smoothness with additive norm-sub-Gaussian noise \citep{dvinskikh2026localize}. For the $(H_0,H_1)$ model, \citet{lobanov2026bias} give non-accelerated guarantees in deterministic and interpolation settings, while \citet{lobanov2026h01} establish deterministic acceleration. Neither result gives accelerated guarantees for the mixed heavy-tailed model considered here. \emph{Our bounds retain the accelerated square-root dependence on both $H_0$ and $H_1$ while allowing mixed finite-$p$ heavy-tailed noise.}

\vspace{-0.5em}
\paragraph{Noise that decreases during optimization.}
A fixed global noise bound misses the fact that stochastic gradients may become more accurate near a solution. Some analyses assume that stochastic gradients approach their values at the optimum as the objective gap shrinks \citep{gower2019sgd,khaled2023better}, while related models combine a constant noise level with terms depending on the gradient or function value \citep{faw2022adaptivity,hong2024adagrad}. Finite-$p$ extensions allow such dependence under heavy tails \citep{zhanglin2026scale}. Under interpolation, \citet{lobanov2026bias} show that clipped and normalized SGD can avoid a clipping-induced error floor under generalized smoothness, while \citet{gupta2024agnes} retain acceleration for the special case of multiplicative finite-variance noise. Thus, decreasing noise can improve convergence, but previous guarantees treat only parts of the mixed model considered here. \emph{Our analysis handles $\mathbb E_\xi\|g(x,\xi)-\nabla f(x)\|^p\le\sigma_0^p+\sigma_1^p\|\nabla f(x)\|^p+\sigma_2^pF(x)^{p/2}$ without an error floor. When the noise decreases sufficiently fast near the optimum, it adds at most a logarithmic dependence on $1/\varepsilon$.}

\vspace{-1em}
\begin{table}[H]
\centering
\caption{\scriptsize Convex first-order guarantees and oracle complexity.}
\label{tab:related}
\small
\setlength{\tabcolsep}{3pt}
\renewcommand{\arraystretch}{1.08}
\resizebox{\textwidth}{!}{%
\begin{tabular}{llllccc}
\toprule
Work & Smoothness & Tail assumption & Oracle complexity & Varying noise & Accel. & HP\\
\midrule
\citet{gorbunov2020heavy} & $L$ & finite 2nd moment & $\widetilde O_\varepsilon(\varepsilon^{-1/2}+\varepsilon^{-2})$ & \no & \yes & \yes\\
\citet{nguyen2023clipped} & $L$ & finite $p$th moment & $\widetilde O_\varepsilon(\varepsilon^{-1/2}+\varepsilon^{-q})$ & \no & \yes & \yes\\
\citet{yuhonglin2025rsag} & $(L_0,L_1)$ & bounded / sub-Gaussian & $\widetilde O_\varepsilon(\varepsilon^{-1}+\varepsilon^{-2})$ & \yes & \no & \yes\\
\citet{dvinskikh2026localize} & $(L_0,L_1)$ & sub-Gaussian & $\widetilde O_\varepsilon(\varepsilon^{-1/2}+\varepsilon^{-2})$ & \no & \yes & \yes\\
\citet{lobanov2026bias} (det.) & $(H_0,H_1)$ & -- & $\widetilde O_\varepsilon(\varepsilon^{-1})$ & -- & \no & --\\
\citet{lobanov2026h01} (det.) & $(H_0,H_1)$ & -- & $\widetilde O_\varepsilon(\varepsilon^{-1/2})$ & -- & \yes & --\\
This work & $(H_0,H_1)$ & finite $p$th moment & $\widetilde O_\varepsilon(\varepsilon^{-1/2}+\varepsilon^{-q})^\dagger$ & \yes & \yes & \yes\\
\bottomrule
\end{tabular}
}
\vspace{-5pt}
\end{table}

Table~\ref{tab:related} compares the closest first-order results for convex objectives. The columns record smoothness, tail assumptions, oracle complexity, whether the noise varies along the trajectory, acceleration, and high-probability guarantees (HP). Here $q=p/(p-1)$, and $\widetilde O_\varepsilon$ shows only the dependence on $\varepsilon$, suppressing logarithms and $\varepsilon$-independent factors. A finite second moment still permits heavy-tailed noise, whereas sub-Gaussian noise is light-tailed. The symbols \yes, \no, and -- mean covered, not covered, and not applicable. For \citet{yuhonglin2025rsag}, the displayed general-noise guarantee has an $\varepsilon^{-1}$ optimization term and is therefore not marked as accelerated. In our worst-case mixed-noise entry ($\dagger$), the $\varepsilon^{-q}$ term comes only from $\sigma_0$; the $\sigma_1$ and $\sigma_2$ terms scale as $\varepsilon^{-q/2}$ in Theorem~\ref{thm:convex}. \emph{Among the listed results, ours is the only one that combines $(H_0,H_1)$-smoothness, finite-$p$ tails, noise depending on the current point, acceleration, and high-probability guarantees.}

\section{Problem Setup}\label{sec:setup}
We consider the optimization problem
\begin{equation}\label{eq:problem}
 f^\star:=\min_{x\in\R^d} f(x),
\end{equation}
where $f:\R^d\to\R$ maps a parameter vector to a scalar objective value. Throughout the paper, $\|\cdot\|$ is the Euclidean norm. We assume that the solution set $X^\star:=\arg\min_{x\in\R^d}f(x)$ is nonempty, fix an arbitrary $x^\star\in X^\star$ for the analysis, and write $F(x):=f(x)-f^\star$. Given an initial point $x_0$, an upper bound $F(x_0)\le\Delta_0$, a target accuracy $\varepsilon\in(0,\Delta_0]$, and a confidence level $\rho\in(0,1)$, our goal is to return $\widehat x$ such that $F(\widehat x)\le\varepsilon$ with probability at least $1-\rho$.

\subsection{Assumptions on the objective function}
We first specify the objective class considered throughout the paper.

\begin{assumption}[Convexity]\label{ass:convexity}
The function $f$ is differentiable and, for some $\mu\ge0$, satisfies
\[
 f(y)\ge f(x)+\ip{\nabla f(x)}{y-x}+\frac{\mu}{2}\|y-x\|^2,
 \qquad \forall x,y\in\R^d.
\]
\end{assumption}

Assumption~\ref{ass:convexity} is standard in the theory of accelerated first-order methods \citep{nesterov2004}. The case $\mu=0$ gives ordinary convexity, while $\mu>0$ gives strong convexity and is used only in Section~\ref{sec:strongly-convex}. The latter covers, for example, convex empirical-risk objectives with an $\ell_2$ regularizer.

Our main geometric assumption is the first-order form of $(H_0,H_1)$-smoothness used by the algorithm.

\begin{assumption}[$(H_0,H_1)$-smoothness]\label{ass:h01}
There exist $H_0,H_1\ge0$. Set $r_H:=((2\sqrt{3}+\sqrt{6})\sqrt{H_1})^{-1}$ when $H_1>0$ and $r_H:=\infty$ when $H_1=0$. Then, for all $x,y\in\R^d$ satisfying $\|y-x\|\le r_H$,
\[
 f(y)\le f(x)+\ip{\nabla f(x)}{y-x}
 +\bigl(H_0+H_1F(x)\bigr)\|y-x\|^2.
\]
\end{assumption}

Assumption~\ref{ass:h01} requires only differentiability. When $H_1=0$, it reduces to standard $L$-smoothness with $L=2H_0$. When $H_1>0$, the curvature scale $H_0+H_1F(x)$ may be large far from the optimum and approaches $H_0$ as the gap decreases. The upper bound is required only for $\|y-x\|\le r_H$. $(H_0,H_1)$-smoothness has been used to analyze line-search methods, learning-rate warmup, and first-order optimization beyond global smoothness \citep{vaswani2025armijo,liu2025warmup,alimisis2025warmup,lobanov2026bias,vaswani2026nonuniform,lobanov2026h01}.

\subsection{Assumptions on the stochastic gradient oracle}
We next introduce the notation used for the stochastic oracle. At a query point $x$, the algorithm observes a stochastic gradient $g(x,\xi_t)$ computed from a fresh sample $\xi_t$; the deterministic gradient is denoted by $\nabla f(x)$. Let $\mathcal F_t$ be the filtration generated by the samples through iteration $t$, and let $\E_t[\cdot]:=\E[\cdot\mid\mathcal F_{t-1}]$. Each query point at iteration $t$ is $\mathcal F_{t-1}$-measurable.

We impose the following condition on the conditional mean and $p$th moment of the stochastic gradient.

\begin{assumption}[Stochastic gradient oracle]\label{ass:oracle}
For some $1<p\le2$ and $\sigma_0,\sigma_1,\sigma_2\ge0$, every $\mathcal F_{t-1}$-measurable query $x$ admits a fresh sample $g(x,\xi_t)$ satisfying
\begin{equation}\label{eq:oracle}
 \E_t[g(x,\xi_t)]=\nabla f(x),\qquad
 \E_t\|g(x,\xi_t)-\nabla f(x)\|^p
 \le \sigma_0^p+\sigma_1^p\|\nabla f(x)\|^p+\sigma_2^pF(x)^{p/2}.
\end{equation}
\end{assumption}

The three terms allow noise that remains at the optimum, scales with the gradient, or decreases with the objective gap. When $\sigma_1=\sigma_2=0$, Assumption~\ref{ass:oracle} reduces to the finite-$p$ moment model standard in heavy-tailed stochastic optimization \citep{gorbunov2020heavy,nguyen2023clipped}; $p=2$ gives a second-moment bound, while $p<2$ does not require finite variance. Bounds depending on the gradient or objective gap are used to describe stochastic gradients that become more accurate during optimization \citep{gower2019sgd,faw2022adaptivity,khaled2023better,hong2024adagrad,zhanglin2026scale}. When $\sigma_0=0$, the bound vanishes at $x^\star$, capturing the decreasing noise found in interpolation settings \citep{ma2018interpolation,lobanov2026bias}. We write $q:=p/(p-1)$. One oracle call means one evaluation of $g(x,\xi_t)$ using one fresh sample.

\section{Restarted Clipped Stochastic Accelerated Gradient}\label{sec:phase}
Classical accelerated stochastic methods are typically tuned using a global smoothness constant and a uniform noise bound. Neither scale is fixed under Assumptions~\ref{ass:h01} and~\ref{ass:oracle}: curvature and noise may change along the trajectory, while the upper model in Assumption~\ref{ass:h01} is guaranteed only for displacements of length at most $r_H$. Consequently, momentum, clipping bias, and the validity of the upper model must be controlled simultaneously. We address this difficulty through gap-halving phases. Starting from a point $w$ satisfying $F(w)\le\Delta$, one phase is designed to return $w^+$ such that $F(w^+)\le\Delta/2$ with high probability. The bound $\Delta$ provides fixed upper bounds on curvature and noise during that phase; after contraction, the next phase uses the smaller bound $\Delta/2$. We implement this idea through Restarted Clipped Stochastic Accelerated Gradient (RCSAG). Algorithm~\ref{alg:rcsag} gives the outer restart scheme, and Procedure~\ref{proc:phase} specifies its accelerated clipped phase method.

\begingroup
\setlength{\belowcaptionskip}{1pt}
\begin{algorithm}[H]
\caption{Restarted Clipped Stochastic Accelerated Gradient (RCSAG)}\label{alg:rcsag}
\small
\begin{algorithmic}[1]
\Require $x_0,\Delta_0,\varepsilon,\rho$; phase-set rule $\mathcal Q$; model upper bounds $p,\sigma_i,H_i$.
\State Set $w_0\gets x_0$, $S\gets\lceil\log_2(\Delta_0/\varepsilon)\rceil$.
\For{$s=0,\ldots,S-1$}
  \State Set $\Delta_s\gets2^{-s}\Delta_0$, $\rho_s\gets6\rho/[\pi^2(s+1)^2]$.
  \State Choose $Q_s\gets\mathcal Q(w_s,\Delta_s)$ and set $D_s\gets\diam(Q_s)$.
  \State Update $w_{s+1}\gets\Call{\hyperref[proc:phase]{RCSAG-Phase}}{w_s,\Delta_s,Q_s,D_s,\rho_s}$.
\EndFor
\State \Return $w_S$.
\end{algorithmic}
\end{algorithm}
\endgroup

RCSAG (Algorithm~\ref{alg:rcsag}) does not evaluate the objective gap during the run. It only requires an initial upper bound $F(x_0)\le\Delta_0$; all later bounds are set deterministically as $\Delta_s=2^{-s}\Delta_0$. Appendix~\ref{app:init} shows how to construct a valid $\Delta_0$ with high probability from stochastic gradients alone, without knowing $f^\star$, evaluating function values, or computing an exact gradient.

Algorithm~\ref{alg:rcsag} starts from $w_0=x_0$ and runs $S=\lceil\log_2(\Delta_0/\varepsilon)\rceil$ phases. At phase $s$, $w_s$ is the current point, $\Delta_s=2^{-s}\Delta_0$ is an upper bound on $F(w_s)$, and $\rho_s$ is the allowed failure probability. The rule $\mathcal Q$ chooses an auxiliary convex set $Q_s$ containing $w_s$ and $x^\star$, and $D_s$ is its diameter. In the convex case, the same fixed set $Q$ is used in every phase. Under strong convexity, $Q_s$ is the shrinking ball $B(w_s,\sqrt{2\Delta_s/\mu})$. These sets control the iterates but do not constrain problem~\eqref{eq:problem}. RCSAG-Phase returns the next point $w_{s+1}$ and, with probability at least $1-\rho_s$, halves the current gap bound: $F(w_{s+1})\le\Delta_s/2=\Delta_{s+1}$. Thus each successful phase preserves $F(w_s)\le\Delta_s$. Since $\sum_{s\ge0}\rho_s\le\rho$, the final output satisfies $F(w_S)\le\varepsilon$ with probability at least $1-\rho$.

Algorithm~\ref{alg:rcsag} is related to the multistage AC-SA of \citet{ghadimi2013shrinking}, which uses shrinking domains, and to R-clipped-SSTM of \citet{gorbunov2020heavy}, which changes batch sizes and clipping thresholds across stages to exploit strong convexity. \citet{lobanov2026h01} use the same gap-halving schedule for deterministic $(H_0,H_1)$ acceleration, while \citet{dvinskikh2026localize} restart batched accelerated solves on shrinking balls under norm-sub-Gaussian noise. The outer wrapper is therefore standard. The remaining task is to realize its gap-halving step under $(H_0,H_1)$-smoothness and mixed heavy-tailed noise (see Assumptions~\ref{ass:h01} and~\ref{ass:oracle}). We now describe the phase method that does so.

\subsection{One phase of RCSAG}
We now describe one call to RCSAG-Phase (Procedure~\ref{proc:phase}). It starts from a point $w$ satisfying $F(w)\le\Delta$ and uses an auxiliary convex set $Q$ containing $w$ and $x^\star$, with $D=\diam(Q)<\infty$. Its goal is to return $y_T$ such that $F(y_T)\le\Delta/2$ with probability at least $1-\rho$.

The gap bound $\Delta$ determines phasewise bounds on curvature, gradient norm, and noise. Let $C_\star=19+12\sqrt{2}$ if $H_1>0$ and $C_\star=2$ otherwise; define $\mathcal H_\Delta:=H_0+H_1\Delta$, $G_\Delta:=\sqrt{32C_\star\Delta\mathcal H_\Delta}$, and $\varsigma_{p,\Delta}^p:=\sigma_0^p+\sigma_1^pG_\Delta^p+\sigma_2^p(4\Delta)^{p/2}$. Here $\mathcal H_\Delta$ is the curvature scale for the phase, while $G_\Delta$ and $\varsigma_{p,\Delta}$ bound the gradient and noise scales along the controlled trajectory, as justified below. For $\Lambda>0$, let $\clip_\Lambda(v):=v\min\{1,\Lambda/\|v\|\}$. Using $D$ and $\rho$ supplied by Algorithm~\ref{alg:rcsag}, together with the bounds $G_\Delta$ and $\varsigma_{p,\Delta}$, set $\Lambda:=\max\{2G_\Delta,(64\varsigma_{p,\Delta}^pD/\Delta)^{1/(p-1)}\}$, $R:=D\Lambda/\Delta$, $T:=\lceil1024(1+R)\log(8/\rho)\rceil$, and $a:=5D^2/(\Delta T)$. Only $T$, and hence $a$, depends on $\rho$. Theorem~\ref{thm:phase} justifies these choices, and Procedure~\ref{proc:phase} below gives the resulting updates using one fresh stochastic gradient per iteration.


\begingroup
\setlength{\belowcaptionskip}{1pt}

\begin{procedure}[h]
\caption{\textproc{RCSAG-Phase}}\label{proc:phase}
\small
\begin{algorithmic}[1]
\Require $w,\Delta,Q,D,\rho$; phase parameters $\Lambda,T,a$.
\State Set $y_0=z_0=w$, $A_0\gets D^2/\Delta$, and
       $A_t\gets A_0+ta$ for $t=1,\ldots,T$.
\For{$t=1,\ldots,T$}
    \State Set $u_t\gets(A_{t-1}y_{t-1}+az_{t-1})/A_t$.
    \State Query $g_t\gets g(u_t,\xi_t)$ using one sample.
    \State Clip $\widehat g_t\gets\clip_\Lambda(g_t)$.
    \State Update $z_t\gets\Pi_Q(z_{t-1}-a\widehat g_t)$.
    \State Update $y_t\gets(A_{t-1}y_{t-1}+az_t)/A_t$.
\EndFor
\State \Return $y_T$.
\end{algorithmic}
\end{procedure}

\endgroup

Procedure~\ref{proc:phase} maintains a candidate output $y_t$ and an auxiliary point $z_t\in Q$. At iteration $t$, it combines $y_{t-1}$ and $z_{t-1}$ to form the query point $u_t$, evaluates $g_t=g(u_t,\xi_t)$, and clips it at radius $\Lambda$ to obtain $\widehat g_t$, so that $\|\widehat g_t\|\le\Lambda$. It then takes the auxiliary step $z_{t-1}-a\widehat g_t$, projects it onto $Q$ to obtain $z_t$, and combines $z_t$ with $y_{t-1}$ to update $y_t$. The mixing fraction is $a/A_t$, where $A_t=A_0+ta$; the positive $A_0$ keeps this fraction small during the early iterations and thereby controls the query displacements.

Procedure~\ref{proc:phase} combines two established ideas. The coupling of $y_t$, $z_t$, and $u_t$ follows the classical accelerated construction of \citet{nesterov2004}. Clipped-SSTM uses a related coupling with clipped minibatch gradients under $L$-smoothness and a uniform second-moment bound \citep{gorbunov2020heavy}, while \citet{nguyen2023clipped} analyze clipped accelerated mirror descent under a uniform finite-$p$ noise bound. RCSAG-Phase instead adapts $\Lambda$ and $T$ to the current gap bound $\Delta$, uses one fresh sample per iteration, and combines the positive $A_0$ with projection onto the auxiliary set $Q$ to keep momentum queries within distance $r_H$. The individual ingredients are standard; our contribution is to combine them so that changing curvature, mixed noise, and clipping bias are controlled simultaneously without minibatching. The next subsection establishes safe queries and the bounds needed for the one-phase guarantee.

\subsection{Auxiliary Results for One Phase}
The phase parameters above use $G_\Delta$ and $\varsigma_{p,\Delta}$ as bounds at every query, so we must verify them along Procedure~\ref{proc:phase}. We track the potential $\Phi_t:=A_tF(y_t)+\frac12\|z_t-x^\star\|^2$, which combines the output gap with the auxiliary distance to a solution. Since $F(w)\le\Delta$, $w,x^\star\in Q$, and $A_0=D^2/\Delta$, we have $\Phi_0\le3D^2/2$; also $A_T=A_0+Ta=6D^2/\Delta$. What remains to prove is that $\Phi_t\le3D^2$ throughout the phase with high probability. On that event, $A_TF(y_T)\le\Phi_T\le3D^2$, hence $F(y_T)\le\Delta/2$. The lemmas below provide the needed bounds, and the next subsection completes the argument.

Set $\mathcal H(x):=H_0+H_1F(x)$. The first lemma bounds $\|\nabla f(x)\|$ in terms of $F(x)$ and $r_H$.

\begin{lemma}[Gradient bounds from the objective gap]\label{lem:selfbound}
Suppose Assumptions~\ref{ass:convexity} and~\ref{ass:h01} hold. Let $C_\star$ be the constant defined before Procedure~\ref{proc:phase}, and set $r_H^{-1}=0$ when $H_1=0$. Then, for every $x\in\R^d$,
\begin{equation}\label{eq:selfbounds}
 r_H^{-1}\|\nabla f(x)\|\le C_\star\mathcal H(x),
 \qquad
 \|\nabla f(x)\|^2\le 2C_\star F(x)\mathcal H(x).
\end{equation}
\end{lemma}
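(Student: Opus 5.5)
The plan is to test the local upper model of Assumption~\ref{ass:h01} along the normalized negative gradient and use $f^\star\le f(y)$. Convexity (Assumption~\ref{ass:convexity}) is needed only for differentiability and the existence of $f^\star$. Fix $x$ and write $g:=\nabla f(x)$, $M:=\mathcal H(x)=H_0+H_1F(x)$ and $F:=F(x)$. If $g=0$ both bounds are trivial, so assume $g\neq0$. For any step $h\in(0,r_H]$ (any $h>0$ if $H_1=0$), set $y=x-hg/\|g\|$. Then $\|y-x\|=h\le r_H$, and Assumption~\ref{ass:h01} gives $f^\star\le f(y)\le f(x)-h\|g\|+Mh^2$. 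Hence
\[
 \|g\|\le \frac{F}{h}+Mh\qquad\text{for all admissible }h .
\]
Everything then reduces to choosing $h$. The natural choice is $h=\min\{r_H,\sqrt{F/M}\}$.

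\emph{Case $H_1=0$.} Here $r_H=\infty$ and $C_\star=2$. If $M>0$, take $h=\sqrt{F/M}$ (let $h\to0$ if $F=0$). This gives $\|g\|\le2\sqrt{FM}$, i.e. $\|g\|^2\le4FM=2C_\star FM$. If $M=0$, letting $h\to\infty$ forces $g=0$. The first bound holds trivially because $r_H^{-1}=0$.

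\emph{Case $H_1>0$.} Put $c:=2\sqrt3+\sqrt6$, so $r_H^2=1/(c^2H_1)$. A direct computation gives $c^2=18+12\sqrt2$, hence $c^2+1=C_\star$; this identity is what the constant was designed for. There are two subcases.

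(i) If $\sqrt{F/M}\le r_H$, take $h=\sqrt{F/M}$. Then $\|g\|\le2\sqrt{FM}\le2Mr_H\le C_\star Mr_H$, and $\|g\|^2\le4FM\le2C_\star FM$.

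(ii) If $F>Mr_H^2$, take $h=r_H$, which gives $\|g\|\le F/r_H+Mr_H$. The key inequality is $M\ge H_1F$, which yields
\[
 F/r_H^2=c^2H_1F\le c^2M .
\]
Multiplying by $r_H$ gives $F/r_H\le c^2Mr_H$, so $\|g\|\le(c^2+1)Mr_H=C_\star Mr_H$. For the second bound, square the estimate:
\[
 \|g\|^2\le \frac{F^2}{r_H^2}+2FM+M^2r_H^2\le c^2FM+2FM+FM .
\]
Here the first term uses $F/r_H^2\le c^2M$ and the last uses $Mr_H^2<F$. Thus $\|g\|^2\le(c^2+3)FM\le2(c^2+1)FM=2C_\star FM$, since $c^2\ge1$.

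The only delicate point is subcase (ii). A crude squaring such as $\|g\|\le2F/r_H$ loses a factor and overshoots $2C_\star$. One must instead expand the square and bound each term separately, using $Mr_H^2<F$ and $F\le c^2Mr_H^2$. Once that is done, matching the constant $19+12\sqrt2=c^2+1$ is pure arithmetic.
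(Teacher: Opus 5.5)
Your proof is correct. It reaches the second bound by a different route from the paper's.

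\textbf{How the paper argues.} The paper proves the first bound uniformly with the single full-radius step $h=r_H$. No case split is needed there: your subcase (ii) computation, $\|g\|\le F/r_H+Mr_H\le(c^2+1)Mr_H$, is valid for every $x$. The paper then uses the first bound as a tool. It certifies that the step $x^+=x-\nabla f(x)/(C_\star\mathcal H(x))$ has length at most $r_H$. A standard descent computation, using $\mathcal H(x)/(C_\star\mathcal H(x))\le 1/2$, then gives $\|\nabla f(x)\|^2\le 2C_\star F(x)\mathcal H(x)$ in one line.

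\textbf{How you argue.} You never chain the two bounds. Instead you optimize the one-parameter estimate $\|g\|\le F/h+Mh$ directly, splitting on whether the unconstrained minimizer $h=\sqrt{F/M}$ fits inside the radius.

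\textbf{Comparison.}
\begin{itemize}
\item The paper's route is modular and case-free, and it explains why the constant in the second bound is exactly $2C_\star$.
\item Your route is self-contained. It gives the slightly sharper constant $\max\{4,c^2+3\}=21+12\sqrt2<2C_\star$, and it shows that only the boundary regime forces the large constant.
\item Your interior/boundary split is essentially the mechanism the paper itself uses later for the radius-aware self-bound (Lemma~\ref{lem:radius-aware}).
\end{itemize}

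\textbf{One point to make explicit.} For $H_1>0$ your subcases need $M>0$ so that $\sqrt{F/M}$ is defined. This holds because $g\neq0$ forces $F>0$: a point with $F=0$ is an unconstrained minimizer of a differentiable function, so its gradient vanishes. Then $M\ge H_1F>0$. Note also that the existence of $f^\star$ comes from the problem setup (nonempty $X^\star$), not from convexity.
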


The next lemma turns these gradient bounds into bounds at the query $u_t$, conditional on $\Phi_{t-1}\le3D^2$. Appendix~\ref{app:phaseproof} shows that this condition holds throughout the phase with high probability.

\begin{lemma}[Safe accelerated queries]\label{lem:safe-queries}
Under Assumptions~\ref{ass:convexity} and~\ref{ass:h01}, run Procedure~\ref{proc:phase} with $F(w)\le\Delta$, $w,x^\star\in Q$, and the phase parameters $\Lambda,T,a$ specified above. If $\Phi_{t-1}\le3D^2$ for some $1\le t\le T$:
\begin{equation}\label{eq:safe-main}
 F(y_{t-1})\le3\Delta,\qquad F(u_t)\le4\Delta,\qquad \|\nabla f(u_t)\|\le G_\Delta.
\end{equation}
Moreover, $\|u_t-y_{t-1}\|\le r_H$ and $\|y_t-u_t\|\le r_H$.
\end{lemma}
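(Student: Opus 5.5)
The plan is to read off the gap bound at $y_{t-1}$ from the potential, bound the two mixing displacements directly from the step sizes, and then transfer the gap bound to $u_t$ through the local upper model of Assumption~\ref{ass:h01}. The gradient bound at $u_t$ then follows from Lemma~\ref{lem:selfbound}.

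\textbf{Gap at $y_{t-1}$ and membership in $Q$.} Since $\Phi_{t-1}\le3D^2$ and $A_{t-1}\ge A_0=D^2/\Delta$, we get $A_{t-1}F(y_{t-1})\le3D^2$, hence $F(y_{t-1})\le3\Delta$. Next, by induction every $y_s$ is a convex combination of $w$ and $z_1,\dots,z_s$. Each $z_s$ lies in $Q$ because of the projection, and $w\in Q$. So $y_s\in Q$, and since $u_t$ is a convex combination of $y_{t-1}$ and $z_{t-1}$, also $u_t\in Q$.

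\textbf{Displacements.} The updates give
\[
u_t-y_{t-1}=\tfrac{a}{A_t}(z_{t-1}-y_{t-1}),\qquad y_t-u_t=\tfrac{a}{A_t}(z_t-z_{t-1}).
\]
Both differences on the right lie in $Q$, so each displacement has norm at most
\[
\delta:=\tfrac{a}{A_0}D=\tfrac{5D}{T}.
\]
From $T\ge1024R=1024D\Lambda/\Delta$ and $\Lambda\ge2G_\Delta$ we obtain
\[
\delta\le\frac{5\Delta}{1024\Lambda}\le\frac{5\Delta}{2048\,G_\Delta}.
\]
If $H_1=0$ then $r_H=\infty$ and nothing needs checking. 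If $H_1>0$, use $G_\Delta^2=32C_\star\Delta\mathcal H_\Delta\ge32C_\star H_1\Delta^2$, which gives $\Delta/G_\Delta\le(32C_\star H_1)^{-1/2}$. Hence
\[
\delta\le\frac{5}{2048\sqrt{32C_\star}}\cdot\frac{1}{\sqrt{H_1}},
\]
and this is far below $r_H=((2\sqrt3+\sqrt6)\sqrt{H_1})^{-1}$ after comparing absolute constants. This proves both displacement claims.

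\textbf{Gap and gradient at $u_t$.} Since $\|u_t-y_{t-1}\|\le r_H$, Assumption~\ref{ass:h01} applies with $x=y_{t-1}$ and $y=u_t$:
\[
F(u_t)\le F(y_{t-1})+\|\nabla f(y_{t-1})\|\,\delta+\mathcal H(y_{t-1})\,\delta^2 .
\]
We have $\mathcal H(y_{t-1})\le H_0+3H_1\Delta\le3\mathcal H_\Delta$. Lemma~\ref{lem:selfbound} then gives
\[
\|\nabla f(y_{t-1})\|^2\le2C_\star\cdot3\Delta\cdot3\mathcal H_\Delta\le G_\Delta^2 .
\]
The linear term is therefore at most $G_\Delta\delta\le5\Delta/2048$. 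The quadratic term is at most
\[
3\mathcal H_\Delta\cdot\frac{25\Delta^2}{2048^2G_\Delta^2}=\frac{75\,\Delta}{2048^2\cdot32\,C_\star},
\]
which is negligible. So $F(u_t)\le3\Delta+\Delta/100\le4\Delta$.

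Applying Lemma~\ref{lem:selfbound} once more at $u_t$, with $\mathcal H(u_t)\le H_0+4H_1\Delta\le4\mathcal H_\Delta$, yields
\[
\|\nabla f(u_t)\|^2\le2C_\star\cdot4\Delta\cdot4\mathcal H_\Delta=G_\Delta^2 ,
\]
which completes \eqref{eq:safe-main}.

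\textbf{Main obstacle.} The only delicate point is the order of the argument. The upper model may be invoked only after the displacement bound $\delta\le r_H$ has been established. That bound must come purely from the parameter choices $A_0$, $a$, $T$ and $\Lambda\ge2G_\Delta$, together with $G_\Delta\gtrsim\sqrt{H_1}\Delta$, and not from any property of $u_t$ itself. Once that is settled, the remaining steps are constant bookkeeping, which the generous factor $1024$ in $T$ easily absorbs.
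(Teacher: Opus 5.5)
Your proof is correct and follows essentially the same route as the paper's: you read the gap at $y_{t-1}$ off the potential, bound both displacements by $(a/A_0)D$ using $\diam(Q)=D$, check the radius through $\mathcal H_\Delta\ge H_1\Delta$, and then apply the local upper model together with Lemma~\ref{lem:selfbound} at $y_{t-1}$ and at $u_t$. The only difference is bookkeeping: the paper packages the parameter choice as the weight condition $Ma^2\le A_0$ with $M=32C_\star\mathcal H_\Delta$ (this needs only $T\ge5D\sqrt{M/\Delta}$ and is reused for the quadratic coefficient in Lemma~\ref{lem:one-step}), giving displacements at most $\sqrt{\Delta/M}$ and $F(u_t)\le(3+\frac34+\frac{3}{32C_\star})\Delta$, whereas you spend the full factor $1024$ in $T$ to make the increment negligible.
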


Lemmas~\ref{lem:selfbound} and~\ref{lem:safe-queries} justify $G_\Delta$ and validate both local-model steps; the key bounds are \eqref{eq:selfbounds} and \eqref{eq:safe-main}. Assumption~\ref{ass:oracle} then gives $\E_t\|g(u_t,\xi_t)-\nabla f(u_t)\|^p\le\varsigma_{p,\Delta}^p$. Clipping bounds individual samples but may introduce bias, so we write $\widehat g_t-\nabla f(u_t)=\zeta_t+\beta_t$, where $\zeta_t:=\widehat g_t-\E_t[\widehat g_t]$ is centered and $\beta_t:=\E_t[\widehat g_t]-\nabla f(u_t)$ is the clipping bias.

\begin{lemma}[Bias and noise after clipping]\label{lem:clipping}
Suppose Assumption~\ref{ass:oracle} and the conditions of Lemma~\ref{lem:safe-queries} hold at some $t\in\{1,\ldots,T\}$. Then, for $\Lambda\ge2G_\Delta$,
\begin{equation}\label{eq:clip-summary}
 \|\zeta_t\|\le2\Lambda\quad\text{a.s.},\qquad
 \|\beta_t\|\le2\varsigma_{p,\Delta}^p\Lambda^{1-p},\qquad
 \E_t\|\zeta_t\|^2\le2\varsigma_{p,\Delta}^p\Lambda^{2-p}.
\end{equation}
\end{lemma}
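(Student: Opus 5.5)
Our plan is to follow the standard clipping analysis of \citet{gorbunov2020heavy} and \citet{nguyen2023clipped}, applied at the query $u_t$, using the phasewise bounds already established. Write $\nabla:=\nabla f(u_t)$, $g:=g_t$, and $\varsigma:=\varsigma_{p,\Delta}$. By Lemma~\ref{lem:safe-queries}, $F(u_t)\le4\Delta$ and $\|\nabla\|\le G_\Delta\le\Lambda/2$. Assumption~\ref{ass:oracle} and the definition of $\varsigma_{p,\Delta}$ then give $\E_t\|g-\nabla\|^p\le\varsigma^p$. Since $u_t$ is $\mathcal F_{t-1}$-measurable, all conditional expectations are taken with $u_t$ fixed.

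\emph{Almost-sure bound.} We have $\|\widehat g_t\|\le\Lambda$, and by Jensen $\|\E_t\widehat g_t\|\le\Lambda$. Hence $\|\zeta_t\|\le2\Lambda$.

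\emph{Bias.} Let $\chi:=\mathbf 1\{\|g\|>\Lambda\}$. Then $\widehat g_t-g=\chi\,(\Lambda/\|g\|-1)g$, so $\|\widehat g_t-g\|=\chi(\|g\|-\Lambda)\le\chi\|g-\nabla\|$ because $\|\nabla\|\le\Lambda$. Unbiasedness gives $\beta_t=\E_t[\widehat g_t-g]$. On the event $\chi=1$ we have $\|g-\nabla\|\ge\|g\|-\|\nabla\|>\Lambda/2$. Therefore
\[
\|\beta_t\|\le\E_t\bigl[\|g-\nabla\|\chi\bigr]\le\E_t\|g-\nabla\|^p\,(\Lambda/2)^{1-p}\le2^{p-1}\varsigma^p\Lambda^{1-p}\le2\varsigma^p\Lambda^{1-p},
\]
where the second inequality uses $\|g-\nabla\|^{1-p}\le(\Lambda/2)^{1-p}$ on that event.

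\emph{Variance.} The conditional mean minimizes the conditional mean-square error, so $\E_t\|\zeta_t\|^2\le\E_t\|\widehat g_t-\nabla\|^2$. Clipping is a projection onto the ball of radius $\Lambda$, and $\nabla$ lies in that ball, so nonexpansiveness gives $\|\widehat g_t-\nabla\|\le\|g-\nabla\|$. Together with $\|\widehat g_t-\nabla\|\le\|\widehat g_t\|+\|\nabla\|\le\tfrac32\Lambda$, this yields $\|\widehat g_t-\nabla\|\le\min\{\|g-\nabla\|,\tfrac32\Lambda\}$. Interpolating,
\[
\|\widehat g_t-\nabla\|^2\le\|g-\nabla\|^p\,(\tfrac32\Lambda)^{2-p},
\]
so $\E_t\|\zeta_t\|^2\le(3/2)^{2-p}\varsigma^p\Lambda^{2-p}\le2\varsigma^p\Lambda^{2-p}$.

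\emph{Main obstacle.} The proof has no deep obstacle; the only care needed is in the constants. The bias step requires the lower bound $\|g-\nabla\|>\Lambda/2$ on the clipping event, which is exactly where $\Lambda\ge2G_\Delta$ is used. The variance step requires the projection (nonexpansiveness) argument, which avoids producing an extra factor~$4$. We would also note that $(3/2)^{2-p}\le 3/2\le2$ and $2^{p-1}\le2$ for $1<p\le2$, which confirms the stated constants.
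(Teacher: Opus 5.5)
Your proposal is correct and follows the paper's proof essentially step for step: you condition on $\mathcal F_{t-1}$, use the ball/Jensen argument for $\|\zeta_t\|\le2\Lambda$, bound the bias with the tail estimate $s\mathbf 1\{s>\Lambda/2\}\le(\Lambda/2)^{1-p}s^p$, and control the second moment through MSE-minimization of the conditional mean plus nonexpansiveness-based interpolation. The only cosmetic differences are that you compute $\|\widehat g_t-g_t\|=\chi(\|g_t\|-\Lambda)$ explicitly where the paper invokes projection optimality, and that you use the slightly sharper envelope $\tfrac32\Lambda$ in place of $2\Lambda$, which gives the constant $(3/2)^{2-p}$ rather than $2^{2-p}$.
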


The condition $\Lambda\ge2G_\Delta$ in Lemma~\ref{lem:clipping} motivates the $2G_\Delta$ term in our choice of clipping radius (given before Procedure~\ref{proc:phase}). We use the bounds in \eqref{eq:clip-summary} in the potential recursion below.

\begin{lemma}[One-step progress]\label{lem:one-step}
Under the conditions of Lemma~\ref{lem:safe-queries} at an iteration $t\in\{1,\ldots,T\}$, the following bound holds for every realization of the clipped gradient $\widehat g_t$:
\begin{equation}\label{eq:pathwise}
 \Phi_t\le \Phi_{t-1}+a^2\|\zeta_t+\beta_t\|^2-a\ip{\zeta_t+\beta_t}{z_{t-1}-x^\star}.
\end{equation}
\end{lemma}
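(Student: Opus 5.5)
The plan is to run the classical linear-coupling (estimate-sequence) argument for one step. The upper model of Assumption~\ref{ass:h01} is applied at the query $u_t$, convexity at $u_t$, and the three-point inequality for the projection onto $Q$. The stochastic error $e_t:=\widehat g_t-\nabla f(u_t)=\zeta_t+\beta_t$ is kept pathwise throughout. Everything is deterministic given the realization, so no expectation is taken.

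\emph{Step 1 (geometry of the coupling).} From the definitions of $u_t$ and $y_t$ we get $y_t-u_t=\frac{a}{A_t}(z_t-z_{t-1})$. By Lemma~\ref{lem:safe-queries}, $\|y_t-u_t\|\le r_H$ and $F(u_t)\le4\Delta$, so $\mathcal H(u_t)\le H_0+4H_1\Delta\le4\mathcal H_\Delta$. Hence Assumption~\ref{ass:h01} gives
\[
 F(y_t)\le F(u_t)+\ip{\nabla f(u_t)}{y_t-u_t}+\frac{4\mathcal H_\Delta a^2}{A_t^2}\|z_t-z_{t-1}\|^2 .
\]
Multiply this by $A_t$. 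Then split $A_tF(u_t)=A_{t-1}F(u_t)+aF(u_t)$ and bound each part by convexity (Assumption~\ref{ass:convexity} with $\mu\ge0$):
\[
 A_{t-1}F(u_t)\le A_{t-1}F(y_{t-1})+A_{t-1}\ip{\nabla f(u_t)}{u_t-y_{t-1}},\qquad
 aF(u_t)\le a\ip{\nabla f(u_t)}{u_t-x^\star}.
\]
The linear terms collect, using $A_tu_t=A_{t-1}y_{t-1}+az_{t-1}$, into exactly $a\ip{\nabla f(u_t)}{z_t-x^\star}$. This yields
\[
 A_tF(y_t)\le A_{t-1}F(y_{t-1})+a\ip{\nabla f(u_t)}{z_t-x^\star}+\frac{4\mathcal H_\Delta a^2}{A_t}\|z_t-z_{t-1}\|^2 .
\]

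\emph{Step 2 (step-size check).} We need $4\mathcal H_\Delta a^2/A_t\le\frac14$. Since $A_t\ge A_0=D^2/\Delta$, it suffices to show $a^2\le D^2/(16\Delta\mathcal H_\Delta)$. This follows from $T\ge1024R=1024D\Lambda/\Delta\ge2048DG_\Delta/\Delta$ together with $G_\Delta^2=32C_\star\Delta\mathcal H_\Delta$. Indeed,
\[
 a^2=\frac{25D^4}{\Delta^2T^2}\le\frac{25D^2}{2048^2\cdot32C_\star\Delta\mathcal H_\Delta},
\]
which is far below the requirement. This is the one place where the parameter choices enter. I expect it to be the main bookkeeping obstacle, though it is routine once written down.

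\emph{Step 3 (projection and error).} Write $\nabla f(u_t)=\widehat g_t-e_t$. Because $x^\star\in Q$ and $z_t=\Pi_Q(z_{t-1}-a\widehat g_t)$, the standard projection inequality gives
\[
 a\ip{\widehat g_t}{z_t-x^\star}\le\tfrac12\|z_{t-1}-x^\star\|^2-\tfrac12\|z_t-x^\star\|^2-\tfrac12\|z_t-z_{t-1}\|^2 .
\]
For the error term, split $z_t-x^\star=(z_{t-1}-x^\star)+(z_t-z_{t-1})$ and apply Young's inequality to the second piece:
\[
 -a\ip{e_t}{z_t-x^\star}\le -a\ip{e_t}{z_{t-1}-x^\star}+a^2\|e_t\|^2+\tfrac14\|z_t-z_{t-1}\|^2 .
\]
Adding this to the bound from Step 1, the coefficients of $\|z_t-z_{t-1}\|^2$ are $-\frac12+\frac14+\frac14=0$. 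Rearranging into $\Phi_t$ and $\Phi_{t-1}$ gives exactly \eqref{eq:pathwise}, for every realization of $\widehat g_t$.
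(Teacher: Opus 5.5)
Your proposal is correct and follows the paper's proof essentially step for step: the local upper model from $u_t$ to $y_t$, the convex coupling collapsing via $A_tu_t=A_{t-1}y_{t-1}+az_{t-1}$, the projection three-point inequality, and Young's inequality with constants $1$ and $1/4$. The only difference is bookkeeping. You verify a quadratic coefficient of at most $1/4$ directly from $T\ge1024R$ and $R\ge2DG_\Delta/\Delta$, which leaves zero slack. The paper instead reuses the weight control $Ma^2\le A_t$ and $\mathcal H(u_t)/M\le1/16$ from Lemma~\ref{lem:safe-queries}, obtaining $1/16$ and a leftover coefficient of $-3/16$.
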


Together, Lemmas~\ref{lem:selfbound}--\ref{lem:one-step} (proved in Appendices~\ref{app:geometry} and~\ref{app:phaseproof}) provide the query, clipping, and potential bounds needed for the phase proof. In particular, \eqref{eq:pathwise} shows what the parameters defined before Procedure~\ref{proc:phase} must control: $\Lambda$ limits clipping bias, while $T$ and the resulting $a$ control the total effect of centered noise over the phase. Appendix~\ref{app:phaseproof} shows that these choices keep $\Phi_t\le3D^2$ with high probability. Since $A_T=6D^2/\Delta$, this yields $F(y_T)\le\Delta/2$, the guarantee stated next.

\subsection{One-phase guarantee}
The preceding lemmas identify what must be controlled within a phase. We now state the resulting contraction guarantee and the number of stochastic-gradient samples it requires.

\begin{theorem}[One-phase contraction]\label{thm:phase}
Under Assumptions~\ref{ass:convexity}, \ref{ass:h01}, and~\ref{ass:oracle}, run Procedure~\ref{proc:phase} with $F(w)\le\Delta$, $w,x^\star\in Q$, and the phase parameters $\Lambda,T,a$ specified above. Then
\[
 \Prb\{F(y_T)\le\Delta/2\}\ge1-\rho.
\]
\end{theorem}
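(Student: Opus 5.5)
We argue by induction over iterations, using a stopping-time construction to make the conditional lemmas usable. The paper already reduces the claim to showing that $\Phi_t\le3D^2$ for all $t\le T$ with probability at least $1-\rho$: on that event $A_TF(y_T)\le\Phi_T\le3D^2$ and $A_T=6D^2/\Delta$, hence $F(y_T)\le\Delta/2$. Let $\tau$ be the first $t$ with $\Phi_t>3D^2$, and introduce the indicators $\chi_t:=\1\{\Phi_{t-1}\le3D^2\}$, which are $\mathcal F_{t-1}$-measurable. On $\{\chi_t=1\}$ we may apply Lemmas~\ref{lem:safe-queries}, \ref{lem:clipping} and~\ref{lem:one-step}. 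Telescoping \eqref{eq:pathwise} up to any $t\le\tau$ then gives
\[
 \Phi_t\le \tfrac32D^2+\sum_{k\le t}\chi_k\Bigl(2a^2\|\zeta_k\|^2+2a^2\|\beta_k\|^2-a\ip{\zeta_k}{z_{k-1}-x^\star}-a\ip{\beta_k}{z_{k-1}-x^\star}\Bigr).
\]
It therefore suffices to show that, with probability at least $1-\rho$, every partial sum of the bracketed terms is at most $\tfrac32D^2$. We will allot $\tfrac38D^2$ to each of four pieces.

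\textbf{Deterministic pieces.} Since $z_{k-1}$ and $x^\star$ both lie in $Q$, we have $\|z_{k-1}-x^\star\|\le D$. Lemma~\ref{lem:clipping} then bounds the bias contribution by
\[
 \sum aD\|\beta_k\|\le 2TaD\varsigma_{p,\Delta}^p\Lambda^{1-p}=10D^3\varsigma_{p,\Delta}^p\Lambda^{1-p}/\Delta .
\]
The choice $\Lambda^{p-1}\ge64\varsigma_{p,\Delta}^pD/\Delta$ makes this at most $\tfrac{10}{64}D^2$. The squared-bias term is smaller still, since $a\|\beta_k\|\le D$ eventually. For the second-moment term, split $\|\zeta_k\|^2=\E_k\|\zeta_k\|^2+(\|\zeta_k\|^2-\E_k\|\zeta_k\|^2)$. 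The predictable part is at most
\[
 2Ta^2\cdot2\varsigma_{p,\Delta}^p\Lambda^{2-p}=\frac{100D^4\varsigma_{p,\Delta}^p\Lambda^{2-p}}{\Delta^2T}.
\]
Using $\varsigma_{p,\Delta}^p\le\Delta\Lambda^{p-1}/(64D)$, this becomes $\lesssim D^3\Lambda/(\Delta T)=D^2R/T\le D^2/1024$.

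\textbf{Martingale pieces.} Two martingale difference sequences remain: $X_k:=-a\chi_k\ip{\zeta_k}{z_{k-1}-x^\star}$ and $Y_k:=2a^2\chi_k(\|\zeta_k\|^2-\E_k\|\zeta_k\|^2)$. Both are bounded, with $|X_k|\le2aD\Lambda=c$ and $|Y_k|\le8a^2\Lambda^2$. Their conditional variances satisfy $\sum\E_kX_k^2\le a^2D^2\sum\E_k\|\zeta_k\|^2$, and a similar bound holds for $Y_k$. We apply Freedman's (Bernstein's) inequality with a maximal form, uniformly over $t\le T$, each with failure probability $\rho/2$. The key scaling is $c=2aD\Lambda=10D^3\Lambda/(\Delta T)=10D^2R/T$. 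Because $T\ge1024(1+R)\log(8/\rho)$, we get $c\log(8/\rho)\le D^2/100$. The variance proxy is at most $a^2D^2\cdot T\cdot 4\Lambda^2$ (crudely), which gives $\sqrt{\text{var}\cdot\log}\lesssim D^2R/\sqrt{T}\cdot\sqrt{\log}$. This is $\le D^2/30$ because $T\ge1024R\log(8/\rho)$ and $R^2/T\le R/1024$. The $Y_k$ sum is handled the same way, and its bounds carry an extra factor $a\Lambda\le D\cdot 5R/T$. Summing the four pieces gives at most $\tfrac32D^2$, so on the good event induction keeps $\Phi_t\le3D^2$ for all $t$, i.e.\ $\tau>T$.

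\textbf{Main obstacle.} The step we expect to be most delicate is the interplay between the stopping time and the concentration argument. The lemmas hold only on $\{\Phi_{t-1}\le3D^2\}$, so the martingales must be built with the indicators $\chi_k$; only then are they genuine bounded martingale differences for all $k$. One must then check that, on the high-probability event, the truncated sums coincide with the true ones up to $\tau$. A secondary concern is confirming that the crude variance bound $\E_k\|\zeta_k\|^2\le4\Lambda^2$ does not break the $\log(8/\rho)$-only dependence. It does not: since $R$ enters $c$ linearly, $T\propto R$ suffices. This is exactly why $T$ scales as $(1+R)\log(8/\rho)$ rather than as $R^2$.
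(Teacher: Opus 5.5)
Your overall architecture is the paper's: a stopping time with a predictable indicator, the split $\|\zeta_k+\beta_k\|^2\le2\|\zeta_k\|^2+2\|\beta_k\|^2$, deterministic bias and predictable-variance pieces, and two martingale channels controlled by a maximal Freedman inequality. Your indicator $\1\{\Phi_{t-1}\le3D^2\}$ works as well as the paper's $\1\{t-1<\tau\}$, since the lemmas only require $\Phi_{t-1}\le3D^2$. Your bias, predictable-variance and increment computations are correct.

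The step that fails is the variance proxy for the linear channel. With the crude bound $\E_k\|\zeta_k\|^2\le4\Lambda^2$ you get
\[
 \sum_{k\le T}\E_kX_k^2\le4a^2\Lambda^2D^2T=\frac{100D^4R^2}{T},
 \qquad
 \sqrt{V\log(8/\rho)}=10D^2R\sqrt{\frac{\log(8/\rho)}{T}}.
\]
The horizon $T\ge1024(1+R)\log(8/\rho)$ only gives $R^2\log(8/\rho)/T\le R/1024$, hence $\sqrt{V\log(8/\rho)}\le10D^2\sqrt{R/1024}$. This is not $O(D^2)$ uniformly in $R$. Large $R$ is the typical regime: already $R\ge2D\sqrt{M/\Delta}$, which is large in late phases, and $R$ also grows with the noise.

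The quadratic channel has the same defect: the crude bound gives predictable variation of order $D^4R^4/T^3=D^4R\,(R/T)^3$. Your closing remark therefore has the logic reversed. An increment bound linear in $R$ controls only the $c\log(8/\rho)$ part of Freedman's bound. With a crude variance, the $\sqrt{V\log}$ part would force $T\gtrsim R^2\log(1/\rho)$. Since $R^2\ge4D^2M/\Delta$, that would cost order $D^2\mathcal H_\Delta/\Delta$ per phase (no acceleration) and would square the noise term.

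The fix uses an ingredient you already invoked for the predictable part. The third bound of Lemma~\ref{lem:clipping}, together with $\Lambda^{p-1}\ge64\varsigma_{p,\Delta}^pD/\Delta$, gives $\E_k\|\zeta_k\|^2\le2\varsigma_{p,\Delta}^p\Lambda^{2-p}\le\Delta\Lambda/(32D)$, which is linear in $\Lambda$. Then
\[
 \sum_{k\le T}\E_kX_k^2\le a^2D^2T\,\frac{\Delta\Lambda}{32D}=\frac{25}{32}\,\frac{D^4R}{T}\le\frac{25\,D^4}{32\cdot1024\,\log(8/\rho)},
\]
so $\sqrt{V\log(8/\rho)}\le D^2\sqrt{25/(32\cdot1024)}$. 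For the quadratic channel, $\E_kY_k^2\le4a^4\E_k\|\zeta_k\|^4\le16a^4\Lambda^2\E_k\|\zeta_k\|^2$ gives $\sum_k\E_kY_k^2\le\frac{625}{2}D^4R^3/T^3$. This is also tiny because $R/T\le1/(1024\log(8/\rho))$.

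With these variance bounds both channels fit comfortably inside your $\tfrac38D^2$ allotments. This is exactly how the paper closes the argument, with Freedman applied at level $u=D^2/8$. The point is that $T\propto R$ suffices because the clipped heavy-tailed noise has truncated second moment of order $\varsigma_{p,\Delta}^p\Lambda^{2-p}$, not $\Lambda^2$.
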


\paragraph{Proof sketch.}
Let $\tau$ be the first $t\le T$ with $\Phi_t>3D^2$ (or $T+1$ if none). Up to this time, Lemmas~\ref{lem:selfbound} and~\ref{lem:safe-queries} validate the local model, Assumption~\ref{ass:oracle} and Lemma~\ref{lem:clipping} control noise and clipping bias, and Lemma~\ref{lem:one-step} gives the potential recursion. With the chosen parameters, maximal Freedman inequalities imply that, on an event of probability at least $1-\rho$, $\Phi_\tau<3D^2$ whenever $\tau\le T$, contradicting the definition of $\tau$ \citep{freedman1975}. Hence $\Prb\{\tau\le T\}\le\rho$; on the complementary event, $\Phi_T\le3D^2$ and $A_T=6D^2/\Delta$ gives $F(y_T)\le\Delta/2$. The full proof is in Appendix~\ref{app:phaseproof}.

\section{Main Convergence Results}\label{sec:main-results}
Combining Theorem~\ref{thm:phase} with restarts gives an end-to-end guarantee for every prescribed $\varepsilon\in(0,\Delta_0]$ and $\rho\in(0,1)$: RCSAG reaches $F(w_S)\le\varepsilon$ with probability at least $1-\rho$. Clipping bias imposes no fixed accuracy floor; persistent noise increases the required number of oracle calls instead. Below, $\widetilde O_{\rho,p}(\cdot)$ suppresses constants depending on $p$ and logarithms in confidence and phase indices, but not polynomial dependence on the displayed problem parameters.
\subsection{Convex case}\label{sec:convex-case}
Fix a closed convex set $Q$ containing $x_0$ and $x^\star$, let $D:=\diam(Q)\in(0,\infty)$, and set $S:=\lceil\log_2(\Delta_0/\varepsilon)\rceil$. For $H_0,H_1>0$, define $J_H:=\min\{S,\lceil\log_2^+(H_1\Delta_0/H_0)\rceil\}$, where $\log_2^+(x):=\max\{0,\log_2x\}$; at the endpoints, set $J_H=0$ if $H_1=0$ and $J_H=S$ if $H_0=0<H_1$. Thus $J_H$ counts the phases with $H_1\Delta_s>H_0$: before the crossover, the $H_0$ term $D\sqrt{H_0/\varepsilon}$ is absorbed by the $H_1$ contribution up to a constant; afterward, $J_H$ is fixed and $H_0$ governs the later phases.

\begin{samepage}
\begin{theorem}[Convex]\label{thm:convex}
Suppose Assumptions~\ref{ass:convexity}, \ref{ass:h01}, and~\ref{ass:oracle} hold and $F(x_0)\le\Delta_0$. Run Algorithm~\ref{alg:rcsag} with $Q_s=Q$, target $\varepsilon\in(0,\Delta_0]$, and confidence $\rho\in(0,1)$. Then, with probability at least $1-\rho$, the algorithm returns $w_S$ satisfying $F(w_S)\le\varepsilon$ after $N_{\rm cvx}$ stochastic-gradient calls with $q=p/(p-1)$:
\begin{align*}
 \widetilde O_{\rho,p}\Bigg[&
 S+D\left(\sqrt{\frac{H_0}{\varepsilon}}+\sqrt{H_1}\,J_H\right)
 +\left(\frac{\sigma_0D}{\varepsilon}\right)^q+\left(\sigma_1D\left[
 \sqrt{\frac{H_0}{\varepsilon}}+\sqrt{H_1}\,J_H^{\frac{1}{q}}
 \right]\right)^q
 +\left(\frac{\sigma_2D}{\sqrt\varepsilon}\right)^q
 \Bigg].
\end{align*}
\end{theorem}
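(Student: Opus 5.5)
The proof has two parts. Correctness follows from Theorem~\ref{thm:phase} applied phase by phase with a union bound. Complexity is obtained by summing the per-phase sample counts $T_s$ and bounding each resulting series.

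\emph{Correctness.} Let $E_s$ be the event that phase $s$ halves the gap bound. We argue by induction. On $E_0\cap\dots\cap E_{s-1}$ we have $F(w_s)\le\Delta_s$. Since $Q_s=Q$, the set contains $x^\star$, and it contains $w_s$ because every $y_t$ is a convex combination of points of $Q$: indeed $y_0=w\in Q$, $z_t\in Q$, and $Q$ is convex. Hence the hypotheses of Theorem~\ref{thm:phase} hold, conditionally on $\mathcal F$ at the start of phase $s$, and give $\Prb(E_s^c\mid\cdot)\le\rho_s$. Since $\sum_s\rho_s\le\rho$, all phases succeed with probability at least $1-\rho$. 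On that event $F(w_S)\le\Delta_S=2^{-S}\Delta_0\le\varepsilon$. The number of calls is deterministic and equals $N_{\rm cvx}=\sum_{s<S}T_s$, where $T_s=\lceil1024(1+R_s)\log(8/\rho_s)\rceil$.

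\emph{Per-phase cost.} Each logarithmic factor satisfies $\log(8/\rho_s)=O(\log(1/\rho)+\log(s+1))$, so it is absorbed by $\widetilde O_{\rho,p}$. The "$1$" terms sum to $S$. It remains to bound $R_s=D\Lambda_s/\Delta_s$. Using $\Lambda_s\le 2G_{\Delta_s}+(64\varsigma^p_{p,\Delta_s}D/\Delta_s)^{1/(p-1)}$, we get
\[
R_s\lesssim \frac{D\sqrt{\Delta_s\mathcal H_{\Delta_s}}}{\Delta_s}+D^{q}\Delta_s^{-q}\varsigma_{p,\Delta_s}^{q}.
\]
The second term uses $1+1/(p-1)=q$. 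We then bound $\varsigma_{p,\Delta}^q\lesssim_p \sigma_0^q+\sigma_1^qG_\Delta^q+\sigma_2^q\Delta^{q/2}$, using $(a+b+c)^{1/p}\le a^{1/p}+b^{1/p}+c^{1/p}$. This leaves four geometric-type series in $\Delta_s=2^{-s}\Delta_0$.
\begin{itemize}
\item \emph{Geometric term.} $D\sqrt{\mathcal H_{\Delta_s}/\Delta_s}\le D\sqrt{H_0/\Delta_s}+D\sqrt{H_1}$. The first part sums geometrically to $O(D\sqrt{H_0/\Delta_{S-1}})=O(D\sqrt{H_0/\varepsilon})$, since $\Delta_{S-1}\ge\varepsilon$.
\item \emph{$\sigma_0$ term.} $(\sigma_0D/\Delta_s)^q$ sums geometrically to $O((\sigma_0D/\varepsilon)^q)$.
\item \emph{$\sigma_2$ term.} $(\sigma_2D)^q\Delta_s^{-q/2}$ sums to $O((\sigma_2D/\sqrt\varepsilon)^q)$.
\item \emph{$\sigma_1$ term.} $(\sigma_1D)^qG_{\Delta_s}^q\Delta_s^{-q}\asymp(\sigma_1D)^q(\mathcal H_{\Delta_s}/\Delta_s)^{q/2}$.
\end{itemize}

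\emph{The $H_1$ parts.} The main obstacle is the bookkeeping that replaces $\sqrt{H_1}S$ by $\sqrt{H_1}J_H$. We split the phases at $J_H$.

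For $s<J_H$ we have $H_1\Delta_s>H_0$, so $\mathcal H_{\Delta_s}\le2H_1\Delta_s$. Each such phase then costs $O(D\sqrt{H_1})$ in the geometric term and $O((\sigma_1D\sqrt{H_1})^q)$ in the $\sigma_1$ term. Summed over these phases, the geometric term gives $D\sqrt{H_1}J_H$ and the $\sigma_1$ term gives $(\sigma_1D\sqrt{H_1})^qJ_H=(\sigma_1D\sqrt{H_1}J_H^{1/q})^q$.

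For $s\ge J_H$ we have $\mathcal H_{\Delta_s}\le2H_0$ whenever $J_H<S$. The per-phase costs become $D\sqrt{H_0/\Delta_s}$ and $(\sigma_1D\sqrt{H_0/\Delta_s})^q$. These increase geometrically in $s$, so their sums are dominated by the last phase, giving $O(D\sqrt{H_0/\varepsilon})$ and $O((\sigma_1D\sqrt{H_0/\varepsilon})^q)$.

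The endpoint cases need separate checks. If $H_1=0$, then $J_H=0$ and only the $H_0$ regime occurs. If $H_0=0$, then $J_H=S$ and only the $H_1$ regime occurs. One also has to confirm that the definition of $J_H$ via $\lceil\log_2^+(H_1\Delta_0/H_0)\rceil$ matches the threshold $H_1\Delta_s\gtrless H_0$ up to one phase, which changes only constants.

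Finally, using $(x+y)^q\lesssim_p x^q+y^q$ to combine the two regimes yields the displayed $\sigma_1$ bracket, and adding all contributions gives the claimed bound on $N_{\rm cvx}$.
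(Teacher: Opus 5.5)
Your proposal is correct and follows the paper's proof: a union bound over phases with the allocation $\rho_s$, the per-phase bound on $T_s$ through $R_s=D\Lambda_s/\Delta_s$, and geometric sums for the $\sigma_0$ and $\sigma_2$ terms. The same split of phases at the crossover $H_1\Delta_s$ versus $H_0$ handles the curvature and $\sigma_1$ terms; the paper packages this split as Lemma~\ref{lem:crossover} and applies it with $r=1/2$ and $r=q/2$. Two small points: merging the two $\sigma_1$ regimes into the displayed bracket uses $x^q+y^q\le(x+y)^q$ for $q\ge1$, not the reverse inequality you cite, and your split at $J_H$ is exact rather than off by one phase.
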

\end{samepage}
The bound in Theorem~\ref{thm:convex} can be read term by term. The term $S$ comes from the constant part of the iteration budget in each restart phase. The deterministic term $D(\sqrt{H_0/\varepsilon}+\sqrt{H_1}J_H)$ preserves accelerated square-root dependence on both curvature scales \citep{lobanov2026h01}: the $H_0$ part is optimal up to logarithms \citep{nesterov2004}, while $J_H\le S$ improves the previous $H_1$ dependence after the curvature crossover. The term $(\sigma_0D/\varepsilon)^q$ recovers the uniform finite-$p$ rate \citep{gorbunov2020heavy,nguyen2023clipped} and matches lower bounds up to logarithms (Appendix~\ref{app:lower-additive}). For gradient-dependent noise, \citet{gupta2024agnes} establish acceleration in expectation under classical smoothness for $p=2$, whereas our high-probability bound covers mixed noise for all $1<p\le2$; the optimal $\sigma_1$ dependence in this broader setting remains open. Finally, $(\sigma_2D/\sqrt\varepsilon)^q$ captures noise that decreases with the objective gap, extending the expected-smoothness/interpolation scaling \citep{gower2019sgd}, and matches lower bounds up to logarithms (Appendix~\ref{app:lower-gap}). Hence, when $\sigma_0=0$, the worst stochastic accuracy dependence improves from $\varepsilon^{-q}$ to $\varepsilon^{-q/2}$. The proof is given in Appendix~\ref{app:restart}.
\subsection{Strongly convex case}\label{sec:strongly-convex}
When $\mu>0$ (see Assumption~\ref{ass:convexity}), $F(w_s)\le\Delta_s$ implies $x^\star\in Q_s=B(w_s,\sqrt{2\Delta_s/\mu})$. Thus the auxiliary ball has diameter $D_s=2\sqrt{2\Delta_s/\mu}$ and shrinks with the gap bound. Set $S:=\lceil\log_2(\Delta_0/\varepsilon)\rceil$.

\begin{samepage}
\begin{theorem}[Strongly convex]\label{thm:strong}
Suppose Assumptions~\ref{ass:convexity}, \ref{ass:h01}, and~\ref{ass:oracle} hold with $\mu>0$ and $F(x_0)\le\Delta_0$. Run RCSAG with the balls $Q_s$ above, target $\varepsilon\in(0,\Delta_0]$, and confidence $\rho\in(0,1)$. Then, with probability at least $1-\rho$, it returns $w_S$ satisfying $F(w_S)\le\varepsilon$ after $N_{\rm sc}$ stochastic-gradient calls:
\[
\widetilde O_{\rho,p}\!\Biggl[\Bigl(1+\sqrt{\frac{H_0}{\mu}}\Bigr)S+\sqrt{\frac{H_1\Delta_0}{\mu}}+\Biggl(\frac{\sigma_0}{\sqrt{\mu\varepsilon}}\Biggr)^q+\Biggl(\frac{\sigma_1}{\sqrt\mu}\bigl[\sqrt{H_0}\,S^{1/q}+\sqrt{H_1\Delta_0}\bigr]\Biggr)^q+\Biggl(\frac{\sigma_2}{\sqrt\mu}\Biggr)^qS\Biggr].
\]
\end{theorem}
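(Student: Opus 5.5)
We apply Theorem~\ref{thm:phase} phase by phase, take a union bound over phases, and then sum the per-phase budgets $T_s$ with $D_s=2\sqrt{2\Delta_s/\mu}$. \emph{Correctness.} Suppose phase $s$ starts with $F(w_s)\le\Delta_s$. Strong convexity gives $\frac{\mu}{2}\|w_s-x^\star\|^2\le F(w_s)\le\Delta_s$, so $x^\star\in Q_s=B(w_s,\sqrt{2\Delta_s/\mu})$. The ball $Q_s$ is closed, convex, contains $w_s$, and has diameter $D_s$. Theorem~\ref{thm:phase} therefore applies with failure probability $\rho_s$ and gives $F(w_{s+1})\le\Delta_{s+1}$. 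Induction on $s$, together with $\sum_s\rho_s\le\rho$, yields $F(w_S)\le\Delta_0 2^{-S}\le\varepsilon$ with probability at least $1-\rho$. The point is that $D_s^2/\Delta_s=8/\mu$ is phase-independent, which is what makes the restart geometric.

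\emph{Per-phase cost.} The phase uses $T_s=\lceil1024(1+R_s)\log(8/\rho_s)\rceil$ calls, and $\log(8/\rho_s)=O(\log((s+1)/\rho))$ is absorbed by $\widetilde O_{\rho,p}$. This leaves the term $S$ plus $\sum_s R_s$, where $R_s=D_s\Lambda_s/\Delta_s$. Since $\Lambda_s$ is a maximum of two quantities, we bound $R_s$ by the sum of the two contributions. The first is
\[
\frac{2D_sG_{\Delta_s}}{\Delta_s}\lesssim\sqrt{\frac{\mathcal H_{\Delta_s}}{\mu}}\le\sqrt{\frac{H_0}{\mu}}+\sqrt{\frac{H_1\Delta_s}{\mu}}.
\]
Summing over $s$ gives $\sqrt{H_0/\mu}\,S$ plus a geometric series in $\sqrt{\Delta_s}$ bounded by $O(\sqrt{H_1\Delta_0/\mu})$. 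The second contribution is
\[
\frac{D_s}{\Delta_s}\Bigl(\frac{64\varsigma^p D_s}{\Delta_s}\Bigr)^{1/(p-1)}\asymp\Bigl(\frac{\varsigma_{p,\Delta_s}D_s}{\Delta_s}\Bigr)^{q},
\qquad\text{since }1+\tfrac{1}{p-1}=q .
\]
Here $D_s/\Delta_s\asymp(\mu\Delta_s)^{-1/2}$. Using $(\sum a_i)^{1/p}\le\sum a_i^{1/p}$ and $(x+y+z)^q\lesssim_p x^q+y^q+z^q$, we split $\varsigma$ into its three components.

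\emph{The three noise sums.} For $\sigma_0$, the per-phase term is $(\sigma_0/\sqrt{\mu\Delta_s})^q$. This grows geometrically as $\Delta_s$ decreases, so the sum is dominated by the last phase, where $\Delta_{S-1}\ge\varepsilon$. The sum is therefore $O_p((\sigma_0/\sqrt{\mu\varepsilon})^q)$. For $\sigma_2$, we have $\sigma_2(4\Delta_s)^{1/2}/\sqrt{\mu\Delta_s}\asymp\sigma_2/\sqrt\mu$, which is constant per phase and sums to $(\sigma_2/\sqrt\mu)^qS$. For $\sigma_1$, the per-phase term is $\sigma_1G_{\Delta_s}/\sqrt{\mu\Delta_s}\asymp\sigma_1\sqrt{\mathcal H_{\Delta_s}/\mu}$. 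Its $q$-th power is at most $(\sigma_1/\sqrt\mu)^q(H_0^{q/2}+(H_1\Delta_s)^{q/2})$. Summing, the $H_0$ part gives $(\sigma_1\sqrt{H_0}/\sqrt\mu)^qS=(\sigma_1\sqrt{H_0}S^{1/q}/\sqrt\mu)^q$. The $H_1$ part is a geometric series bounded by $O_p((\sigma_1\sqrt{H_1\Delta_0}/\sqrt\mu)^q)$. Finally, $x^q+y^q\le(x+y)^q$ gives the stated bracket form.

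\emph{Main obstacle.} The substantive work is in Theorem~\ref{thm:phase}, which we are taking as given. Given it, the only delicate steps are the ones above: checking that the $\sigma_0$ geometric sum is controlled by its last term with $\Delta_{S-1}\ge\varepsilon$, and checking that the constants in the geometric sums depend only on $p$, through $1/(1-2^{-q/2})$ and $1/(1-2^{-1/2})$. Both are routine.
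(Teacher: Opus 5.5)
Your proposal is correct and follows essentially the same route as the paper's proof: restart-event induction with $x^\star\in Q_s$ by strong convexity, $R_s$ bounded by the sum of its curvature and clipping contributions, substitution of $D_s=2\sqrt{2\Delta_s/\mu}$, and then summing a constant series (the $H_0$ and $\sigma_2$ terms), an increasing geometric series controlled by $\Delta_{S-1}>\varepsilon$ (the $\sigma_0$ term), and decreasing geometric series (the $H_1$ parts), closed with $x^q+y^q\le(x+y)^q$. The only cosmetic difference is that you split $\varsigma_{p,\Delta}$ using subadditivity of $t\mapsto t^{1/p}$ before raising to the power $q$, whereas the paper applies $(u+v+w)^{q/p}\le 3^{q/p-1}(u^{q/p}+v^{q/p}+w^{q/p})$ directly to $\varsigma_{p,\Delta}^p$; both give only $p$-dependent constants.
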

\end{samepage}
Recall that $q:=p/(p-1)$ and, as in Theorem~\ref{thm:convex}, $S$ counts the gap-halving phases. The deterministic term $(1+\sqrt{H_0/\mu})S+\sqrt{H_1\Delta_0/\mu}$ improves the nonaccelerated bound of \citet{lobanov2026bias} from linear to square-root curvature dependence; its $H_0$ dependence is optimal up to logarithms \citep{nesterov2004}. The term $(\sigma_0/\sqrt{\mu\varepsilon})^q$ attains the optimal high-probability finite-$p$ rate \citep{sadiev2023hp} and matches our lower bound up to logarithms (Appendix~\ref{app:lower-additive}). The $\sigma_1$ comparison is the same as in Theorem~\ref{thm:convex}, and its optimal dependence remains open. The term $(\sigma_2/\sqrt\mu)^qS$ recovers the logarithmic $p=2$ dependence known under expected smoothness and interpolation \citep{gower2019sgd}; the same dependence holds with high probability for all $1<p\le2$ and is sharp on a fixed-condition-number smooth subclass (Appendix~\ref{app:lower-gap}). Hence, when $\sigma_0=0$, the accuracy dependence is logarithmic even for $p<2$. The proof is in Appendix~\ref{app:restart}.
\subsection{When noise decay changes the rate}\label{sec:phase-transition}
To isolate how noise decay affects the accuracy dependence, in this subsection we keep conditional unbiasedness but replace the mixed moment bound in Assumption~\ref{ass:oracle} with
\begin{equation}\label{eq:Falpha}
 \E_t\|g(x,\xi_t)-\nabla f(x)\|^p\le\tau_\alpha^pF(x)^\alpha,\qquad \alpha\ge0.
\end{equation}
The choice $\alpha=0$ gives a uniform noise bound analogous to the $\sigma_0$ term, whereas $\alpha=p/2$ recovers the gap-dependent term $\sigma_2^pF(x)^{p/2}$ used above. Larger $\alpha$ means that the noise decreases faster as $F(x)\to0$. The next result identifies when this decay changes the dependence~on~the~desired~accuracy.

\begin{samepage}
\begin{theorem}[Power-law noise]\label{thm:alpha}
Suppose Assumptions~\ref{ass:convexity} and~\ref{ass:h01} hold, $F(x_0)\le\Delta_0$, and the oracle is conditionally unbiased and satisfies \eqref{eq:Falpha} for some $1<p\le2$. Run RCSAG with target $\varepsilon\in(0,\Delta_0]$, confidence $\rho\in(0,1)$, phase noise bound $\varsigma_{p,\Delta}^p:=\tau_\alpha^p(4\Delta)^\alpha$, and the corresponding choices of $Q_s$ specified above. Then, with probability at least $1-\rho$, it returns $w_S$ satisfying $F(w_S)\le\varepsilon$. In addition to the corresponding noise-free terms in Theorems~\ref{thm:convex} and~\ref{thm:strong},~the~noise~contributes
\[
N^{\rm cvx}_\alpha=\widetilde O_{\rho,p}\!\left((\tau_\alpha D)^q\mathcal K_p(\alpha)\right),\qquad N^{\rm sc}_\alpha=\widetilde O_{\rho,p}\!\left(\left(\frac{\tau_\alpha}{\sqrt\mu}\right)^q\mathcal K_{p/2}(\alpha)\right),
\]
where, for $\beta>0$,
\[
\mathcal K_\beta(\alpha):=\begin{cases}
\varepsilon^{-(\beta-\alpha)/(p-1)},&\alpha<\beta,\\
\log(\Delta_0/\varepsilon),&\alpha=\beta,\\
\Delta_0^{(\alpha-\beta)/(p-1)},&\alpha>\beta.
\end{cases}
\]
\end{theorem}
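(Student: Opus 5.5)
The proof reuses the one-phase machinery with a different noise bound. Only the $\sigma$-terms of $\varsigma_{p,\Delta}$ enter the analysis of Procedure~\ref{proc:phase}, and only through the oracle moment at the query $u_t$.

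\textbf{Step 1: validity of each phase.} On the event $\Phi_{t-1}\le3D^2$, Lemma~\ref{lem:safe-queries} gives $F(u_t)\le4\Delta$. Then \eqref{eq:Falpha} yields
\[
\E_t\|g(u_t,\xi_t)-\nabla f(u_t)\|^p\le\tau_\alpha^p(4\Delta)^\alpha=\varsigma_{p,\Delta}^p.
\]
This holds for all $\alpha\ge0$, since $x\mapsto x^\alpha$ is nondecreasing. Lemma~\ref{lem:clipping} and Lemma~\ref{lem:one-step} use no other property of $\varsigma_{p,\Delta}$, so Theorem~\ref{thm:phase} holds verbatim with this choice. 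The restart argument is unchanged: $\rho_s=6\rho/[\pi^2(s+1)^2]$ and a union bound over phases give $F(w_S)\le\varepsilon$ with probability at least $1-\rho$.

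\textbf{Step 2: per-phase cost.} Phase $s$ uses $T_s=\lceil1024(1+R_s)\log(8/\rho_s)\rceil$ calls, with $R_s=D_s\Lambda_s/\Delta_s$ and $\Lambda_s=\max\{2G_{\Delta_s},(64\varsigma^p_{p,\Delta_s}D_s/\Delta_s)^{1/(p-1)}\}$. Bound the max by the sum. The $G$ part gives exactly the noise-free terms already handled in the proofs of Theorems~\ref{thm:convex} and~\ref{thm:strong}. The noise part contributes
\[
\frac{D_s}{\Delta_s}\Bigl(\frac{\varsigma^p D_s}{\Delta_s}\Bigr)^{1/(p-1)}=\Bigl(\frac{\varsigma D_s}{\Delta_s}\Bigr)^{q}
\]
up to constants, since $1+1/(p-1)=q$. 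Substituting $\varsigma^p=\tau_\alpha^p(4\Delta_s)^\alpha$, i.e.\ $\varsigma=\tau_\alpha(4\Delta_s)^{\alpha/p}$, gives per-phase noise cost
\[
\asymp\tau_\alpha^q D_s^q\Delta_s^{(\alpha-p)/(p-1)},
\]
because $q(\alpha/p-1)=(\alpha-p)/(p-1)$. The $\log(8/\rho_s)=O(\log(s+2)+\log(1/\rho))$ factor is absorbed into $\widetilde O_{\rho,p}$.

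\textbf{Step 3: sum over phases.} In the convex case $D_s=D$, so the total is $(\tau_\alpha D)^q\sum_{s<S}\Delta_s^{(\alpha-p)/(p-1)}$ with $\Delta_s=2^{-s}\Delta_0$. This is a geometric sum with ratio $2^{(p-\alpha)/(p-1)}$:
\begin{itemize}
\item For $\alpha<p$ it is dominated by the last term, $\Delta_{S-1}\ge\varepsilon$ giving roughly $\varepsilon$, so the sum is $O(\varepsilon^{-(p-\alpha)/(p-1)})$.
\item For $\alpha=p$ every term equals $1$, giving $S=O(\log(\Delta_0/\varepsilon))$.
\item For $\alpha>p$ it is dominated by the first term, giving $O(\Delta_0^{(\alpha-p)/(p-1)})$.
\end{itemize}
This is $\mathcal K_p(\alpha)$. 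In the strongly convex case $D_s=2\sqrt{2\Delta_s/\mu}$, so $D_s^q\propto\mu^{-q/2}\Delta_s^{q/2}$. Using $q/2=(p/2)/(p-1)$, the exponent becomes $(\alpha-p/2)/(p-1)$, and the same trichotomy gives $(\tau_\alpha/\sqrt\mu)^q\mathcal K_{p/2}(\alpha)$. The constants $1/(1-2^{-|\cdot|})$ depend only on $p$ and $\alpha$ and are suppressed.

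\textbf{Main obstacle.} The step needing care is the geometric summation near the threshold. The constant $1/(1-2^{-(\beta-\alpha)/(p-1)})$ blows up as $\alpha\to\beta$. The clean way is to state the bound for fixed $\alpha\ne\beta$ with that constant absorbed into $\widetilde O_{\rho,p}$, which then also depends on $\alpha$, and to treat $\alpha=\beta$ separately as the logarithmic case. A second point is the $\log(8/\rho_s)$ factor, which grows like $\log s$. In the summable case ($\alpha>\beta$) this is harmless, since geometric decay absorbs polynomial-in-$s$ factors. In the logarithmic case it adds only a $\log S$ factor, which is suppressed.
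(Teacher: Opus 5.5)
Your proposal is correct and follows the paper's proof essentially step for step. The paper also localizes via $F(u_t)\le4\Delta$ to validate the phase noise bound, substitutes $\varsigma_{p,\Delta}$ into the stochastic part of $R$ to get the per-phase cost $(\tau_\alpha D_s)^q\Delta_s^{(\alpha-p)/(p-1)}$, and sums the three geometric regimes, with the strongly convex exponent shifted by $q/2$; your remarks on the $\alpha$-dependent constant near the threshold and on the $\log(8/\rho_s)$ factors match what the paper absorbs into constants depending on $p$ and $\alpha$ and into $\widetilde O_{\rho,p}$.
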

Theorem~\ref{thm:alpha} separates tail heaviness, controlled by $p$, from noise decay, controlled by $\alpha$. In a convex phase, the stochastic cost scales as $(\tau_\alpha D)^q\Delta_s^{(\alpha-p)/(p-1)}$, so $\alpha=p$ separates polynomial, logarithmic, and accuracy-independent costs. Strong convexity gives $D_s=2\sqrt{2\Delta_s/\mu}$ and shifts the threshold to $\alpha=p/2$; hence the power used in Assumption~\ref{ass:oracle} is polynomial in the convex case but logarithmic in the strongly convex case. To our knowledge, this is the first high-probability accelerated result to identify both finite-$p$ transitions. The proof is in Appendix~\ref{app:structural}.
\end{samepage}

\section{Discussion and Limitations}\label{sec:limits}
RCSAG requires an initial bound $F(x_0)\le\Delta_0$ but never evaluates the gap during the run; Appendix~\ref{app:init} constructs such a bound from stochastic gradients without knowing $f^\star$ or an exact gradient. In the convex case, the method also needs a bounded set $Q$ containing $x_0$ and an optimizer, together with an efficient projection onto $Q$; standard constructions are given in Appendix~\ref{app:choosing-Q}. Strong convexity replaces this requirement by shrinking balls determined by $\Delta_s$ and $\mu$, which are explicit in common regularized models. The tuned Procedure~\ref{proc:phase} also uses $p$ and upper bounds on $\sigma_i$ and $H_i$. The horizon-driven variant in Appendix~\ref{app:parameterfree} removes them from the inner updates for a prescribed $T$, but the unknown parameters still determine a sufficient horizon, which the method cannot infer from observed gradients; the strongly convex wrapper also needs $\mu$. Two theoretical questions remain open: the optimal dependence on $\sigma_1$ and matching lower bounds for the general $F(x)^\alpha$ noise model (Appendices~\ref{app:sigma1} and~\ref{app:structural}). Finally, the experiments use practical tuning and a convex head on frozen features; the finite dataset is not a literal infinite-variance oracle. Extending the theory and experiments to end-to-end nonconvex training is left for future work.

\vspace{-0.5em}
\section{Experiments}\label{sec:experiments}
\vspace{-0.5em}
Linear probing trains a prediction head while keeping the pretrained encoder fixed, and is routinely used to evaluate or adapt learned representations \citep{kornblith2019better,kumar2022finetuning}. It gives us a convex problem built from real image features, without the additional effects of updating the encoder. We use this setup to ask a simple question: does RCSAG preserve acceleration when some examples produce unusually large gradients?
\vspace{-1.3em}

\begin{wrapfigure}{r}{0.52\textwidth}
\vspace{-1.2em}
\centering
\includegraphics[width=\linewidth]{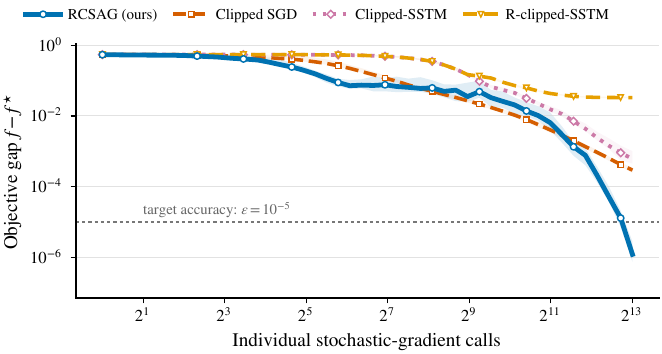}
\vspace{-0.8em}
\caption{Median gap and 10th--90th percentile bands over 30 seeds with fixed Pareto weights.}
\label{fig:real-cosh}
\vspace{-1.0em}
\end{wrapfigure}

\paragraph{Setup and protocol.}
We train a kernelized linear head on frozen DINOv2 ViT-S/14 features \citep{oquab2024dinov2} from 128 balanced CIFAR-10 images \citep{krizhevsky2009learning}. For the normalized regularized Gram matrix $K$ with $\kappa(K)=20$, we minimize $f(\alpha)=n^{-1}\sum_i\omega_i[\cosh((K\alpha)_i-y_i)-1]$. For the realized $K$ and weights, the problem is $\mu$-strongly convex with $\mu=7.65\times10^{-6}$ and satisfies the Hessian $(H_0,H_1)$ bound with $H_0=H_1=R^2=2.77\times10^{-2}$, where $R=\max_i\|K_{i:}\|$. It also exactly interpolates: all component gradients vanish when $K\alpha^\star=y$. The fixed weights $\omega_i$ are drawn once from a Pareto distribution with shape $1.5$ and create occasional large component gradients without adding oracle noise. We~compare~RCSAG (Algorithm~\ref{alg:rcsag}) with Clipped SGD \citep{gorbunov2020heavy}, Clipped-SSTM \citep{sadiev2023hp}, and a practical restarted Clipped-SSTM variant \citep{gorbunov2020heavy}. All methods start from zero, use the same sampled indices and one component gradient per iteration, and receive $8{,}192$ calls. We tune 16 configurations per method on six seeds and evaluate on 30 disjoint seeds. Appendix~\ref{app:experiments} gives the grids, an equal-weight control, and~an~additional~synthetic~experiment.
\vspace{-1.3em}
\paragraph{Results.}
RCSAG is the only method whose final gap is below $\varepsilon=10^{-5}$ on every held-out seed. Its median gap is $1.22\times10^{-6}$, compared with $2.95\times10^{-4}$ for Clipped SGD, the next-best method, while only $0.15\%$ of its updates are clipped. Thus rare clipping is enough to preserve fast progress on this problem; we do not claim uniform empirical superiority.

\vspace{-0.5em}
\section{Conclusion}
\vspace{-1em}
This work answers the central question affirmatively: clipping can preserve acceleration under nonuniform curvature and mixed heavy-tailed noise. RCSAG turns a gap bound into phasewise scales; projection keeps momentum queries inside the local model, while single-sample clipping and a stopped potential control errors and bias. Restarts reach any prescribed accuracy with high probability, without an error floor, while retaining square-root dependence on both curvature scales. Under strong convexity, accuracy dependence is logarithmic when $\sigma_0=0$, even for $p<2$; the $F(x)^\alpha$ model exposes transitions at $\alpha=p$ and $\alpha=p/2$. Together, these results establish phasewise localization as a general design principle for accelerated optimization with evolving curvature and heavy-tailed noise without reducing either source to a uniform global worst-case bound.

\section*{AI Use Statement}
\addcontentsline{toc}{section}{AI Use Statement}
Generative AI tools were used for language editing and to improve the clarity of the manuscript. We reviewed all resulting changes and take full responsibility for the final content.

\section*{Acknowledgments}
We thank Yuriy Dorn for his support.


\bibliographystyle{unsrtnat}
\bibliography{3_references}  

@book{nesterov2004,
  title={Introductory Lectures on Convex Optimization: A Basic Course},
  author={Nesterov, Yurii},
  year={2004},
  publisher={Kluwer Academic Publishers}
}

@techreport{krizhevsky2009learning,
  title={Learning Multiple Layers of Features from Tiny Images},
  author={Krizhevsky, Alex},
  institution={University of Toronto},
  year={2009}
}

@inproceedings{kornblith2019better,
  title={Do Better {ImageNet} Models Transfer Better?},
  author={Kornblith, Simon and Shlens, Jonathon and Le, Quoc V.},
  booktitle={Proceedings of the IEEE/CVF Conference on Computer Vision and Pattern Recognition},
  pages={2661--2671},
  year={2019}
}

@inproceedings{kumar2022finetuning,
  title={Fine-Tuning Can Distort Pretrained Features and Underperform Out-of-Distribution},
  author={Kumar, Ananya and Raghunathan, Aditi and Jones, Robbie and Ma, Tengyu and Liang, Percy},
  booktitle={International Conference on Learning Representations},
  year={2022}
}

@article{oquab2024dinov2,
  title={{DINOv2}: Learning Robust Visual Features without Supervision},
  author={Oquab, Maxime and Darcet, Timoth{\'e}e and Moutakanni, Th{\'e}o and Vo, Huy V. and Szafraniec, Marc and Khalidov, Vasil and Fernandez, Pierre and Haziza, Daniel and Massa, Francisco and El-Nouby, Alaaeldin and Assran, Mido and Ballas, Nicolas and Galuba, Wojciech and Howes, Russell and Huang, Po-Yao and Li, Shang-Wen and Misra, Ishan and Rabbat, Michael and Sharma, Vasu and Synnaeve, Gabriel and Xu, Hu and J{\'e}gou, Herv{\'e} and Mairal, Julien and Labatut, Patrick and Joulin, Armand and Bojanowski, Piotr},
  journal={Transactions on Machine Learning Research},
  year={2024}
}

@article{ghadimi2013shrinking,
  title={Optimal Stochastic Approximation Algorithms for Strongly Convex Stochastic Composite Optimization, {II}: Shrinking Procedures and Optimal Algorithms},
  author={Ghadimi, Saeed and Lan, Guanghui},
  journal={SIAM Journal on Optimization},
  volume={23},
  number={4},
  pages={2061--2089},
  year={2013},
  doi={10.1137/110848876}
}

@inproceedings{gorbunov2020heavy,
  title={Stochastic Optimization with Heavy-Tailed Noise via Accelerated Gradient Clipping},
  author={Gorbunov, Eduard and Danilova, Marina and Gasnikov, Alexander},
  booktitle={Advances in Neural Information Processing Systems},
  volume={33},
  year={2020}
}

@inproceedings{nguyen2023clipped,
  title={Improved Convergence in High Probability of Clipped Gradient Methods with Heavy Tailed Noise},
  author={Nguyen, Ta Duy and Nguyen, Thien H. and Ene, Alina and Nguyen, Huy},
  booktitle={Advances in Neural Information Processing Systems},
  volume={36},
  year={2023}
}

@inproceedings{sadiev2023hp,
  title={High-Probability Bounds for Stochastic Optimization and Variational Inequalities: The Case of Unbounded Variance},
  author={Sadiev, Abdurakhmon and Danilova, Marina and Gorbunov, Eduard and Horvath, Samuel and Gidel, Gauthier and Dvurechensky, Pavel and Gasnikov, Alexander and Richtarik, Peter},
  booktitle={Proceedings of the 40th International Conference on Machine Learning},
  series={Proceedings of Machine Learning Research},
  volume={202},
  pages={29563--29648},
  year={2023}
}

@inproceedings{gower2019sgd,
  title={SGD: General Analysis and Improved Rates},
  author={Gower, Robert Mansel and Loizou, Nicolas and Qian, Xun and Sailanbayev, Alibek and Shulgin, Egor and Richtarik, Peter},
  booktitle={Proceedings of the 36th International Conference on Machine Learning},
  series={Proceedings of Machine Learning Research},
  volume={97},
  pages={5200--5209},
  year={2019}
}

@article{khaled2023better,
  title={Better Theory for SGD in the Nonconvex World},
  author={Khaled, Ahmed and Richtarik, Peter},
  journal={Transactions on Machine Learning Research},
  year={2023}
}

@inproceedings{zhang2020clipping,
  title={Why Gradient Clipping Accelerates Training: A Theoretical Justification for Adaptivity},
  author={Zhang, Jingzhao and He, Tianxing and Sra, Suvrit and Jadbabaie, Ali},
  booktitle={International Conference on Learning Representations},
  year={2020}
}

@inproceedings{li2023generalized,
  title={Convex and Non-convex Optimization Under Generalized Smoothness},
  author={Li, Haochuan and Qian, Jian and Tian, Yi and Rakhlin, Alexander and Jadbabaie, Ali},
  booktitle={Advances in Neural Information Processing Systems},
  volume={36},
  year={2023}
}

@article{lobanov2026h01,
  title={A Few Accelerated Algorithms for Convex Optimization under {$(H_0,H_1)$}-Smoothness},
  author={Lobanov, Aleksandr},
  journal={arXiv preprint arXiv:2608.04884},
  year={2026}
}

@article{lobanov2024linear,
  title={Linear Convergence Rate in Convex Setup is Possible! Gradient Descent Method Variants under {$(L_0,L_1)$}-Smoothness},
  author={Lobanov, Aleksandr and Gasnikov, Alexander and Gorbunov, Eduard and Tak{\'a}{\v c}, Martin},
  journal={arXiv preprint arXiv:2412.17050},
  year={2024}
}

@article{lobanov2025power,
  title={Power of {$(L_0,L_1)$}-Smoothness in Stochastic Convex Optimization: First- and Zero-Order Algorithms},
  author={Lobanov, Aleksandr and Gasnikov, Alexander},
  journal={arXiv preprint arXiv:2501.18198},
  year={2025}
}

@article{dvinskikh2026localize,
  title={Localize, Restart, Accelerate: Stochastic Optimization under Generalized Smoothness},
  author={Dvinskikh, Darina and Gasnikov, Alexander and Lobanov, Aleksandr and Latypov, Ilgam},
  journal={arXiv preprint arXiv:2609.06555},
  year={2026}
}

@article{yuhonglin2025rsag,
  title={Convergence Analysis of Stochastic Accelerated Gradient Methods for Generalized Smooth Optimizations},
  author={Yu, Chenhao and Hong, Yusu and Lin, Junhong},
  journal={arXiv preprint arXiv:2502.11125},
  year={2025}
}

@inproceedings{gupta2024agnes,
  title={Nesterov Acceleration Despite Very Noisy Gradients},
  author={Gupta, Kanan and Siegel, Jonathan W. and Wojtowytsch, Stephan},
  booktitle={Advances in Neural Information Processing Systems},
  volume={37},
  year={2024}
}

@article{vasin2023relative,
  title={Accelerated Gradient Methods with Absolute and Relative Noise in the Gradient},
  author={Vasin, Artem and Gasnikov, Alexander and Dvurechensky, Pavel and Spokoiny, Vladimir},
  journal={Optimization Methods and Software},
  volume={38},
  number={6},
  pages={1180--1229},
  year={2023},
  doi={10.1080/10556788.2023.2212503}
}

@article{kornilov2025intermediate,
  title={Intermediate Gradient Methods with Relative Inexactness},
  author={Kornilov, Nikita and Alkousa, Mohammad and Gorbunov, Eduard and Stonyakin, Fedor and Dvurechensky, Pavel and Gasnikov, Alexander},
  journal={Journal of Optimization Theory and Applications},
  volume={207},
  pages={62},
  year={2025},
  doi={10.1007/s10957-025-02809-y}
}

@inproceedings{faw2022adaptivity,
  title={The Power of Adaptivity in SGD: Self-Tuning Step Sizes with Unbounded Gradients and Affine Variance},
  author={Faw, Matthew and Tziotis, Isidoros and Caramanis, Constantine and Mokhtari, Aryan and Shakkottai, Sanjay and Ward, Rachel},
  booktitle={Proceedings of Thirty Fifth Conference on Learning Theory},
  series={Proceedings of Machine Learning Research},
  volume={178},
  pages={313--355},
  year={2022}
}

@inproceedings{hong2024adagrad,
  title={Revisiting Convergence of AdaGrad with Relaxed Assumptions},
  author={Hong, Yusu and Lin, Junhong},
  booktitle={Proceedings of the Fortieth Conference on Uncertainty in Artificial Intelligence},
  series={Proceedings of Machine Learning Research},
  volume={244},
  pages={1727--1750},
  year={2024}
}

@article{freedman1975,
  title={On Tail Probabilities for Martingales},
  author={Freedman, David A.},
  journal={The Annals of Probability},
  volume={3},
  number={1},
  pages={100--118},
  year={1975}
}

@article{minsker2015geom,
  title={Geometric Median and Robust Estimation in Banach Spaces},
  author={Minsker, Stanislav},
  journal={Bernoulli},
  volume={21},
  number={4},
  pages={2308--2335},
  year={2015}
}

@inproceedings{pascanu2013rnn,
  title={On the Difficulty of Training Recurrent Neural Networks},
  author={Pascanu, Razvan and Mikolov, Tomas and Bengio, Yoshua},
  booktitle={Proceedings of the 30th International Conference on Machine Learning},
  series={Proceedings of Machine Learning Research},
  volume={28},
  pages={1310--1318},
  year={2013}
}

@article{chowdhery2023palm,
  title={{PaLM}: Scaling Language Modeling with Pathways},
  author={Chowdhery, Aakanksha and others},
  journal={Journal of Machine Learning Research},
  volume={24},
  number={240},
  pages={1--113},
  year={2023}
}

@inproceedings{koloskova2023clipping,
  title={Revisiting Gradient Clipping: Stochastic Bias and Tight Convergence Guarantees},
  author={Koloskova, Anastasia and Hendrikx, Hadrien and Stich, Sebastian U.},
  booktitle={Proceedings of the 40th International Conference on Machine Learning},
  series={Proceedings of Machine Learning Research},
  volume={202},
  pages={17343--17363},
  year={2023}
}

@inproceedings{simsekli2019tail,
  title={A Tail-Index Analysis of Stochastic Gradient Noise in Deep Neural Networks},
  author={Simsekli, Umut and Sagun, Levent and Gurbuzbalaban, Mert},
  booktitle={Proceedings of the 36th International Conference on Machine Learning},
  series={Proceedings of Machine Learning Research},
  volume={97},
  pages={5827--5837},
  year={2019}
}

@inproceedings{battash2024noise,
  title={Revisiting the Noise Model of Stochastic Gradient Descent},
  author={Battash, Barak and Wolf, Lior and Lindenbaum, Ofir},
  booktitle={Proceedings of the 27th International Conference on Artificial Intelligence and Statistics},
  series={Proceedings of Machine Learning Research},
  volume={238},
  pages={4780--4788},
  year={2024}
}

@inproceedings{ma2018interpolation,
  title={The Power of Interpolation: Understanding the Effectiveness of {SGD} in Modern Over-Parametrized Learning},
  author={Ma, Siyuan and Bassily, Raef and Belkin, Mikhail},
  booktitle={Proceedings of the 35th International Conference on Machine Learning},
  series={Proceedings of Machine Learning Research},
  volume={80},
  pages={3325--3334},
  year={2018}
}

@article{liu2025warmup,
  title={Theoretical Analysis on How Learning Rate Warmup Accelerates Convergence},
  author={Liu, Yuxing and Ge, Yuze and Pan, Rui and Kang, An and Zhang, Tong},
  journal={arXiv preprint arXiv:2509.07972},
  year={2025}
}

@article{alimisis2025warmup,
  title={Why Do We Need Warm-up? A Theoretical Perspective},
  author={Alimisis, Foivos and Islamov, Rustem and Lucchi, Aurelien},
  journal={arXiv preprint arXiv:2510.03164},
  year={2025}
}

@inproceedings{vaswani2025armijo,
  title={Armijo Line-search Can Make ({S}tochastic) Gradient Descent Provably Faster},
  author={Vaswani, Sharan and Babanezhad Harikandeh, Reza},
  booktitle={Proceedings of the 42nd International Conference on Machine Learning},
  series={Proceedings of Machine Learning Research},
  volume={267},
  pages={61035--61070},
  year={2025}
}

@article{vaswani2026nonuniform,
  title={Convergence of Steepest Descent and {Adam} under Non-Uniform Smoothness},
  author={Vaswani, Sharan and Sun, Yifan and Babanezhad, Reza},
  journal={arXiv preprint arXiv:2605.30648},
  year={2026}
}

@article{zhanglin2026scale,
  title={Scale-Invariant Neural Network Optimization: Norm Geometry and Heavy-Tailed Noise},
  author={Zhang, Jiayu and Lin, Tianyi},
  journal={arXiv preprint arXiv:2605.18528},
  year={2026}
}

@article{lobanov2026bias,
  title={Avoiding Bias in Clipped {SGD} for Overparameterized Models under Generalized Smoothness},
  author={Lobanov, Aleksandr and Koloskova, Anastasia},
  journal={arXiv preprint arXiv:2605.14800},
  year={2026}
}

\appendix
\newcommand{\proofparagraph}[1]{%
  \par\smallskip\noindent\textbf{#1}\nobreak\enspace}
\AtBeginEnvironment{proof}{\let\paragraph\proofparagraph}

\section{Auxiliary Results for One Phase}\label{app:geometry}
This appendix proves Lemmas~\ref{lem:selfbound}--\ref{lem:one-step}. The proofs follow the order in which the results are used by Procedure~\ref{proc:phase}. Lemma~\ref{lem:selfbound} first converts an objective-gap bound into a gradient bound. Lemma~\ref{lem:safe-queries} then shows that the accelerated query and averaging steps remain inside the radius where Assumption~\ref{ass:h01} is valid. Lemma~\ref{lem:clipping} controls the centered error and the bias created by clipping. Finally, Lemma~\ref{lem:one-step} combines these ingredients in the potential recursion used to prove the one-phase contraction.

Throughout this appendix, recall that $\mathcal H(x):=H_0+H_1F(x)$, $\mathcal H_\Delta:=H_0+H_1\Delta$, and $C_\star=19+12\sqrt{2}$ when $H_1>0$, whereas $C_\star=2$ when $H_1=0$.

\subsection{Proof of Lemma~\ref{lem:selfbound}}
\begin{proof}
Fix an arbitrary $x\in\R^d$ and write
$F(x)=f(x)-f^\star$ and
$\mathcal H(x)=H_0+H_1F(x)$. We first use one step of exactly the local-model radius to prove the estimate involving $r_H^{-1}$. We then use a shorter quadratic-model step to convert this estimate into the gradient self-bound. If $\nabla f(x)=0$, both conclusions hold because their left-hand sides vanish, so below we assume $\nabla f(x)\ne0$.

\paragraph{Estimate at the local-model radius.}
Suppose first that $H_1>0$. Define
\[
 y:=x-r_H\frac{\nabla f(x)}{\|\nabla f(x)\|}.
\]
The definition gives $\|y-x\|=r_H$, so Assumption~\ref{ass:h01} applies with base point $x$ and trial point $y$. Moreover, $f^\star\le f(y)$ because $f^\star$ is the minimum value. Consequently,
\begin{align}
 f^\star
 &\le f(y)\le f(x)+\ip{\nabla f(x)}{y-x}+\mathcal H(x)\|y-x\|^2=f(x)-r_H\|\nabla f(x)\|+\mathcal H(x)r_H^2.
 \label{eq:app-radius-trial}
\end{align}
Subtracting $f^\star$ from the last line of \eqref{eq:app-radius-trial}, rearranging, and dividing by $r_H^2>0$ yield
\begin{align*}
 \frac{\|\nabla f(x)\|}{r_H}
 &\le \frac{F(x)}{r_H^2}+\mathcal H(x)=\bigl(18+12\sqrt{2}\bigr)H_1F(x)+\mathcal H(x)\le\bigl(19+12\sqrt{2}\bigr)\mathcal H(x)
 =C_\star\mathcal H(x).
\end{align*}
The equality uses $r_H^{-2}=(2\sqrt{3}+\sqrt{6})^2H_1=(18+12\sqrt{2})H_1$. The last inequality follows from $H_1F(x)\le H_0+H_1F(x)=\mathcal H(x)$. If $H_1=0$, then $r_H=\infty$ and the convention $r_H^{-1}=0$ makes the first inequality in \eqref{eq:selfbounds} automatic.

\paragraph{Gradient self-bound.}
We next prove the second inequality. If $\mathcal H(x)=0$, then $\nabla f(x)=0$: for $H_1>0$, the equality $\mathcal H(x)=H_0+H_1F(x)=0$ implies $F(x)=0$, so $x$ is a minimizer of the differentiable convex function; for $H_1=0$, Assumption~\ref{ass:h01} has zero quadratic term globally, and the existence of a minimizer again forces the gradient to vanish. We may therefore assume that $\mathcal H(x)>0$ and define
\[
 M_x:=C_\star\mathcal H(x),
 \qquad
 x^+:=x-\frac{\nabla f(x)}{M_x}.
\]
When $H_1>0$, the first inequality in \eqref{eq:selfbounds} gives $\|x^+-x\|=\|\nabla f(x)\|/M_x\le r_H$. When $H_1=0$, the upper model is global. Thus Assumption~\ref{ass:h01} is valid for the pair $(x,x^+)$. Since $C_\star\ge2$, we also have $M_x\ge2\mathcal H(x)$, and hence
\begin{align}
 f(x^+)
 &\le f(x)-\frac{\|\nabla f(x)\|^2}{M_x}
 +\mathcal H(x)\frac{\|\nabla f(x)\|^2}{M_x^2}\le f(x)-\frac{\|\nabla f(x)\|^2}{2M_x}.
 \label{eq:app-self-descent}
\end{align}
The last inequality follows from $\mathcal H(x)/M_x\le1/2$. Using $f^\star\le f(x^+)$ in \eqref{eq:app-self-descent} gives
\[
 F(x)=f(x)-f^\star
 \ge \frac{\|\nabla f(x)\|^2}{2M_x}
 =\frac{\|\nabla f(x)\|^2}{2C_\star\mathcal H(x)}.
\]
Multiplying by $2C_\star\mathcal H(x)$ proves the second inequality in \eqref{eq:selfbounds}.
\end{proof}

\subsection{Proof of Lemma~\ref{lem:safe-queries}}
\begin{proof}
Fix $t\in\{1,\ldots,T\}$ and assume
$\Phi_{t-1}\le3D^2$. We first verify that the parameter choice makes each accelerated displacement short enough for Assumption~\ref{ass:h01}. We then use the potential bound to control $F(y_{t-1})$, transfer this control to the query $u_t$, and finally derive the gradient bound at $u_t$.

\paragraph{Weight control.}
Define
\[
 M:=32C_\star\mathcal H_\Delta,
\]
so that $G_\Delta=\sqrt{\Delta M}$. The clipping radius satisfies $\Lambda\ge2G_\Delta$, and therefore
\[
 R=\frac{D\Lambda}{\Delta}
 \ge2D\sqrt{\frac{M}{\Delta}}.
\]
Because $\rho\in(0,1)$ and $T=\lceil1024(1+R)\log(8/\rho)\rceil$, we have $T\ge(5/2)R\ge5D\sqrt{M/\Delta}$. Substituting $a=5D^2/(\Delta T)$ and $A_0=D^2/\Delta$ then gives
\begin{equation}
 Ma^2=\frac{25MD^4}{\Delta^2T^2}\le\frac{D^2}{\Delta}=A_0\le A_t.
 \label{eq:app-weight-control}
\end{equation}

\paragraph{Previous output gap.}
Since
$\Phi_{t-1}=A_{t-1}F(y_{t-1})+\frac12\|z_{t-1}-x^\star\|^2$ and the squared-distance term is nonnegative,
\[
 A_{t-1}F(y_{t-1})\le\Phi_{t-1}\le3D^2.
\]
Moreover, $A_{t-1}\ge A_0=D^2/\Delta$. Dividing by $A_{t-1}>0$ proves
\begin{equation}
 F(y_{t-1})\le3\Delta.
 \label{eq:app-output-gap}
\end{equation}

\paragraph{First local displacement.}
Let $\lambda_t:=a/A_t$. Equation~\ref{eq:app-weight-control} implies
\[
 \lambda_t^2=\frac{a^2}{A_t^2}\le\frac{1}{MA_t}\le\frac{1}{MA_0}.
\]
The set $Q$ is convex, $y_0=z_0=w\in Q$, projection gives $z_j\in Q$, and the update of $y_j$ is a convex combination of two points in $Q$. Hence $y_j,z_j\in Q$ for every $j$. Using the definition of $u_t$ and the diameter bound for $Q$, we obtain
\begin{align}
 \|u_t-y_{t-1}\|
 &=\lambda_t\|z_{t-1}-y_{t-1}\|\nonumber\\
 &\le\lambda_tD
 \le\sqrt{\frac{\Delta}{M}}.
 \label{eq:app-mix-step}
\end{align}
If $H_1>0$, then $M\ge32C_\star H_1\Delta$, and thus
\[
 \frac{\Delta}{M}\le\frac{1}{32C_\star H_1}\le r_H^2.
\]
The last inequality follows from $r_H^{-2}=(18+12\sqrt{2})H_1$ and $32C_\star\ge18+12\sqrt{2}$. Therefore \eqref{eq:app-mix-step} gives $\|u_t-y_{t-1}\|\le r_H$. For $H_1=0$, the same conclusion follows from $r_H=\infty$.

\paragraph{Query gap and curvature.}
We may now apply the local upper model from $y_{t-1}$ to $u_t$ because \eqref{eq:app-mix-step} verifies its radius condition. First, \eqref{eq:app-output-gap} gives
\[
 \mathcal H(y_{t-1})=H_0+H_1F(y_{t-1})
 \le H_0+3H_1\Delta\le3\mathcal H_\Delta.
\]
Combining this inequality with the second bound in Lemma~\ref{lem:selfbound} yields
\[
 \|\nabla f(y_{t-1})\|^2
 \le2C_\star F(y_{t-1})\mathcal H(y_{t-1})
 \le18C_\star\Delta\mathcal H_\Delta.
\]
Assumption~\ref{ass:h01}, the Cauchy--Schwarz inequality, and \eqref{eq:app-mix-step} therefore give
\begin{align*}
 F(u_t)
 &\le F(y_{t-1})
 +\|\nabla f(y_{t-1})\|\,\|u_t-y_{t-1}\|
 +\mathcal H(y_{t-1})\|u_t-y_{t-1}\|^2\\
 &\le3\Delta
 +\sqrt{18C_\star\Delta\mathcal H_\Delta}
   \sqrt{\frac{\Delta}{32C_\star\mathcal H_\Delta}}
 +3\mathcal H_\Delta\frac{\Delta}{32C_\star\mathcal H_\Delta}\\
 &=\left(3+\frac34+\frac{3}{32C_\star}\right)\Delta
 <4\Delta.
\end{align*}
This proves the second gap bound in \eqref{eq:safe-main}. It also implies
\begin{equation}
 \mathcal H(u_t)=H_0+H_1F(u_t)\le H_0+4H_1\Delta\le4\mathcal H_\Delta.
 \label{eq:app-query-curvature}
\end{equation}

\paragraph{Second local displacement and query gradient.}
Next, subtracting the definitions of $y_t$ and $u_t$ gives
\[
 y_t-u_t=\lambda_t(z_t-z_{t-1}).
\]
Both $z_t$ and $z_{t-1}$ belong to $Q$, so the same argument as in \eqref{eq:app-mix-step} yields
$\|y_t-u_t\|\le\lambda_tD\le r_H$. Thus both displacements used later in the local-model argument are valid.

Finally, \eqref{eq:app-query-curvature} and the definition of $M$ give
\begin{equation}
 \frac{\mathcal H(u_t)}{M}
 \le\frac{4\mathcal H_\Delta}{32C_\star\mathcal H_\Delta}
 =\frac{1}{8C_\star}\le\frac1{16}.
 \label{eq:app-curvature-ratio}
\end{equation}
Applying the second inequality in Lemma~\ref{lem:selfbound} at $u_t$ and using $F(u_t)\le4\Delta$ together with \eqref{eq:app-query-curvature}, we obtain
\[
 \|\nabla f(u_t)\|^2
 \le2C_\star F(u_t)\mathcal H(u_t)
 \le32C_\star\Delta\mathcal H_\Delta
 =G_\Delta^2.
\]
Taking square roots proves $\|\nabla f(u_t)\|\le G_\Delta$ and completes the proof of \eqref{eq:safe-main}.
\end{proof}

\subsection{Proof of Lemma~\ref{lem:clipping}}
\begin{proof}
Fix an iteration $t$ on which the localization conclusions of
Lemma~\ref{lem:safe-queries} hold. We condition on the past, first derive the localized $p$th-moment bound, then verify the almost-sure clipping envelope, the clipping bias, and the conditional second moment of the centered error.

\paragraph{Conditioning and local moment bound.}
Condition on $\mathcal F_{t-1}$. The query $u_t$ is then fixed because it is determined by the history before the sample $\xi_t$ is drawn. Lemma~\ref{lem:safe-queries} and the assumption $\Lambda\ge2G_\Delta$ give
\[
 \|\nabla f(u_t)\|\le G_\Delta\le\frac{\Lambda}{2}.
\]
Thus $\nabla f(u_t)$ lies in the Euclidean ball of radius $\Lambda$. Assumption~\ref{ass:oracle}, together with $F(u_t)\le4\Delta$ and the displayed gradient bound, gives
\begin{equation}
 \E_t\|g(u_t,\xi_t)-\nabla f(u_t)\|^p
 \le\sigma_0^p+\sigma_1^pG_\Delta^p+\sigma_2^p(4\Delta)^{p/2}
 =\varsigma_{p,\Delta}^p.
 \label{eq:app-phase-noise}
\end{equation}

\paragraph{Almost-sure bound.}
Both $\widehat g_t=\clip_\Lambda(g(u_t,\xi_t))$ and its conditional expectation $\E_t[\widehat g_t]$ belong to the radius-$\Lambda$ ball: the first statement follows from clipping, and the second follows from convexity of the ball. Therefore
\[
 \|\zeta_t\|=\|\widehat g_t-\E_t[\widehat g_t]\|
 \le\|\widehat g_t\|+\|\E_t[\widehat g_t]\|\le2\Lambda
 \qquad\text{almost surely}.
\]

\paragraph{Clipping bias.}
Conditional unbiasedness in Assumption~\ref{ass:oracle} gives $\E_t[g(u_t,\xi_t)]=\nabla f(u_t)$, and hence
\[
 \beta_t
 =\E_t[\widehat g_t]-\nabla f(u_t)
 =\E_t[\widehat g_t-g(u_t,\xi_t)].
\]
The vectors $\widehat g_t$ and $g(u_t,\xi_t)$ differ only on the event $\{\|g(u_t,\xi_t)\|>\Lambda\}$. On this event,
\[
 \|g(u_t,\xi_t)-\nabla f(u_t)\|
 \ge\|g(u_t,\xi_t)\|-\|\nabla f(u_t)\|
 >\frac{\Lambda}{2}.
\]
Moreover, clipping is Euclidean projection onto the radius-$\Lambda$ ball, and $\nabla f(u_t)$ belongs to that ball. The defining optimality of projection therefore gives
\[
 \|\widehat g_t-g(u_t,\xi_t)\|
 \le\|g(u_t,\xi_t)-\nabla f(u_t)\|.
\]
Combining these facts with conditional Jensen's inequality yields
\begin{align*}
 \|\beta_t\|
 &\le\E_t\!\left[\|g(u_t,\xi_t)-\nabla f(u_t)\|
 \mathbf1\{\|g(u_t,\xi_t)-\nabla f(u_t)\|>\Lambda/2\}\right]\\
 &\le(\Lambda/2)^{1-p}
 \E_t\|g(u_t,\xi_t)-\nabla f(u_t)\|^p\\
 &\le2\varsigma_{p,\Delta}^p\Lambda^{1-p}.
\end{align*}
The second inequality uses $s\mathbf1\{s>c\}\le c^{1-p}s^p$ for $s,c>0$, and the last uses \eqref{eq:app-phase-noise} together with $2^{p-1}\le2$ for $1<p\le2$.

\paragraph{Conditional second moment.}
It remains to control the conditional second moment of the centered term. Conditional expectation minimizes mean squared distance, so
\[
 \E_t\|\zeta_t\|^2
 =\E_t\|\widehat g_t-\E_t[\widehat g_t]\|^2
 \le\E_t\|\widehat g_t-\nabla f(u_t)\|^2.
\]
Projection onto a closed convex set is nonexpansive, and $\nabla f(u_t)$ lies in the clipping ball. Hence
$\|\widehat g_t-\nabla f(u_t)\|\le\|g(u_t,\xi_t)-\nabla f(u_t)\|$. At the same time,
$\|\widehat g_t-\nabla f(u_t)\|\le\|\widehat g_t\|+\|\nabla f(u_t)\|\le2\Lambda$. It follows that
\[
 \|\widehat g_t-\nabla f(u_t)\|^2
 \le(2\Lambda)^{2-p}\|g(u_t,\xi_t)-\nabla f(u_t)\|^p.
\]
Taking conditional expectations, using \eqref{eq:app-phase-noise}, and observing that $2^{2-p}\le2$ give
\[
 \E_t\|\zeta_t\|^2
 \le2^{2-p}\varsigma_{p,\Delta}^p\Lambda^{2-p}
 \le2\varsigma_{p,\Delta}^p\Lambda^{2-p}.
\]
This proves all three inequalities in \eqref{eq:clip-summary}.
\end{proof}

\subsection{Proof of Lemma~\ref{lem:one-step}}
\begin{proof}
Fix an iteration $t$ satisfying the premise of Lemma~\ref{lem:safe-queries}. We first apply the local upper model to the averaging update, then use convexity to couple the objective terms, and finally use the Euclidean projection optimality condition to control the auxiliary step.
\paragraph{Local-model progress.}
Lemma~\ref{lem:safe-queries} gives $\|y_t-u_t\|\le r_H$, so Assumption~\ref{ass:h01} applies from $u_t$ to $y_t$. Subtracting the definitions of $y_t$ and $u_t$ gives
\[
 y_t-u_t=\frac{a}{A_t}(z_t-z_{t-1}).
\]
Multiplying the resulting upper-model inequality by $A_t$ therefore yields
\begin{align}
 A_tF(y_t)
 \le{}&A_tF(u_t)
 +a\ip{\nabla f(u_t)}{z_t-z_{t-1}}+\frac{\mathcal H(u_t)a^2}{A_t}\|z_t-z_{t-1}\|^2.
 \label{eq:app-local-progress}
\end{align}
Equation~\ref{eq:app-weight-control} gives $Ma^2\le A_t$, while \eqref{eq:app-curvature-ratio} gives $\mathcal H(u_t)/M\le1/16$. Thus the coefficient of the squared displacement in \eqref{eq:app-local-progress} satisfies
\begin{equation}
 \frac{\mathcal H(u_t)a^2}{A_t}\le\frac{\mathcal H(u_t)}{M}\le\frac1{16}.
 \label{eq:app-quadratic-coefficient}
\end{equation}

\paragraph{Convex coupling.}
We next relate $F(u_t)$ to the previous output gap. Convexity at $u_t$ gives
\[
 F(u_t)-F(y_{t-1})\le\ip{\nabla f(u_t)}{u_t-y_{t-1}},
 \qquad
 F(u_t)\le\ip{\nabla f(u_t)}{u_t-x^\star}.
\]
Multiplying the first inequality by $A_{t-1}$, the second by $a$, and adding them gives
\begin{align}
 A_tF(u_t)-A_{t-1}F(y_{t-1})
 &\le\ip{\nabla f(u_t)}{A_{t-1}(u_t-y_{t-1})+a(u_t-x^\star)}\nonumber\\
 &=a\ip{\nabla f(u_t)}{z_{t-1}-x^\star}.
 \label{eq:app-coupling}
\end{align}
The equality uses $A_tu_t=A_{t-1}y_{t-1}+az_{t-1}$, which is the definition of the query point. Combining \eqref{eq:app-local-progress}, \eqref{eq:app-quadratic-coefficient}, and \eqref{eq:app-coupling} yields
\begin{equation}
 A_tF(y_t)
 \le A_{t-1}F(y_{t-1})
 +a\ip{\nabla f(u_t)}{z_t-x^\star}
 +\frac1{16}\|z_t-z_{t-1}\|^2.
 \label{eq:app-objective-progress}
\end{equation}

\paragraph{Projection inequality.}
It remains to control the inner product in \eqref{eq:app-objective-progress}. Since
$z_t=\Pi_Q(z_{t-1}-a\widehat g_t)$ and $x^\star\in Q$, the optimality condition for Euclidean projection gives
\[
 \ip{z_t-z_{t-1}+a\widehat g_t}{x^\star-z_t}\ge0.
\]
Rearranging this inequality and applying the identity
$2\ip{z_{t-1}-z_t}{z_t-x^\star}
=\|z_{t-1}-x^\star\|^2-\|z_t-x^\star\|^2-\|z_t-z_{t-1}\|^2$
give
\begin{align}
 a\ip{\widehat g_t}{z_t-x^\star}
 \le{}&\frac12\|z_{t-1}-x^\star\|^2
 -\frac12\|z_t-x^\star\|^2-\frac12\|z_t-z_{t-1}\|^2.
 \label{eq:app-projection-progress}
\end{align}

\paragraph{Potential recursion.}
Substitute $\nabla f(u_t)=\widehat g_t-(\zeta_t+\beta_t)$ into \eqref{eq:app-objective-progress}, use \eqref{eq:app-projection-progress}, and add $\frac12\|z_t-x^\star\|^2$ to both sides. Splitting
$z_t-x^\star=(z_{t-1}-x^\star)+(z_t-z_{t-1})$ then gives
\begin{align*}
 \Phi_t
 \le{}&\Phi_{t-1}
 -a\ip{\zeta_t+\beta_t}{z_{t-1}-x^\star}
 -a\ip{\zeta_t+\beta_t}{z_t-z_{t-1}}-\left(\frac12-\frac1{16}\right)\|z_t-z_{t-1}\|^2.
\end{align*}
Young's inequality with coefficients $1$ and $1/4$ gives
\[
 -a\ip{\zeta_t+\beta_t}{z_t-z_{t-1}}
 \le a^2\|\zeta_t+\beta_t\|^2+\frac14\|z_t-z_{t-1}\|^2.
\]
After this substitution, the remaining coefficient of $\|z_t-z_{t-1}\|^2$ is
$1/16-1/2+1/4=-3/16\le0$. Discarding this nonpositive term proves
\[
 \Phi_t
 \le\Phi_{t-1}
 +a^2\|\zeta_t+\beta_t\|^2
 -a\ip{\zeta_t+\beta_t}{z_{t-1}-x^\star},
\]
which is exactly \eqref{eq:pathwise}.
\end{proof}

\section{Proof of Theorem~\ref{thm:phase}}\label{app:phaseproof}
\noindent
We now prove the one-phase contraction stated in Theorem~\ref{thm:phase}. The proof has four steps. First, the clipping threshold is chosen so that both the clipping bias and the conditional second moment of the centered clipped gradient have the required scale. Second, we stop the potential at the first exit from the localization region; the indicator of the stopped process is predictable, so the centered error terms remain martingale differences. Third, we apply a maximal Freedman inequality to the two martingale channels produced by the linear and quadratic error terms. Finally, the resulting uniform bound rules out an exit and converts the terminal potential bound into the desired objective-gap contraction.

Recall that
\[
 \Phi_t=A_tF(y_t)+\frac12\|z_t-x^\star\|^2,
 \qquad
 A_0=\frac{D^2}{\Delta},
 \qquad
 a=\frac{5D^2}{\Delta T}.
\]
Because $F(w)\le\Delta$, $y_0=z_0=w$, and $w,x^\star\in Q$ with $\diam(Q)=D$, the initial potential satisfies
\begin{equation}\label{eq:app-initial-potential}
 \Phi_0
 =A_0F(w)+\frac12\|w-x^\star\|^2
 \le D^2+\frac12D^2
 =\frac32D^2.
\end{equation}

\paragraph{Threshold choice.}
On the localized trajectory, Lemmas~\ref{lem:selfbound} and~\ref{lem:safe-queries} together with Assumption~\ref{ass:oracle} give
$\E_t\|g_t-\nabla f(u_t)\|^p\le\varsigma_{p,\Delta}^p$. Choose
\begin{equation}\label{eq:app-lambda}
 \Lambda_{\Delta,p}:=\max\left\{2G_\Delta,
 \left(\frac{64\varsigma_{p,\Delta}^pD}{\Delta}\right)^{1/(p-1)}\right\},
 \qquad R_{\Delta,p}:=\frac{D\Lambda_{\Delta,p}}{\Delta}.
\end{equation}
Equation~\ref{eq:app-lambda} combines two logically separate requirements: its first threshold localizes clipping around the true gradient, and its second threshold makes the clipping bias small relative to the target gap.
For the remainder of this proof, write $\Lambda:=\Lambda_{\Delta,p}$ and $R:=R_{\Delta,p}$. The first term in the maximum ensures $\Lambda\ge2G_\Delta$, which is the condition needed in Lemma~\ref{lem:clipping}. The second term is equivalent to
\begin{equation}\label{eq:app-threshold-power}
 \Lambda^{p-1}\ge\frac{64\varsigma_{p,\Delta}^pD}{\Delta}.
\end{equation}
Consequently, on every localized iteration, Lemma~\ref{lem:clipping} yields the predictable decomposition
\[
 \widehat g_t-\nabla f(u_t)=\zeta_t+\beta_t,
 \qquad \E_t\zeta_t=0.
\]
Indeed, substituting \eqref{eq:app-threshold-power} into the two moment bounds of Lemma~\ref{lem:clipping} gives
\begin{align}
 \|\beta_t\|
 &\le 2\varsigma_{p,\Delta}^p\Lambda^{1-p}
 \le\frac{\Delta}{32D},
 \label{eq:app-local-bias}\\
 v_t:=\E_t\|\zeta_t\|^2
 &\le2\varsigma_{p,\Delta}^p\Lambda^{2-p}
 =2\varsigma_{p,\Delta}^p\Lambda\Lambda^{1-p}
 \le\frac{\Delta\Lambda}{32D}.
 \label{eq:app-local-variance}
\end{align}

We also need an upper bound on the dimensionless threshold $R$. Since $\max\{x,y\}\le x+y$ for $x,y\ge0$, the definition of $\Lambda$ and the identity $G_\Delta=\sqrt{\Delta M}$, where $M=32C_\star\mathcal H_\Delta$, imply
\begin{align}
 R
 &=\frac{D\Lambda}{\Delta}\le \frac{2DG_\Delta}{\Delta}
 +\frac{D}{\Delta}
 \left(\frac{64\varsigma_{p,\Delta}^pD}{\Delta}\right)^{1/(p-1)}=2D\sqrt{\frac{M}{\Delta}}
 +64^{1/(p-1)}
 \left(\frac{\varsigma_{p,\Delta}D}{\Delta}\right)^q,
 \label{eq:app-R}
\end{align}
where $q=p/(p-1)$ and hence $1+1/(p-1)=q$.

\paragraph{Choice of the horizon.}
Set
\begin{equation}\label{eq:app-horizon}
 \ell_\rho:=\log\frac8\rho,
 \qquad
 T:=\left\lceil1024(1+R)\ell_\rho\right\rceil.
\end{equation}
The deterministic part of $R$ gives $R\ge2D\sqrt{M/\Delta}$. Since $\ell_\rho>1$, equation~\ref{eq:app-horizon} in particular implies $T\ge5D\sqrt{M/\Delta}$. Therefore
\begin{equation}\label{eq:app-phase-weight-check}
 Ma^2
 =\frac{25MD^4}{\Delta^2T^2}
 \le\frac{D^2}{\Delta}
 =A_0
 \le A_t.
\end{equation}
Condition~\eqref{eq:app-phase-weight-check} is precisely the weight condition used in Lemmas~\ref{lem:safe-queries} and~\ref{lem:one-step}.

\paragraph{Stopped martingale channels.}
Set the envelope level $B_{\rm env}:=3D^2$ and define the stopping time
\begin{equation}\label{eq:app-stopping-time}
 \tau:=\inf\{t\in\{1,\ldots,T\}:\Phi_t>B_{\rm env}\},
 \qquad \inf\varnothing:=T+1.
\end{equation}
The stopping rule in \eqref{eq:app-stopping-time} localizes every stochastic estimate to iterations for which the safe-query lemma is already valid.
For every $t$, let $I_t:=\mathbf1\{t-1<\tau\}$. The event $\{t-1<\tau\}$ is determined by $\Phi_0,\ldots,\Phi_{t-1}$, so $I_t$ is $\mathcal F_{t-1}$-measurable. Moreover, $I_t=1$ implies $\Phi_{t-1}\le B_{\rm env}$, and hence Lemmas~\ref{lem:safe-queries}--\ref{lem:one-step} and bounds \eqref{eq:app-local-bias}--\eqref{eq:app-local-variance} are valid at iteration $t$.

Multiplying \eqref{eq:pathwise} by $I_t$ and using
$\|\zeta_t+\beta_t\|^2\le2\|\zeta_t\|^2+2\|\beta_t\|^2$, we obtain
\begin{align}
 I_t(\Phi_t-\Phi_{t-1})
 \le{}&2a^2I_tv_t
 +2a^2I_t(\|\zeta_t\|^2-v_t)
 +2a^2I_t\|\beta_t\|^2\nonumber\\
 &-aI_t\ip{\zeta_t}{z_{t-1}-x^\star}
 -aI_t\ip{\beta_t}{z_{t-1}-x^\star}.
 \label{eq:app-stopped-rec}
\end{align}
The reason for adding and subtracting $v_t$ is that the second term on the right is now centered conditionally on $\mathcal F_{t-1}$. The fourth term is centered for the same reason, while the remaining three terms are predictable or deterministic upper bounds.

\paragraph{Predictable contributions.}
Because $z_{t-1},x^\star\in Q$, we have $\|z_{t-1}-x^\star\|\le D$. Using \eqref{eq:app-local-bias}, $Ta=5D^2/\Delta$, and $I_t\le1$ gives
\begin{align}
 \sum_{t=1}^T aI_tD\|\beta_t\|
 &\le TaD\frac{\Delta}{32D}
 =\frac5{32}D^2,
 \label{eq:app-det1}\\
 2\sum_{t=1}^Ta^2I_t\|\beta_t\|^2
 &\le2Ta^2\frac{\Delta^2}{32^2D^2}
 =\frac{50}{1024T}D^2.
 \label{eq:app-det2}
\end{align}
Similarly, \eqref{eq:app-local-variance} and $R=D\Lambda/\Delta$ yield
\begin{align}
 2\sum_{t=1}^Ta^2I_tv_t
 &\le2Ta^2\frac{\Delta\Lambda}{32D}
 =\frac{50}{32}\frac{R}{T}D^2.
 \label{eq:app-det3}
\end{align}
Equations~\ref{eq:app-horizon}, \ref{eq:app-det2}, and~\ref{eq:app-det3} imply
\begin{equation}\label{eq:app-small-predictable}
 \frac{50}{1024T}D^2
 +\frac{50}{32}\frac{R}{T}D^2
 <\frac1{100}D^2.
\end{equation}
Indeed, $T\ge1024\ell_\rho$ and $R/T\le[1024\ell_\rho]^{-1}$, while $\ell_\rho>1$.

\paragraph{Centered contributions.}
For $j\le T$, define the stopped scalar martingales
\[
 M_j^{(1)}:=-\sum_{t=1}^j aI_t
 \ip{\zeta_t}{z_{t-1}-x^\star},
 \qquad
 M_j^{(2)}:=2a^2\sum_{t=1}^jI_t(\|\zeta_t\|^2-v_t).
\]
Both are martingales because $I_t$ and $z_{t-1}$ are $\mathcal F_{t-1}$-measurable, $\E_t\zeta_t=0$, and $v_t=\E_t\|\zeta_t\|^2$. For the first channel, $\|\zeta_t\|\le2\Lambda$ gives
\begin{align}
 |\Delta M_t^{(1)}|
 &\le2a\Lambda D
 =\frac{10D^2R}{T},
 \label{eq:app-m1-increment}\\
 V_T^{(1)}
 &:=\sum_{t=1}^T\E_t[(\Delta M_t^{(1)})^2]
 \le a^2D^2\sum_{t=1}^TI_tv_t
 \le\frac{25}{32}\frac{D^4R}{T}.
 \label{eq:app-m1}
\end{align}
For the quadratic channel, both $\|\zeta_t\|^2$ and $v_t$ lie in $[0,4\Lambda^2]$, so
$|\|\zeta_t\|^2-v_t|\le4\Lambda^2$. In addition,
$\|\zeta_t\|^4\le4\Lambda^2\|\zeta_t\|^2$. Therefore
\begin{align}
 |\Delta M_t^{(2)}|
 &\le8a^2\Lambda^2
 =\frac{200D^2R^2}{T^2},
 \label{eq:app-m2-increment}\\
 V_T^{(2)}
 &:=\sum_{t=1}^T\E_t[(\Delta M_t^{(2)})^2]\le4a^4\sum_{t=1}^TI_t\E_t\|\zeta_t\|^4
 \le16a^4\Lambda^2\sum_{t=1}^TI_tv_t
 \le\frac{625}{2}\frac{D^4R^3}{T^3}.
 \label{eq:app-m2}
\end{align}

We use the following two-sided maximal form of Freedman's inequality: if a scalar martingale has increments bounded in absolute value by $L$ and predictable quadratic variation at most $V$, then, for every $u>0$,
\[
 \Prb\left\{\max_{j\le T}|M_j|\ge u\right\}
 \le2\exp\left(-\frac{u^2}{2(V+Lu/3)}\right).
\]
Take $u=D^2/8$. From \eqref{eq:app-horizon},
\[
 \frac{R}{T}\le\frac1{1024\ell_\rho},
 \qquad
 \frac{R^2}{T^2}\le\frac1{1024^2\ell_\rho^2},
 \qquad
 \frac{R^3}{T^3}\le\frac1{1024^3\ell_\rho^3}.
\]
Substitution of the linear increment and variation bounds
\eqref{eq:app-m1-increment} and~\eqref{eq:app-m1}, together with the quadratic bounds
\eqref{eq:app-m2-increment} and~\eqref{eq:app-m2}, shows, for each $i\in\{1,2\}$,
\[
 \frac{u^2}{2(V_T^{(i)}+L_i u/3)}\ge\ell_\rho.
\]
For example, for the linear channel this follows from
\[
 2\left(\frac{25}{32\cdot1024}
 +\frac{10}{24\cdot1024}\right)\le\frac1{64};
\]
the quadratic channel has still smaller normalized constants because it contains $1024^{-2}$ and $1024^{-3}$. Hence
\begin{equation}\label{eq:app-freedman-events}
 \Prb\left\{\max_{j\le T}|M_j^{(i)}|>\frac{D^2}{8}\right\}
 \le2e^{-\ell_\rho}=\frac\rho4,
 \qquad i=1,2.
\end{equation}

\paragraph{Closing the stopping argument.}
Let $\mathcal E$ be the event on which both martingales in \eqref{eq:app-freedman-events} remain within $D^2/8$. By a union bound, $\Prb(\mathcal E)\ge1-\rho/2\ge1-\rho$. For any $j\le T$, the definition of $I_t$ gives the telescoping identity
\[
 \sum_{t=1}^j I_t(\Phi_t-\Phi_{t-1})
 =\Phi_{j\wedge\tau}-\Phi_0.
\]
On $\mathcal E$, sum \eqref{eq:app-stopped-rec}, use \eqref{eq:app-initial-potential}, \eqref{eq:app-det1}, \eqref{eq:app-small-predictable}, and the two martingale bounds, and obtain, simultaneously for all $j\le T$,
\[
 \Phi_{j\wedge\tau}
 \le\frac32D^2+\frac5{32}D^2+\frac18D^2+\frac18D^2+\frac1{100}D^2<3D^2=B_{\rm env}.
\]
If $\tau\le T$, choosing $j=\tau$ would give $\Phi_\tau<B_{\rm env}$, contradicting the definition of $\tau$. Thus, on $\mathcal E$, necessarily $\tau=T+1$, and therefore $\Phi_T\le B_{\rm env}$.

Finally,
\[
 A_T=A_0+Ta
 =\frac{D^2}{\Delta}+\frac{5D^2}{\Delta}
 =\frac{6D^2}{\Delta}.
\]
Since the distance term in $\Phi_T$ is nonnegative,
\[
 F(y_T)\le\frac{\Phi_T}{A_T}\le\frac\Delta2.
\]
This conclusion holds on an event of probability at least $1-\rho$, which proves Theorem~\ref{thm:phase}.

\section{Proofs of the Main Convergence Results}\label{app:restart}
This section derives Theorems~\ref{thm:convex} and~\ref{thm:strong} from the one-phase result. We first obtain a common bound for the cost of a phase with gap level $\Delta$. We then sum that bound with a fixed set diameter for the convex result and with shrinking diameters for the strongly convex result. The only slightly nonstandard point is the $H_1$ crossover: after $H_1\Delta_s$ falls below $H_0$, the gap-dependent curvature must no longer be charged once per remaining phase.

\paragraph{Cost of one phase.}
By \eqref{eq:app-R}, $M=32C_\star\mathcal H_\Delta$, and
$T=\lceil1024(1+R_{\Delta,p})\log(8/\rho)\rceil$, the cost of one phase is bounded in terms of $\varsigma_{p,\Delta}$. To make this dependence explicit, recall that
\[
 \varsigma_{p,\Delta}^p
 =\sigma_0^p+\sigma_1^pG_\Delta^p
 +\sigma_2^p(4\Delta)^{p/2},
 \qquad
 G_\Delta^p
 =\bigl(32C_\star\Delta\mathcal H_\Delta\bigr)^{p/2}.
\]
Since $q/p=1/(p-1)\ge1$, the elementary inequality
$(u+v+w)^r\le3^{r-1}(u^r+v^r+w^r)$, valid for $r\ge1$ and nonnegative $u,v,w$, gives
\[
\left(\frac{\varsigma_{p,\Delta}D}{\Delta}\right)^q
\le3^{q/p-1}\left[
\left(\frac{\sigma_0D}{\Delta}\right)^q
+32^{q/2}\left(\sigma_1D\sqrt{C_\star\left(\frac{H_0}{\Delta}+H_1\right)}\right)^q
+2^q\left(\frac{\sigma_2D}{\sqrt\Delta}\right)^q\right].
\]
Consequently, for a constant $C_p>0$ depending only on $p$,
\begin{equation}\label{eq:app-phase-rate}
\begin{aligned}
T\le C_p\log\frac8\rho\Bigg[&1+D\sqrt{C_\star\!\left(\frac{H_0}{\Delta}+H_1\right)}
+\left(\frac{\sigma_0D}{\Delta}\right)^q\\
&+\left(\sigma_1D\sqrt{C_\star\!\left(\frac{H_0}{\Delta}+H_1\right)}\right)^q
+\left(\frac{\sigma_2D}{\sqrt\Delta}\right)^q\Bigg].
\end{aligned}
\end{equation}
For instance, $C_p=1024\,64^{q/p}3^{q/p-1}32^{q/2}$ is valid; these numerical factors are proof bounds, not tuning recommendations.

\paragraph{Restart events and geometric sums.}
Let $\Delta_s=2^{-s}\Delta_0$ and $S=\lceil\log_2(\Delta_0/\varepsilon)\rceil$. We repeatedly use
the following explicit sums. If $S=0$, then
$\varepsilon=\Delta_0$, the restart loop is empty, and
$w_S=w_0$ already satisfies $F(w_S)\le\varepsilon$. Thus all sums below are empty and both convergence statements are immediate. For the remainder of the restart analysis, assume $S\ge1$. The ceiling definition implies
\[
 2^{S-1}<\frac{\Delta_0}{\varepsilon}\le2^S.
\]
For every $a>0$, substituting
$\Delta_s=2^{-s}\Delta_0$ into the increasing geometric sum gives
\begin{align}
 \sum_{s=0}^{S-1}\Delta_s^{-a}
 &=\Delta_0^{-a}\sum_{s=0}^{S-1}2^{as}=\Delta_0^{-a}\frac{2^{aS}-1}{2^a-1}\le\frac{2^a}{2^a-1}\,\varepsilon^{-a}.
 \label{eq:app-geosum-increasing}
\end{align}
The last inequality uses
$2^S<2\Delta_0/\varepsilon$, which follows from
$2^{S-1}<\Delta_0/\varepsilon$. Likewise, the decreasing geometric sum is
\begin{align}
 \sum_{s=0}^{S-1}\Delta_s^a
 &=\Delta_0^a\sum_{s=0}^{S-1}2^{-as}=\Delta_0^a\frac{1-2^{-aS}}{1-2^{-a}}\le\frac{\Delta_0^a}{1-2^{-a}}.
 \label{eq:app-geosum-decreasing}
\end{align}
Finally, $\sum_{s=0}^{S-1}1=S$ because there are exactly $S$ phases.
A summable failure allocation, for example
$\rho_s=6\rho/[\pi^2(s+1)^2]$, gives
$\sum_{s\ge0}\rho_s=\rho$. To justify the restart induction explicitly, let
\[
 \mathcal E_s:=\{F(w_j)\le\Delta_j\text{ for }j=0,\ldots,s\}.
\]
The initial assumption gives $\mathcal E_0$. Conditional on $\mathcal E_s$, the set $Q_s$ contains $w_s$ and $x^\star$, so Theorem~\ref{thm:phase} gives
\[
 \Prb(\mathcal E_{s+1}^{\,c}\cap\mathcal E_s)\le\rho_s.
\]
A union bound over $s=0,\ldots,S-1$ therefore yields
\[
 \Prb(\mathcal E_S)
 \ge1-\sum_{s=0}^{S-1}\rho_s
 \ge1-\rho.
\]
On $\mathcal E_S$, $F(w_S)\le\Delta_S\le\varepsilon$. Moreover,
$\log(8/\rho_s)=O(\log(1/\rho)+\log(s+1))$; these confidence and phase-index logarithms are precisely the factors absorbed by $\widetilde O_{\rho,p}$.

\paragraph{A crossover lemma.}
The technical note used the valid but coarse inequality $\sqrt{a+b}\le\sqrt a+\sqrt b$, which yields an $H_1$ factor on every phase. The following split is strictly sharper whenever the algorithm crosses into the $H_0$-dominated regime.
We assign the equality case $H_1\Delta_s=H_0$ to the $H_0$-dominated regime and use $J_H$ as defined in Section~\ref{sec:convex-case}.

\begin{lemma}[$H_1$ crossover sums]\label{lem:crossover}
For every $r\ge1/2$,
\begin{equation}\label{eq:crossover-r}
 \sum_{s=0}^{S-1}\left(\frac{H_0}{\Delta_s}+H_1\right)^r
 \le C_r\left[\left(\frac{H_0}{\varepsilon}\right)^r+H_1^rJ_H\right].
\end{equation}
\end{lemma}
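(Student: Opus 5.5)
We split the phases according to whether $H_1\Delta_s>H_0$ (the $H_1$-dominated regime) or $H_1\Delta_s\le H_0$ (the $H_0$-dominated regime, which includes the equality case as stipulated). The constant $C_r$ may depend only on $r$, for instance $C_r=2^r\max\{1,2^r/(2^r-1)\}$.

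\emph{Pointwise bound.} In each phase, $H_0/\Delta_s+H_1\le2\max\{H_0/\Delta_s,H_1\}$. Since $r>0$, this gives
\[
 \left(\frac{H_0}{\Delta_s}+H_1\right)^r\le2^r\max\left\{\frac{H_0^r}{\Delta_s^r},H_1^r\right\}.
\]
In a phase with $H_1\Delta_s>H_0$, the maximum is $H_1^r$. In a phase with $H_1\Delta_s\le H_0$, it is $(H_0/\Delta_s)^r$.

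\emph{Counting the $H_1$-dominated phases.} Because $\Delta_s=2^{-s}\Delta_0$ is decreasing in $s$, the condition $H_1\Delta_s>H_0$ holds for an initial segment of indices $s$.
\begin{itemize}
\item If $H_1=0$, the condition never holds, so the segment is empty and $J_H=0$.
\item If $H_0=0<H_1$, the condition holds for every phase, and $J_H=S$ accounts for all $S$ phases.
\item If $H_0,H_1>0$, the condition is equivalent to $s<\log_2(H_1\Delta_0/H_0)$. The number of such $s\ge0$ is at most $\lceil\log_2^+(H_1\Delta_0/H_0)\rceil$, and it is also at most $S$. Hence it is at most $J_H$.
\end{itemize}
In every case, the $H_1$-dominated phases contribute at most $2^rH_1^rJ_H$.

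\emph{The $H_0$-dominated phases.} Here we discard the regime restriction and bound the whole sum using the increasing geometric sum \eqref{eq:app-geosum-increasing} with exponent $a=r>0$:
\[
 \sum_{s=0}^{S-1}\frac{H_0^r}{\Delta_s^r}\le\frac{2^r}{2^r-1}\left(\frac{H_0}{\varepsilon}\right)^r.
\]
This needs $S\ge1$; when $S=0$ both sides of \eqref{eq:crossover-r} are trivially handled because the left-hand side is empty. Adding the two contributions gives \eqref{eq:crossover-r}.

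The only delicate point is the counting step, in particular getting the endpoint conventions and the ceiling exactly right. The hypothesis $r\ge1/2$ is not actually needed beyond $r>0$. Presumably it reflects the intended applications with $r=1/2$ and $r=q/2$.
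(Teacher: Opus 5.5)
Your proof is correct and follows the paper's argument: the same split into phases with $H_1\Delta_s>H_0$ and $H_1\Delta_s\le H_0$, the same count of at most $J_H$ indices in the first group, and the same full geometric sum \eqref{eq:app-geosum-increasing} for the second, giving the identical constant $\max\{2^r,2^{2r}/(2^r-1)\}$. You are also right that only $r>0$ is used.
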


\begin{proof}
Fix $r\ge1/2$. If $H_1=0$, substituting $a=r$ into
\eqref{eq:app-geosum-increasing} proves the claim. If $H_0=0$, every summand equals $H_1^r$ and $J_H=S$, so both sides have the same dependence on $H_1^rS$. Assume now that $H_0>0$ and $H_1>0$, and split the phase indices into
$I_1=\{s:H_1>H_0/\Delta_s\}$ and
$I_0=\{s:H_1\le H_0/\Delta_s\}$. Since
$\Delta_s=2^{-s}\Delta_0$, the indices in $I_1$ satisfy
$s<\log_2(H_1\Delta_0/H_0)$, so their number is at most $J_H$. On $I_1$,
\[
 \sum_{s\in I_1}
 \left(\frac{H_0}{\Delta_s}+H_1\right)^r
 \le 2^rH_1^r|I_1|
 \le2^rH_1^rJ_H.
\]
On $I_0$, including the equality case,
$H_1\le H_0/\Delta_s$, and hence
\begin{align*}
 \sum_{s\in I_0}
 \left(\frac{H_0}{\Delta_s}+H_1\right)^r
 &\le2^rH_0^r\sum_{s\in I_0}\Delta_s^{-r}\le2^rH_0^r\sum_{s=0}^{S-1}\Delta_s^{-r}\le\frac{2^{2r}}{2^r-1}
 \left(\frac{H_0}{\varepsilon}\right)^r,
\end{align*}
where the last line applies \eqref{eq:app-geosum-increasing} with $a=r$. Adding the two regions proves \eqref{eq:crossover-r} with, for example,
$C_r=\max\{2^r,2^{2r}/(2^r-1)\}$.
\end{proof}

\paragraph{Proof of Theorem~\ref{thm:convex}.}
In the convex setting, the same set $Q$ and diameter $D$ are used in every phase. Summing the constant term in \eqref{eq:app-phase-rate} gives $S$. For the deterministic curvature term, Lemma~\ref{lem:crossover} with $r=1/2$ gives
\begin{align}
 \sum_{s=0}^{S-1}
 D\sqrt{C_\star\left(\frac{H_0}{\Delta_s}+H_1\right)}
 \le C D\left(
 \sqrt{\frac{H_0}{\varepsilon}}+\sqrt{H_1}\,J_H
 \right).
 \label{eq:app-cvx-deterministic-sum}
\end{align}
The additive and gap-dependent noise terms are direct geometric sums:
\begin{align}
 \sum_{s=0}^{S-1}\left(\frac{\sigma_0D}{\Delta_s}\right)^q
 &=(\sigma_0D)^q\Delta_0^{-q}
 \frac{2^{qS}-1}{2^q-1}
 \le\frac{2^q}{2^q-1}
 \left(\frac{\sigma_0D}{\varepsilon}\right)^q,
 \label{eq:app-cvx-additive-sum}\\
 \sum_{s=0}^{S-1}\left(\frac{\sigma_2D}{\sqrt{\Delta_s}}\right)^q
 &=(\sigma_2D)^q\Delta_0^{-q/2}
 \frac{2^{qS/2}-1}{2^{q/2}-1}\le\frac{2^{q/2}}{2^{q/2}-1}
 \left(\frac{\sigma_2D}{\sqrt{\varepsilon}}\right)^q.
 \label{eq:app-cvx-gap-sum}
\end{align}
Both last inequalities use
$2^S<2\Delta_0/\varepsilon$. Thus the constants depend on $p$ through $q$, but not on $\Delta_0$ or $\varepsilon$.
For the gradient-dependent term, apply Lemma~\ref{lem:crossover} with $r=q/2\ge1$:
\begin{align}
 \sum_{s=0}^{S-1}
 \left[
 \sigma_1D\sqrt{C_\star\left(\frac{H_0}{\Delta_s}+H_1\right)}
 \right]^q
 &\le
 C_p(\sigma_1D)^q
 \left[
 \left(\frac{H_0}{\varepsilon}\right)^{q/2}
 H_1^{q/2}J_H
 \right]\nonumber\\
 &\le
 C_p\left(
 \sigma_1D\left[
 \sqrt{\frac{H_0}{\varepsilon}}
 +\sqrt{H_1}\,J_H^{1/q}
 \right]\right)^q.
 \label{eq:app-cvx-multiplicative-sum}
\end{align}
The last inequality uses $x^q+y^q\le(x+y)^q$ for $x,y\ge0$ with
$x=\sqrt{H_0/\varepsilon}$ and
$y=\sqrt{H_1}\,J_H^{1/q}$. Combining the deterministic sum
\eqref{eq:app-cvx-deterministic-sum}, the additive sum
\eqref{eq:app-cvx-additive-sum}, the gap-dependent sum
\eqref{eq:app-cvx-gap-sum}, and the gradient-dependent sum
\eqref{eq:app-cvx-multiplicative-sum} with the restart event above gives exactly the oracle-complexity bound and the success probability stated in Theorem~\ref{thm:convex}. This sharper accounting changes only the restart summation; the one-phase theorem is unchanged.

\paragraph{Proof of Theorem~\ref{thm:strong}.}
On a successful phase, $F(w_s)\le\Delta_s$ and strong convexity imply
\[
 \frac{\mu}{2}\|w_s-x^\star\|^2
 \le F(w_s)\le\Delta_s,
\]
and hence $\|w_s-x^\star\|\le\sqrt{2\Delta_s/\mu}$. Set
\[
 Q_s=B\left(w_s,\sqrt{\frac{2\Delta_s}{\mu}}\right),
 \qquad D_s=2\sqrt{\frac{2\Delta_s}{\mu}}.
\]
Then $w_s,x^\star\in Q_s$, and $D_s$ is the diameter of $Q_s$, as required by Theorem~\ref{thm:phase}. Substitution into the deterministic phase term gives
\[
 D_s\sqrt{C_\star\left(\frac{H_0}{\Delta_s}+H_1\right)}
 =2\sqrt{\frac{2C_\star}{\mu}}
 \sqrt{H_0+H_1\Delta_s}
 \le2\sqrt{\frac{2C_\star}{\mu}}
 \left(\sqrt{H_0}+\sqrt{H_1\Delta_s}\right).
\]
The first part is constant across phases and therefore costs $S$. For the second part, substituting
$\Delta_s=2^{-s}\Delta_0$ gives
\[
 \sum_{s=0}^{S-1}\sqrt{\Delta_s}
 =\sqrt{\Delta_0}
 \frac{1-2^{-S/2}}{1-2^{-1/2}}
 \le\frac{\sqrt{\Delta_0}}{1-2^{-1/2}}.
\]
Thus the summed deterministic contribution is
\[
 O\left(
 \sqrt{\frac{H_0}{\mu}}\,S
 +\sqrt{\frac{H_1\Delta_0}{\mu}}
 \right),
\]
up to the absolute factor depending on $C_\star$.

For additive noise,
\[
 \left(\frac{\sigma_0D_s}{\Delta_s}\right)^q
 =\left(\frac{2\sqrt2\,\sigma_0}
 {\sqrt{\mu\Delta_s}}\right)^q,
\]
and substituting $\Delta_s=2^{-s}\Delta_0$ gives
\[
 \sum_{s=0}^{S-1}
 \left(\frac{\sigma_0D_s}{\Delta_s}\right)^q
 =\left(\frac{2\sqrt2\,\sigma_0}{\sqrt\mu}\right)^q
 \Delta_0^{-q/2}
 \frac{2^{qS/2}-1}{2^{q/2}-1}
 \le C_p
 \left(\frac{\sigma_0}{\sqrt{\mu\varepsilon}}\right)^q.
\]
Here the last inequality again uses
$2^S<2\Delta_0/\varepsilon$.
For gap-dependent noise,
\[
 \left(\frac{\sigma_2D_s}{\sqrt{\Delta_s}}\right)^q
 =\left(\frac{2\sqrt2\,\sigma_2}{\sqrt\mu}\right)^q
\]
is constant across phases. Summing exactly $S$ identical terms therefore contributes
\[
 \left(\frac{2\sqrt2\,\sigma_2}{\sqrt\mu}\right)^qS.
\]

For the gradient-dependent contribution, substitution of $D_s$ gives
\[
 \left[\sigma_1D_s\sqrt{C_\star\left(\frac{H_0}{\Delta_s}+H_1\right)}\right]^q
 =\left(\frac{2\sqrt{2C_\star}\,\sigma_1}{\sqrt\mu}\right)^q
 (H_0+H_1\Delta_s)^{q/2}.
\]
Since $q/2\ge1$, $(x+y)^{q/2}\le2^{q/2-1}(x^{q/2}+y^{q/2})$. Therefore
\[
 \sum_{s=0}^{S-1}\Delta_s^{q/2}
 =\Delta_0^{q/2}
 \frac{1-2^{-qS/2}}{1-2^{-q/2}}
 \le\frac{\Delta_0^{q/2}}{1-2^{-q/2}},
\]
while $\sum_{s=0}^{S-1}H_0^{q/2}=H_0^{q/2}S$. Using these two identities,
\begin{align}
 \sum_{s=0}^{S-1}
 \left[\sigma_1D_s\sqrt{C_\star\left(\frac{H_0}{\Delta_s}+H_1\right)}\right]^q
 &\le
 C_p\left(\frac{\sigma_1}{\sqrt\mu}\right)^q
 \left(H_0^{q/2}S+(H_1\Delta_0)^{q/2}\right)\nonumber\\
 &\le
 C_p\left(
 \frac{\sigma_1}{\sqrt\mu}
 \left[\sqrt{H_0}\,S^{1/q}
 +\sqrt{H_1\Delta_0}\right]\right)^q.
 \label{eq:app-sc-multiplicative-sum}
\end{align}
The last line again uses $x^q+y^q\le(x+y)^q$. Equation~\ref{eq:app-sc-multiplicative-sum} is the gradient-dependent contribution in Theorem~\ref{thm:strong}. Adding the constant phase cost $S$, the deterministic contribution, and the three stochastic sums proves the complexity stated there. The restart-event argument gives success probability at least $1-\rho$. Finally, if $\sigma_0=0$, every remaining phase contribution is either constant in $s$ or summable from the initial phase, which explains the logarithmic target-accuracy dependence emphasized in the main text.

\section{Power-Law Noise and Finite-Sum Derivation}\label{app:structural}
\subsection{Power-law phase diagram}
\paragraph{Localized phase scale.}
We prove Theorem~\ref{thm:alpha} by isolating the only part of the one-phase cost that changes under \eqref{eq:Falpha}. On the localized trajectory, Lemma~\ref{lem:safe-queries} gives $F(u_t)\le4\Delta$. Hence
\[
 \E_t\|g(u_t,\xi_t)-\nabla f(u_t)\|^p
 \le\tau_\alpha^p(4\Delta)^\alpha,
\]
so the phase moment scale may be chosen as
\[
 \varsigma_{p,\Delta}
 =4^{\alpha/p}\tau_\alpha\Delta^{\alpha/p}.
\]
Substitution into the stochastic term of \eqref{eq:app-R} gives
\[
 \left(\frac{\varsigma_{p,\Delta}D}{\Delta}\right)^q
 =4^{\alpha q/p}(\tau_\alpha D)^q
 \Delta^{-q+\alpha q/p}.
\]
Because $q=p/(p-1)$ and $q/p=1/(p-1)$,
\[
 -q+\frac{\alpha q}{p}
 =\frac{\alpha-p}{p-1}.
\]
Therefore, in the convex case, the stochastic cost of phase $s$ is, up to constants depending only on $p$ and $\alpha$,
\begin{equation}\label{eq:alpha-phase-cvx}
 (\tau_\alpha D)^q
 \Delta_s^{(\alpha-p)/(p-1)}.
\end{equation}
\paragraph{Convex restart.}
There are three cases:
\begin{itemize}
\item If $\alpha<p$, set
$a=(p-\alpha)/(p-1)>0$. Substituting
$\Delta_s=2^{-s}\Delta_0$ gives
\[
 \sum_{s=0}^{S-1}\Delta_s^{-a}
 =\Delta_0^{-a}\frac{2^{aS}-1}{2^a-1}
 \le\frac{2^a}{2^a-1}\varepsilon^{-a}.
\]
Thus the last phases determine the order
$\varepsilon^{-(p-\alpha)/(p-1)}$.
\item If $\alpha=p$, the exponent is zero. Every phase has the same stochastic cost, so the sum contributes $S=O(\log(\Delta_0/\varepsilon))$.
\item If $\alpha>p$, set
$a=(\alpha-p)/(p-1)>0$. Then
\[
 \sum_{s=0}^{S-1}\Delta_s^a
 =\Delta_0^a\frac{1-2^{-aS}}{1-2^{-a}}
 \le\frac{\Delta_0^a}{1-2^{-a}}.
\]
Thus the initial phases determine the order
$\Delta_0^{(\alpha-p)/(p-1)}$.
\end{itemize}
These are exactly the three branches of $\mathcal K_p(\alpha)$ in Theorem~\ref{thm:alpha}.

\paragraph{Strongly convex restart.}
Under strong convexity, the phase diameter is
$D_s=2\sqrt{2\Delta_s/\mu}$. Consequently,
\[
 D_s^q
 =2^{3q/2}\mu^{-q/2}\Delta_s^{q/2},
\]
and multiplying \eqref{eq:alpha-phase-cvx} by the additional factor
$\Delta_s^{q/2}/(\mu^{q/2}D^q)$ changes the exponent to
\[
 \frac{\alpha-p}{p-1}+\frac q2
 =\frac{\alpha-p/2}{p-1}.
\]
Thus the strongly convex phase cost is
\begin{equation}\label{eq:alpha-phase}
 C_{p,\alpha}
 \left(\frac{\tau_\alpha}{\sqrt\mu}\right)^q
 \Delta_s^{(\alpha-p/2)/(p-1)}.
\end{equation}
For $\alpha<p/2$, substituting
$a=(p/2-\alpha)/(p-1)$ into
\eqref{eq:app-geosum-increasing} gives
\[
 \sum_{s=0}^{S-1}
 \Delta_s^{(\alpha-p/2)/(p-1)}
 \le\frac{2^a}{2^a-1}\varepsilon^{-a}.
\]
For $\alpha=p/2$, the exponent is zero and the sum equals $S$. For
$\alpha>p/2$, substituting
$a=(\alpha-p/2)/(p-1)$ into
\eqref{eq:app-geosum-decreasing} gives
\[
 \sum_{s=0}^{S-1}
 \Delta_s^{(\alpha-p/2)/(p-1)}
 \le\frac{\Delta_0^a}{1-2^{-a}}.
\]
These are the three branches of $\mathcal K_{p/2}(\alpha)$.

\paragraph{Total complexity.}
The deterministic part of each phase is unchanged, and the restart-event argument from Appendix~\ref{app:restart} supplies the probability $1-\rho$. Returning to the claim of Theorem~\ref{thm:alpha}, the convex and strongly convex stochastic contributions are therefore exactly the two bounds stated there, which completes the proof.

\subsection{Finite-sum interpolation}
\paragraph{Component-growth assumptions.}
Let $f(x)=\E_\xi f_\xi(x)$, $g(x,\xi)=\nabla f_\xi(x)$, and fix $x^\star\in\arg\min f$. Suppose
\begin{equation}\label{eq:component-growth}
 \E\|g(x^\star,\xi)\|^p\le a_0^p,
 \qquad
 \E\|g(x,\xi)-g(x^\star,\xi)\|^p
 \le a_1^p\|\nabla f(x)\|^p+a_2^pF(x)^{p/2}.
\end{equation}
Then Assumption~\ref{ass:oracle} holds with
\begin{equation}\label{eq:finite-constants}
 \sigma_0^p=2^{p-1}a_0^p,\qquad
 \sigma_1^p=2^{2p-1}a_1^p,\qquad
 \sigma_2^p=2^{2p-1}a_2^p.
\end{equation}
\paragraph{Centered decomposition.}
We verify the reduction directly. Let
$A(x,\xi)=g(x,\xi)-g(x^\star,\xi)$ and $B(\xi)=g(x^\star,\xi)$. Since $\E B=\nabla f(x^\star)=0$ and $\E A=\nabla f(x)$,
\[
 g(x,\xi)-\nabla f(x)=(A-\E A)+B.
\]
\paragraph{Conditional moment bound.}
The equality $\E B=0$ uses differentiability of $f$ and first-order optimality at the unconstrained minimizer $x^\star$. For $p\in(1,2]$,
$\|u+v\|^p\le2^{p-1}(\|u\|^p+\|v\|^p)$, so
\[
 \E\|g(x,\xi)-\nabla f(x)\|^p\le2^{p-1}\E\|A-\E A\|^p+2^{p-1}\E\|B\|^p.
\]
Again by the same inequality and Jensen's inequality,
\[
 \E\|A-\E A\|^p
 \le2^{p-1}(\E\|A\|^p+\|\E A\|^p)
 \le2^p\E\|A\|^p.
\]
The second inequality follows because
$\|\E A\|^p\le\E\|A\|^p$. Substituting
\eqref{eq:component-growth} yields
\[
 \E\|g(x,\xi)-\nabla f(x)\|^p
 \le2^{p-1}a_0^p+2^{2p-1}a_1^p\|\nabla f(x)\|^p
 +2^{2p-1}a_2^pF(x)^{p/2},
\]
which proves \eqref{eq:finite-constants}. For a finite-sum oracle sampled independently at every iteration, the same calculation holds conditionally on $\mathcal F_{t-1}$ because the query point is then fixed and the new index is fresh. Hence the resulting constants satisfy the conditional formulation of Assumption~\ref{ass:oracle}.

\paragraph{Interpolation endpoint.}
In particular, exact interpolation gives $g(x^\star,\xi)=0$ almost surely, so $a_0=\sigma_0=0$ under \eqref{eq:component-growth}. Theorem~\ref{thm:strong} then has logarithmic target-accuracy dependence, even when $p<2$.

If
$\E\|g(x,\xi)-g(x^\star,\xi)\|^p\le L_p^{p/2}F(x)^{p/2}$ and
$\E\|g(x^\star,\xi)\|^p\le\sigma_{\star,p}^p$, then
$\sigma_1=0$, $\sigma_0^p\le2^{p-1}\sigma_{\star,p}^p$, and
$\sigma_2^p\le2^{2p-1}L_p^{p/2}$. At $p=2$ this is the familiar component-difference expected-smoothness scale up to convention-dependent constants \citep{gower2019sgd}; for $p<2$ it should be read as a finite-$p$ moment analogue rather than as a unique standard definition.

\section{Lower Bounds on the Smooth Subclass}\label{app:lower}
Throughout this section $H_1=0$, hence Assumption~\ref{ass:h01} is the standard global smoothness upper model with $L=2H_0$. Let $q=p/(p-1)$. The purpose of restricting to this subclass is logical: a lower bound on a subclass is automatically a lower bound for the full $(H_0,H_1)$ class. We treat deterministic curvature, additive finite-$p$ noise, and gap-dependent noise separately so that each term in Theorems~\ref{thm:convex} and~\ref{thm:strong} can be compared with a matching hard instance.

\subsection{Deterministic curvature}\label{app:lower-deterministic}
The standard first-order lower bounds \citep{nesterov2004} imply, in sufficiently high dimension,
\[
 N=\Omega\left(D\sqrt{\frac{H_0}{\varepsilon}}\right)
\]
for convex objectives and
\[
 N=\Omega\left(\sqrt{\frac{H_0}{\mu}}\log\frac{\Delta_0}{\varepsilon}\right)
\]
for $\mu$-strongly convex objectives in the nontrivial condition-number regime. These match the corresponding deterministic parts of Theorems~\ref{thm:convex}--\ref{thm:strong} when $H_1=0$.

\subsection{Additive finite-$p$ noise}\label{app:lower-additive}
We use a two-point testing construction in which information about the sign of the minimizer appears only through a rare oracle response. The rarity of that response is chosen to saturate the $p$th-moment constraint.

For $v\in\{-1,+1\}$, $\delta>0$, and $\eta\in(0,1/2]$, define
\begin{equation}\label{eq:rare-signal}
 Z_v=\begin{cases}-v\delta/\eta,&\text{with probability }\eta,\\0,&\text{with probability }1-\eta.\end{cases}
\end{equation}
Then $\E Z_v=-v\delta$. On the rare event, the centered value
$Z_v-\E Z_v=Z_v+v\delta$ has magnitude
$\delta(1-\eta)/\eta$; on the complementary event, it has magnitude $\delta$. Therefore
\begin{equation}\label{eq:rare-moment}
 \E|Z_v+v\delta|^p
 =\delta^p\bigl[(1-\eta)+\eta^{1-p}(1-\eta)^p\bigr]
 \le2\delta^p\eta^{1-p}.
\end{equation}
Choosing
\begin{equation}\label{eq:eta-add}
 \eta=2^{1/(p-1)}\left(\frac{\delta}{\sigma_0}\right)^q
\end{equation}
ensures $\E|Z_v+v\delta|^p\le\sigma_0^p$ whenever $\eta\le1/2$. Indeed,
\[
 2\delta^p\eta^{1-p}
 =2\delta^p
 \left[
 2^{1/(p-1)}
 \left(\frac{\delta}{\sigma_0}\right)^q
 \right]^{1-p}
 =\sigma_0^p.
\]

\begin{theorem}[Convex additive lower bound]\label{thm:lb-add-cvx}
Let $\rho\le1/4$, $\varepsilon\le H_0D^2/32$, and suppose the $\eta$ in \eqref{eq:eta-add} with $\delta=16\varepsilon/D$ is at most $1/2$. Any stochastic first-order method that succeeds uniformly with probability $1-\rho$ on the subclass $H_1=\sigma_1=\sigma_2=0$ requires
\[
 T\ge c_p\left(\frac{\sigma_0D}{\varepsilon}\right)^q\log\frac1{2\rho}.
\]
\end{theorem}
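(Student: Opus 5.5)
We reduce optimization to a binary hypothesis test between two instances $f_{+1},f_{-1}$ indexed by $v\in\{-1,+1\}$. The oracle on each instance reveals $v$ only through the rare response \eqref{eq:rare-signal}. Concretely, we work in one dimension (embedding into $\R^d$ if needed) on $Q=[-D/2,D/2]$. We take a convex, $2H_0$-smooth objective whose gradient equals $v\delta$ on a region of width of order $D$ around the origin, where $\delta=16\varepsilon/D$. A natural choice is a Huber-type function $f_v(x)=\delta\,h(x)$ with $h(x)\approx|x-vD/4|$, smoothed on a window of width $\delta/(2H_0)$. The condition $\varepsilon\le H_0D^2/32$ guarantees that this window is at most of order $D$, so both minimizers $\pm D/4$ lie in $Q$ and the linear regime covers the middle of $Q$.

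The stochastic oracle returns $g(x,\xi)=\nabla f_v(x)+v\delta+Z_v$ in the linear region, and analogously elsewhere. Equivalently, the sample is a deterministic, $v$-independent part plus the random $Z_v$ with mean $-v\delta$. By \eqref{eq:rare-moment} and \eqref{eq:eta-add}, the noise $Z_v+v\delta$ is unbiased and has $p$th moment at most $\sigma_0^p$. Thus both instances lie in the subclass $H_1=\sigma_1=\sigma_2=0$.

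\emph{Separation step.} Any point $\widehat x$ with $f_v(\widehat x)-f_v^\star\le\varepsilon$ must lie on the side of $0$ determined by $v$. The reason is that $f_v(0)-f_v^\star\gtrsim\delta D/4-O(\delta^2/H_0)\ge 4\varepsilon-\varepsilon>\varepsilon$, using $\delta D=16\varepsilon$. Consequently, a method that succeeds with probability $1-\rho$ on both instances yields a test for $v$, namely $\mathrm{sign}(\widehat x)$, with both error probabilities at most $\rho$.

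\emph{Information step.} Each oracle answer is a deterministic function of the query together with an indicator of the rare event. When the rare event does not occur, the answer is identical under both hypotheses. Hence the two transcripts can be coupled to coincide until the first rare event. The probability that no rare event occurs in $T$ calls is $(1-\eta)^T$, and on that event the transcript carries no information about $v$. A Le Cam/Bretagnolle--Huber bound then gives the following. Testing error at most $\rho$ for both values of $v$ forces $(1-\eta)^T\le 2\rho$: on the no-signal event, of probability $(1-\eta)^T$, any test errs with probability at least $1/2$ under one of the two hypotheses. Since $\eta\le1/2$, we have $\log(1-\eta)\ge-2\eta$. Therefore
\[
 T\ge\frac{\log(1/(2\rho))}{2\eta}=c_p\left(\frac{\sigma_0D}{\varepsilon}\right)^q\log\frac1{2\rho},
\]
after substituting $\eta=2^{1/(p-1)}(16\varepsilon/(\sigma_0D))^q$ from \eqref{eq:eta-add}.

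The main obstacle is making the instance globally valid. The noise must be the same rare-spike law at every query, not just in the linear region, so that the coupling argument holds everywhere. At the same time, $f_v$ must be convex and $2H_0$-smooth, and its minimizer must stay in $Q$. I would try first to write $\nabla f_v(x)=\phi(x)+v\,\psi(x)$ with $|\psi|\le\delta$. The oracle then returns $\phi(x)+\psi(x)\,Z_v\cdot(-v/\delta)\cdot v$, i.e., the rare spike scaled by $\psi(x)/\delta$. This keeps the $v$-dependence confined to the rare event and preserves the moment bound, since the scaling factor has magnitude at most one.
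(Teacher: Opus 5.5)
Your argument is correct, but it uses a different hard pair than the paper, and the ``main obstacle'' you identify comes from that choice. The paper takes quadratics $f_v(x)=\frac{\lambda}{2}(x-vr)^2$ with $\lambda=64\varepsilon/D^2$ and $r=D/4$. Then $\delta=\lambda r=16\varepsilon/D$, and the smoothness requirement $\lambda/2\le H_0$ is exactly $\varepsilon\le H_0D^2/32$.

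What hides $v$ is not a region where $\nabla f_v$ is constant. It is that the difference $\nabla f_{+1}-\nabla f_{-1}\equiv-2\delta$ is constant on all of $\R$. Hence the single oracle $g_v(x)=\lambda x+Z_v$ is unbiased everywhere and returns the $v$-free value $\lambda x$ off the rare event. The separation bound $F_v(x)\ge\lambda r^2/2=2\varepsilon$ for $vx\le0$ is also exact.

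Your final decomposition is the general form of this mechanism. Take $\phi=\frac12(\nabla f_{+1}+\nabla f_{-1})$, $\psi=\frac12(\nabla f_{+1}-\nabla f_{-1})$, and the oracle $\phi(x)+v\psi(x)B/\eta$ with $B\sim\Ber(\eta)$. The paper's quadratic is the case $\phi(x)=\lambda x$, $\psi\equiv-\delta$. The error $v\psi(x)(B/\eta-1)$ has $p$th moment at most $(|\psi(x)|/\delta)^p\cdot2\delta^p\eta^{1-p}\le\sigma_0^p$. The bound $|\psi|\le\delta$ holds for your Huber pair because $|\nabla f_v|\le\delta$.

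Your route gives a reusable principle: any two instances with $\sup_x|\nabla f_{+1}(x)-\nabla f_{-1}(x)|\le2\delta$ can be hidden behind the rare signal. It also makes a common linear region unnecessary. That matters, because at $\varepsilon=H_0D^2/32$ the set where both Huber gradients are $\mp\delta$ shrinks to the single point $0$. The paper's quadratic buys simplicity: there is no smoothing window and no constant chasing. The testing step, $(1-\eta)^T\le2\rho$ followed by $-\log(1-\eta)\le2\eta$, is identical in both.

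Fix three small slips.
\begin{enumerate}
\item Near the origin $\nabla f_v=-v\delta$, not $v\delta$; your oracle formula $\nabla f_v+v\delta+Z_v$ is consistent only with $-v\delta$.
\item For $2H_0$-smoothness the gradient jump of $2\delta$ needs a window of total width $\delta/H_0$. So ``width $\delta/(2H_0)$'' must mean the half-width $w$.
\item With that $w$, $F_v(0)=\delta(D/4-w/2)=4\varepsilon-\delta^2/(4H_0)$. Since $\delta^2/(4H_0)=64\varepsilon^2/(D^2H_0)\le2\varepsilon$, the correction can reach $2\varepsilon$ rather than $\varepsilon$. You still get $F_v(0)\ge2\varepsilon>\varepsilon$.
\end{enumerate}
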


\begin{proof}
We construct two one-dimensional quadratics indexed by a hidden sign $v$. We verify their membership in the smooth subclass and the oracle moment condition, show that an accurate point reveals $v$, and then lower-bound the number of calls needed to observe the rare sign-dependent response.

\paragraph{Hard instances.}
Take $Q=[-D/2,D/2]$, $x_0=0$, $r=D/4$, and
\[
 f_v(x)=\frac\lambda2(x-vr)^2,
 \qquad \lambda=\frac{64\varepsilon}{D^2},
 \qquad \delta=\lambda r=\frac{16\varepsilon}{D}.
\]
The minimizer is $x_v^\star=vr\in Q$, and
\[
 F_v(x)=f_v(x)-f_v(x_v^\star)
 =\frac{\lambda}{2}(x-vr)^2.
\]
The Hessian is the scalar $\lambda$. Since
$\varepsilon\le H_0D^2/32$,
\[
 \frac{\lambda}{2}
 =\frac{32\varepsilon}{D^2}
 \le H_0,
\]
so $f_v$ satisfies Assumption~\ref{ass:h01} with $H_1=0$.

\paragraph{Oracle validity.}
Use the oracle
\[
 g_v(x):=\lambda x+Z_v.
\]
Because $\delta=\lambda r$,
\[
 \E g_v(x)
 =\lambda x-v\delta
 =\lambda(x-vr)
 =\nabla f_v(x),
\]
and its centered error is $Z_v-\E Z_v=Z_v+v\delta$. Thus \eqref{eq:rare-moment}--\eqref{eq:eta-add} verify Assumption~\ref{ass:oracle} with $\sigma_1=\sigma_2=0$.

\paragraph{Reduction to sign testing.}
If $vx\le0$, then $x$ lies on the wrong side of the origin and
$|x-vr|\ge r$. Consequently,
\[
 F_v(x)\ge\frac{\lambda r^2}{2}=2\varepsilon.
\]
In particular, any output satisfying $F_v(x)\le\varepsilon$ must have the same sign as $v$ and therefore identifies the hidden instance.

Now place the uniform prior on $v\in\{-1,+1\}$. Let $\mathcal N_T$ be the event that none of the first $T$ oracle calls produces the rare value in \eqref{eq:rare-signal}. Since the rare-event indicators are independent of the adaptive query points,
\[
 \Prb(\mathcal N_T)=(1-\eta)^T.
\]
On $\mathcal N_T$, every response equals $\lambda x$, regardless of $v$. Thus the entire adaptive transcript, and hence the algorithm's output distribution, is identical for the two signs. Conditional on $\mathcal N_T$, the probability of identifying $v$ is at most $1/2$. The average failure probability is therefore at least
$\frac12(1-\eta)^T$, so the worst-case failure probability over the two instances is at least the same quantity.

\paragraph{Number of oracle calls.}
If the method has failure probability at most $\rho$ on both instances, then
$(1-\eta)^T\le2\rho$. Taking logarithms gives
\[
 T\ge
 \frac{\log(1/(2\rho))}{-\log(1-\eta)}
 \ge\frac{1}{2\eta}\log\frac1{2\rho},
\]
where the last inequality uses
$-\log(1-\eta)\le2\eta$ for $\eta\le1/2$.
Finally, substituting \eqref{eq:eta-add} with
$\delta=16\varepsilon/D$ proves the stated lower bound, with a constant depending only on $p$.
\end{proof}

\begin{theorem}[Strongly convex additive lower bound]\label{thm:lb-add-sc}
Let $\rho\le1/4$ and $H_0\ge\mu/2$. Define $\delta=2\sqrt{\mu\varepsilon}$ and let $\eta$ be given by \eqref{eq:eta-add}. If $\eta\le1/2$, any method with uniform success probability $1-\rho$ on the subclass $H_1=\sigma_1=\sigma_2=0$ requires
\[
 T\ge c_p\left(\frac{\sigma_0}{\sqrt{\mu\varepsilon}}\right)^q\log\frac1{2\rho}.
\]
\end{theorem}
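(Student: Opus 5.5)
We follow the template of Theorem~\ref{thm:lb-add-cvx}: a two-point sign-testing argument on one-dimensional quadratics, where information about the sign enters only through the rare response \eqref{eq:rare-signal}. The change is that the curvature is now fixed at the strong-convexity level, and the separation $r$ is tuned so that an $\varepsilon$-accurate output identifies the sign.

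\textbf{Hard instances.} Take $x_0=0$ and
\[
 f_v(x)=\frac{\mu}{2}(x-vr)^2,\qquad r=2\sqrt{\varepsilon/\mu},\qquad v\in\{-1,+1\}.
\]
Each $f_v$ is $\mu$-strongly convex, so Assumption~\ref{ass:convexity} holds. The Hessian is $\mu$, and $\mu/2\le H_0$ gives Assumption~\ref{ass:h01} with $H_1=0$. The gap is $F_v(x)=\frac{\mu}{2}(x-vr)^2$.

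\textbf{Oracle.} Use $g_v(x)=\mu x+Z_v$ with $\delta=\mu r=2\sqrt{\mu\varepsilon}$, which matches the $\delta$ in the statement. Then $\E g_v(x)=\mu x-v\delta=\nabla f_v(x)$. By \eqref{eq:rare-moment}--\eqref{eq:eta-add}, the centered error has $p$th moment at most $\sigma_0^p$ whenever $\eta\le1/2$, so the instance lies in the subclass $H_1=\sigma_1=\sigma_2=0$.

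\textbf{Sign testing.} If $vx\le0$, then $|x-vr|\ge r$, so $F_v(x)\ge\mu r^2/2=2\varepsilon>\varepsilon$. Hence any successful output has the sign of $v$. The initial gap is $\Delta_0=F_v(0)=2\varepsilon$, which is consistent with $\varepsilon\le\Delta_0$.

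As in the convex proof, put the uniform prior on $v$. On the event that none of the $T$ calls produces the rare value, which has probability $(1-\eta)^T$ independently of the adaptive queries, the transcript does not depend on $v$. On that event the success probability is at most $1/2$. Therefore $\tfrac12(1-\eta)^T\le\rho$, and using $-\log(1-\eta)\le2\eta$ for $\eta\le1/2$,
\[
 T\ge\frac{1}{2\eta}\log\frac1{2\rho}.
\]

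\textbf{Substitution.} Plugging in $\eta=2^{1/(p-1)}(\delta/\sigma_0)^q$ with $\delta=2\sqrt{\mu\varepsilon}$ gives
\[
 T\ge c_p\Bigl(\frac{\sigma_0}{\sqrt{\mu\varepsilon}}\Bigr)^q\log\frac1{2\rho}.
\]

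The argument is essentially routine. The main point to check is that the instance is admissible: the quadratic must be exactly $\mu$-strongly convex, which forces the curvature to be $\mu$ rather than a tunable $\lambda$. Consequently, the separation $r$ has to produce the factor $\sqrt{\mu\varepsilon}$ in $\delta$, which it does.
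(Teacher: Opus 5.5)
Your proposal is correct and follows the paper's proof essentially step for step. It uses the same hard pair $f_v(x)=\frac{\mu}{2}(x-vr)^2$ with $r=2\sqrt{\varepsilon/\mu}$, the same rare-signal oracle $g_v(x)=\mu x+Z_v$ with $\delta=\mu r$, and the same no-rare-response sign-testing bound $T\ge(2\eta)^{-1}\log(1/(2\rho))$, followed by substituting $\eta=2^{1/(p-1)}(\delta/\sigma_0)^q$ (your check that $\Delta_0=F_v(0)=2\varepsilon$ is a harmless consistency remark the paper leaves implicit).
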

\begin{proof}
We reuse the rare-signal oracle with quadratic curvature $\mu$. The proof verifies strong convexity and oracle validity, reduces optimization to sign testing, and substitutes the strong-convexity scale into the testing bound.

\paragraph{Hard instances and oracle validity.}
Use
\[
 f_v(x)=\frac\mu2(x-vr)^2,
 \qquad
 r=2\sqrt{\frac{\varepsilon}{\mu}},
\]
so $\delta=\mu r=2\sqrt{\mu\varepsilon}$. The functions are $\mu$-strongly convex, and their quadratic upper model has coefficient $\mu/2\le H_0$. Thus they belong to the asserted subclass. If $vx\le0$, then
$F_v(x)\ge\mu r^2/2=2\varepsilon$, so an $\varepsilon$-accurate output again identifies $v$. With the oracle
$g_v(x)=\mu x+Z_v$, the unbiasedness and moment calculation use
\eqref{eq:rare-moment}--\eqref{eq:eta-add}, with $\lambda$ replaced by $\mu$.

\paragraph{Testing reduction and final complexity.}
On the event with no rare response, the transcript is independent of $v$, exactly as in the convex construction. Therefore the sign-testing calculation gives
$T\ge[2\eta]^{-1}\log(1/(2\rho))$. Substitution of
$\delta=2\sqrt{\mu\varepsilon}$ into \eqref{eq:eta-add} then gives
$\eta^{-1}=\Theta_p((\sigma_0/\sqrt{\mu\varepsilon})^q)$ and proves the claim.
\end{proof}

\subsection{Gap/interpolation noise}\label{app:lower-gap}
Set $\sigma_0=\sigma_1=0$ and use Bernoulli thinning of the exact gradient:
\begin{equation}\label{eq:thinning}
 g(x)=\frac{B}{\eta}\nabla f(x),\qquad B\sim\Ber(\eta).
\end{equation}
This oracle is unbiased because $\E B=\eta$. Its centered error equals
$(B/\eta-1)\nabla f(x)$. If $\eta\le1/2$, direct evaluation of the two Bernoulli outcomes gives
\begin{equation}\label{eq:thin-moment}
 \begin{aligned}
 \E_t\|g(x)-\nabla f(x)\|^p
 &=\left[(1-\eta)+\eta^{1-p}(1-\eta)^p\right]
 \|\nabla f(x)\|^p\\
 &\le2\eta^{1-p}\|\nabla f(x)\|^p.
 \end{aligned}
\end{equation}

\begin{theorem}[Convex gap-noise lower bound]\label{thm:lb-gap-cvx}
Let $\rho\le1/4$, $\varepsilon\le H_0D^2/32$, and set
$\lambda=64\varepsilon/D^2$. Choose $\eta$ so that
\[
 2\eta^{1-p}(2\lambda)^{p/2}=\sigma_2^p,
\]
and assume the resulting $\eta$ is at most $1/2$. Any method with uniform success probability $1-\rho$ on the subclass $H_1=\sigma_0=\sigma_1=0$ requires
\[
 T\ge c_p\left(\frac{\sigma_2D}{\sqrt\varepsilon}\right)^q\log\frac1{2\rho}.
\]
\end{theorem}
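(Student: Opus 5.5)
We will mirror the proof of Theorem~\ref{thm:lb-add-cvx}, replacing the rare additive signal by the Bernoulli thinning oracle \eqref{eq:thinning}. Take $Q=[-D/2,D/2]$, $x_0=0$, $r=D/4$, and the two one-dimensional quadratics $f_v(x)=\frac\lambda2(x-vr)^2$ with $\lambda=64\varepsilon/D^2$ and $v\in\{-1,+1\}$. The conditions $\lambda/2=32\varepsilon/D^2\le H_0$ and $x_v^\star=vr\in Q$ follow exactly as before, so each $f_v$ lies in the $H_1=0$ subclass. Moreover, $F_v(x)\ge\lambda r^2/2=2\varepsilon$ whenever $vx\le0$. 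Hence any $\varepsilon$-accurate output has the sign of $v$.

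\textbf{Oracle validity.} Use $g_v(x)=(B/\eta)\nabla f_v(x)$ with $B\sim\Ber(\eta)$ drawn fresh at each call. Unbiasedness is immediate. By \eqref{eq:thin-moment}, the $p$th central moment is at most $2\eta^{1-p}|\nabla f_v(x)|^p$. For a one-dimensional quadratic, $|\nabla f_v(x)|^2=\lambda^2(x-vr)^2=2\lambda F_v(x)$, so the moment is at most $2\eta^{1-p}(2\lambda)^{p/2}F_v(x)^{p/2}=\sigma_2^pF_v(x)^{p/2}$ by the defining choice of $\eta$. This verifies Assumption~\ref{ass:oracle} with $\sigma_0=\sigma_1=0$.

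\textbf{Indistinguishability.} This is the step that differs from the additive case and the one I expect to need care. When $B=0$ the response is $0$, and $0$ carries no information about $v$. When $B=1$ the response $\lambda(x-vr)/\eta$ reveals $v$ from any query $x$. Let $\mathcal N_T$ be the event that all $T$ Bernoulli draws vanish. The draws are independent of the adaptive queries, so $\Prb(\mathcal N_T)=(1-\eta)^T$. On $\mathcal N_T$ the transcript is the all-zero sequence for both signs, so the output law does not depend on $v$. Under the uniform prior, the failure probability is therefore at least $\frac12(1-\eta)^T$. Uniform success $1-\rho$ then forces $(1-\eta)^T\le2\rho$, hence $T\ge(2\eta)^{-1}\log(1/(2\rho))$ using $\eta\le1/2$.

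\textbf{Final complexity.} Solve $\eta^{p-1}=2(2\lambda)^{p/2}\sigma_2^{-p}$ to get $\eta^{-1}=\Theta_p\bigl((\sigma_2/\sqrt{\lambda})^{q}\bigr)$. Since $\sqrt\lambda=8\sqrt\varepsilon/D$, this gives $\eta^{-1}=\Theta_p\bigl((\sigma_2D/\sqrt\varepsilon)^q\bigr)$, which yields the stated bound with $c_p$ depending only on $p$.
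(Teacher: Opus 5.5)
Your proposal is correct and follows essentially the same route as the paper. You reuse the quadratic pair from Theorem~\ref{thm:lb-add-cvx} with the Bernoulli-thinning oracle, verify the gap-dependent moment bound via $|\nabla f_v(x)|^2=2\lambda F_v(x)$, argue that on the all-zero event the transcript does not depend on $v$ (giving $(1-\eta)^T\le2\rho$), and solve for $\eta^{-1}=\Theta_p\bigl((\sigma_2D/\sqrt\varepsilon)^q\bigr)$.
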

\begin{proof}
We use the convex quadratic pair from Theorem~\ref{thm:lb-add-cvx}, replace the additive rare signal by Bernoulli thinning, verify the gap-dependent conditional moment bound, and then repeat the sign-testing reduction with its probability written explicitly.

\paragraph{Conditional moment bound.}
Use the same two quadratics as in Theorem~\ref{thm:lb-add-cvx}. For a quadratic,
$|\nabla f_v(x)|^2=2\lambda F_v(x)$, and hence
\[
 |\nabla f_v(x)|^p=(2\lambda F_v(x))^{p/2}.
\]
Therefore \eqref{eq:thin-moment} and the definition of $\eta$ give
\[
 \E_t|g(x)-\nabla f_v(x)|^p
 \le\sigma_2^pF_v(x)^{p/2}.
\]
Solving for $\eta$ yields
\[
 \eta^{-1}
 =\Theta_p\left(\frac{\sigma_2}{\sqrt\lambda}\right)^q
 =\Theta_p\left(\frac{\sigma_2D}{\sqrt\varepsilon}\right)^q.
\]
\paragraph{Testing reduction.}
If $B_t=0$ at every call, every oracle response is zero, independently of the sign $v$. The full adaptive transcript then contains no information about the minimizer. Since an $\varepsilon$-accurate output must identify $v$, the same calculation as in Theorem~\ref{thm:lb-add-cvx} gives
the following explicit testing inequality:
\[
 \rho
 \ge \frac12\Prb\{B_1=\cdots=B_T=0\}
 =\frac12(1-\eta)^T.
\]
Taking logarithms gives
$T\log(1/(1-\eta))\ge\log(1/(2\rho))$.
Because $\eta\le1/2$ implies
$\log(1/(1-\eta))\le\eta/(1-\eta)\le2\eta$, we conclude that
$T\ge[2\eta]^{-1}\log(1/(2\rho))$. Combining this inequality with the displayed value of $\eta^{-1}$ proves the claim.
\end{proof}

\begin{lemma}[Thinning transfers exact-gradient lower bounds]\label{lem:thin-transfer}
Suppose a hard distribution over functions has the property that every exact first-order method using fewer than $K$ informative gradients has average failure at least $c_0>0$. Replacing the exact oracle by \eqref{eq:thinning}, any stochastic method with constant success probability bounded away from $1-c_0$ needs $T=\Omega(K/\eta)$ calls.
\end{lemma}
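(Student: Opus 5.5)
The plan is a simulation reduction, in the same spirit as the sign-testing arguments in Theorems~\ref{thm:lb-add-cvx} and~\ref{thm:lb-gap-cvx}. Given a stochastic method $\mathcal A$ that uses the thinned oracle \eqref{eq:thinning} for $T$ calls, I will build an exact first-order method $\mathcal A'$ that simulates $\mathcal A$ and consults the exact oracle only when the simulated Bernoulli variable equals one.

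\textbf{Construction of $\mathcal A'$.} At each simulated call at a query $x_t$, $\mathcal A'$ draws its own $B_t\sim\Ber(\eta)$, independent of the function and of the past.
\begin{itemize}
\item If $B_t=0$, it feeds the response $0$ to $\mathcal A$ without querying anything.
\item If $B_t=1$, it queries the exact oracle at $x_t$ and feeds $\nabla f(x_t)/\eta$.
\end{itemize}
The joint law of the transcript and output of $\mathcal A'$ then coincides with that of $\mathcal A$ run on the thinned oracle. This holds for every fixed function, and hence also under the hard distribution. The number of informative exact gradients used by $\mathcal A'$ is $N_T=\sum_{t\le T}B_t$.

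\textbf{Bounding $N_T$.} Because the $B_t$ are independent of the adaptive query points, $N_T\sim\mathrm{Bin}(T,\eta)$. Markov's inequality, or a Chernoff bound, gives $\Prb\{N_T\ge K\}\le T\eta/K$.

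\textbf{Truncation.} Define $\mathcal A''$ as $\mathcal A'$ stopped once it has made $K-1$ informative calls, after which it outputs an arbitrary point. This is an exact method with fewer than $K$ informative gradients, so its average failure is at least $c_0$. Its output agrees with that of $\mathcal A$ except on the event $\{N_T\ge K\}$. Therefore
\[
\text{fail}(\mathcal A)\ge c_0-\Prb\{N_T\ge K\}\ge c_0-T\eta/K.
\]
Suppose $\mathcal A$ has success probability at least $1-c_0+\gamma$ for some $\gamma>0$, which is the meaning of ``bounded away from $1-c_0$''. Then $\text{fail}(\mathcal A)\le c_0-\gamma$, so $T\eta/K\ge\gamma$, that is, $T\ge\gamma K/\eta=\Omega(K/\eta)$.

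\textbf{Main obstacle.} The delicate point is the independence claim: the $B_t$ must be exogenous randomness that the simulator can generate itself, so that $N_T$ really is binomial regardless of the adaptivity of the queries. A second point is the convention for what counts as an informative gradient. A zero response carries no information about the function, so the reduction is legitimate only because the hard distribution's lower bound counts exact oracle calls, and each such call corresponds to some $B_t=1$.
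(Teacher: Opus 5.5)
Your proof is correct and follows the paper's argument. In both, the thinned oracle is simulated by an exact oracle that is consulted only when $B_t=1$; the count $N_T\sim\mathrm{Bin}(T,\eta)$ despite adaptive queries; and Markov's inequality is combined with the exact-gradient lower bound on $\{N_T<K\}$. Your truncation coupling should halt at the moment a $K$-th exact call would be needed, so that agreement holds on all of $\{N_T\le K-1\}$. It then gives $\mathrm{fail}\ge c_0-T\eta/K$, and hence the threshold $1-c_0$ exactly as stated; the paper instead conditions on the Bernoulli sequence with $T<K/(2\eta)$ and only concludes the weaker claim for success bounded away from $1-c_0/2$.
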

\begin{proof}
Let $S_T$ denote the number of nonzero thinned responses among $T$ calls. We first couple the stochastic transcript to an exact-gradient transcript containing only $S_T$ informative queries, then control $S_T$ and invoke the assumed exact-gradient lower bound.

\paragraph{Transcript simulation.}
The number of nonzero answers is
$S_T:=\sum_{t=1}^TB_t\sim\mathrm{Binomial}(T,\eta)$, even though the query points may be adaptive, because the Bernoulli variables are fresh and independent. Conditional on the Bernoulli sequence, every nonzero response is
$\nabla f(x_t)/\eta$ and therefore reveals exactly the same information as an exact gradient, while a zero response contains no instance-dependent information. Thus the stochastic transcript can be simulated by an exact-oracle method making only $S_T$ informative queries.

\paragraph{Informative-query count.}
If $T<K/(2\eta)$, then $\E S_T=T\eta<K/2$, and Markov's inequality gives
$\Prb\{S_T\ge K\}\le1/2$. On the event $\{S_T<K\}$, the assumed exact-oracle lower bound gives conditional average failure at least $c_0$. Hence the unconditional average failure is at least $c_0/2$. This proves that a constant success probability bounded away from $1-c_0/2$ requires $T=\Omega(K/\eta)$. Yao's minimax principle converts the distributional statement into a randomized worst-case lower bound.
\end{proof}

\begin{theorem}[Strongly convex gap-noise lower bound]\label{thm:lb-gap-sc}
There is a universal $c>0$ such that, on a fixed-condition-number smooth strongly convex subclass (for example, $L=4\mu$, corresponding to $H_0=2\mu$), and in the regime $\sigma_2\gtrsim_p\sqrt\mu$, any stochastic first-order method with constant success probability requires
\[
 T\ge c_p\left(\frac{\sigma_2}{\sqrt\mu}\right)^q\log\frac{\Delta_0}{\varepsilon}.
\]
For arbitrary confidence, an additional term of order $\eta^{-1}\log(1/\rho)$ is unavoidable; no claim is made that all upper and lower confidence logarithms coincide.
\end{theorem}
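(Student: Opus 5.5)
We prove Theorem~\ref{thm:lb-gap-sc} by combining Lemma~\ref{lem:thin-transfer} with a classical exact-gradient lower bound for a fixed-condition-number strongly convex quadratic family. The argument has four steps.

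\textbf{Step 1: choose the exact-gradient hard family.} We take Nesterov's worst-case $\mu$-strongly convex quadratic in high dimension with $L=4\mu$, so $H_0=L/2=2\mu$ and $H_1=0$. We randomize it by a uniformly random orthogonal rotation, or equivalently by random signs and permutations of coordinates. This makes the hardness hold for the average over the family, not only for a deterministic resisting oracle. The classical result \citep{nesterov2004} then applies to every first-order method whose queries lie in the span of observed gradients. Any such method needs
\[
K=\Omega\Bigl(\sqrt{L/\mu}\,\log(\Delta_0/\varepsilon)\Bigr)=\Omega\bigl(\log(\Delta_0/\varepsilon)\bigr)
\]
informative gradients to reach gap $\varepsilon$. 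For general randomized methods, the rotated high-dimensional version gives average failure at least some constant $c_0$ when fewer than $K$ gradients are used. Since the condition number is fixed, the factor $\sqrt{L/\mu}$ is an absolute constant, and this is exactly why the statement is restricted to a fixed-condition-number subclass.

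\textbf{Step 2: calibrate the thinning oracle.} We use the oracle \eqref{eq:thinning}. For a quadratic with Hessian eigenvalues in $[\mu,L]$ we have $\|\nabla f(x)\|^2\le 2L\,F(x)$. Combined with \eqref{eq:thin-moment}, this gives
\[
\E_t\|g-\nabla f\|^p\le 2\eta^{1-p}(2L)^{p/2}F(x)^{p/2}.
\]
We set $\eta$ so that $2\eta^{1-p}(8\mu)^{p/2}=\sigma_2^p$. Solving for $\eta$ yields $\eta^{-1}=\Theta_p\bigl((\sigma_2/\sqrt\mu)^q\bigr)$. The regime assumption $\sigma_2\gtrsim_p\sqrt\mu$ is precisely what guarantees $\eta\le1/2$, which \eqref{eq:thin-moment} requires. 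Hence the oracle satisfies Assumption~\ref{ass:oracle} with $\sigma_0=\sigma_1=0$.

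\textbf{Step 3: transfer the bound.} Lemma~\ref{lem:thin-transfer} now says that constant success probability requires
\[
T=\Omega(K/\eta)=\Omega_p\Bigl((\sigma_2/\sqrt\mu)^q\log(\Delta_0/\varepsilon)\Bigr),
\]
and Yao's principle turns this into a worst-case randomized bound. For the confidence remark, we add a two-point sign-testing instance as in Theorem~\ref{thm:lb-gap-cvx}. With probability $(1-\eta)^T$ every response is zero, so the transcript carries no information, which forces $T\gtrsim\eta^{-1}\log(1/\rho)$. The final bound is the maximum of the two, and we make no claim that the logarithms combine multiplicatively.

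\textbf{Main obstacle.} The delicate point is Step 1. Lemma~\ref{lem:thin-transfer} needs a distributional exact-oracle bound that holds for arbitrary randomized algorithms, not only span methods, and it must hold uniformly in $\varepsilon$ with the $\log(\Delta_0/\varepsilon)$ factor intact. My first attempt would be the random-rotation argument for quadratics, in dimension polynomial in $K$. Here one shows that with fewer than $K$ gradients the iterate is nearly orthogonal to the unexplored Krylov directions, with high probability. The remaining coordinates then carry a fixed fraction of the initial gap raised to a geometric power, which yields the required constant $c_0$. I would also check a coupling subtlety in Lemma~\ref{lem:thin-transfer}: queries made during zero responses are still adaptive, but they carry no information, so the simulation by an exact-oracle method is valid.
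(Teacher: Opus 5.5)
Your proposal follows the paper's proof essentially step for step. It uses the same Nesterov chain with $L=4\mu$ (so $K=\Omega(\log(\Delta_0/\varepsilon))$), the same thinned oracle calibrated through $\|\nabla f(x)\|^2\le 2LF(x)$ and $2\eta^{1-p}(2L)^{p/2}=\sigma_2^p$, Lemma~\ref{lem:thin-transfer} plus Yao's principle, and the same no-informative-response argument for the $\eta^{-1}\log(1/\rho)$ term. Your added care in making the exact-oracle bound hold on average for randomized methods addresses a point the paper only asserts. However, drop the claim that random signs and permutations are equivalent to a uniform rotation: for a quadratic, a single generic dense query already reveals a signed-permuted tridiagonal Hessian, and hence the minimizer, so only the rotation-based randomization can work.
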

\begin{proof}
We start from a fixed-condition-number deterministic zero-chain requiring $K$ informative gradients. We then verify that Bernoulli thinning satisfies the gap-dependent moment model, apply Lemma~\ref{lem:thin-transfer}, and finally account for the confidence-dependent probability of receiving no informative response.

\paragraph{Deterministic hard family.}
Take an $L$-smooth, $\mu$-strongly convex zero-chain family with $L=4\mu$ from the first-order lower-bound construction of \citet{nesterov2004}. For this fixed condition number, the construction states that, in sufficiently high dimension, every exact first-order method needs
\[
 K=\Omega\left(\log\frac{\Delta_0}{\varepsilon}\right)
\]
informative gradients to reduce the initial gap $\Delta_0$ to $\varepsilon$. Because our convention is $L=2H_0$, choosing $H_0=2\mu$ places the family in the subclass of Assumption~\ref{ass:h01}.

\paragraph{Conditional moment bound for the thinned oracle.}
For every $L$-smooth convex function, apply its global upper model at
$x-\nabla f(x)/L$. Comparing the resulting value with $f^\star$ gives
$\|\nabla f(x)\|^2\le2LF(x)$. Substituting this inequality into \eqref{eq:thin-moment} yields
\[
 \E_t\|g(x_t)-\nabla f(x_t)\|^p\le2\eta^{1-p}(2L)^{p/2}F(x_t)^{p/2}.
\]
Choose $\eta$ by
\[
 2\eta^{1-p}(2L)^{p/2}=\sigma_2^p.
\]
The assumption $\sigma_2\gtrsim_p\sqrt\mu$ ensures that the resulting $\eta$ lies in $(0,1/2]$. Since $L=4\mu$,
\[
 \eta^{-1}
 =\Theta_p\left(\frac{\sigma_2}{\sqrt L}\right)^q
 =\Theta_p\left(\frac{\sigma_2}{\sqrt\mu}\right)^q.
\]
\paragraph{Transfer and confidence.}
Lemma~\ref{lem:thin-transfer} now gives
$T=\Omega(K/\eta)$ and proves the constant-confidence term. For confidence $1-\rho$, even a two-instance test cannot succeed before at least one informative Bernoulli outcome appears; the probability of no such outcome is $(1-\eta)^T$. Repeating the logarithmic calculation from Theorem~\ref{thm:lb-add-cvx} yields the additional necessary term
$\Omega(\eta^{-1}\log(1/\rho))$.
\end{proof}

\section{Multiplicative Noise: Universality versus Specialized Acceleration}\label{app:sigma1}
Set $H_1=\sigma_0=\sigma_2=0$ and suppose
\begin{equation}\label{eq:app-mult}
 \E_t\|g(x_t)-\nabla f(x_t)\|^p\le\sigma_1^p\|\nabla f(x_t)\|^p.
\end{equation}
The thinned oracle \eqref{eq:thinning} satisfies this condition when $2\eta^{1-p}\le\sigma_1^p$, so the transfer lemma gives the information-theoretic benchmark
\begin{equation}\label{eq:app-mult-lb}
 T=\Omega_p\bigl((1+\sigma_1^q)K\bigr)
\end{equation}
Equation~\ref{eq:app-mult-lb} is the information-theoretic benchmark against which we compare the universal phase construction below.
Here $K$ denotes any deterministic exact-gradient lower complexity at constant confidence.

The universal phase proof instead bounds $\|\nabla f\|$ by a phase constant before robustification; its sample cost can then contain $K+(\sigma_1K)^q$. Averaging $b$ stochastic gradients does not remove this term within the same argument. A finite-$p$ batch mean decreases the relative scale as $b^{-1/q}$, so the number of accelerated outer iterations becomes
$K+\sigma_1^qK^q/b$, while raw samples equal
\begin{equation}\label{eq:app-batching}
 bK+\sigma_1^qK^q.
\end{equation}
Equation~\ref{eq:app-batching} shows that the problematic term is independent of $b$.

At $p=2$, specialized AGNES dynamics achieves $O((1+\sigma_1^2)K)$ in expectation for smooth convex and strongly convex minimization \citep{gupta2024agnes}, showing that the benchmark is algorithmically meaningful. For $1<p<2$, robust mean estimation gives the following partial reduction.

\begin{lemma}[Robust relative gradient estimator]\label{lem:relative-macro}
At a predictable point $x$, take $m$ fresh samples satisfying \eqref{eq:app-mult}. There is a geometric median-of-means estimator $\widehat g(x)$ such that with probability at least $1-\delta$,
\[
 \|\widehat g(x)-\nabla f(x)\|
 \le K_p\sigma_1\left(\frac{\log(2/\delta)}{m}\right)^{1/q}\|\nabla f(x)\|.
\]
Thus constant relative accuracy uses $m=O_p(\sigma_1^q\log(1/\delta))$ samples.
\end{lemma}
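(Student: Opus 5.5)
The estimator is the geometric median-of-means of \citet{minsker2015geom}, built in three stages.

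\emph{Construction.} Set $k:=\lceil c\log(2/\delta)\rceil$ blocks for an absolute constant $c$ and assume $m\ge k$; otherwise the claimed bound exceeds a constant multiple of $\sigma_1\|\nabla f(x)\|$ and is handled by inflating $K_p$. Split the $m$ fresh samples into $k$ disjoint blocks of size $b:=\lfloor m/k\rfloor$. Let $\bar g_j$ be the empirical mean of block $j$, and let $\widehat g(x)$ be the geometric median of $\bar g_1,\dots,\bar g_k$. Because $x$ is predictable, we condition on $\mathcal F_{t-1}$. The samples are then i.i.d. with mean $\nabla f(x)$, and the $p$th central moment is at most $s^p:=\sigma_1^p\|\nabla f(x)\|^p$ by \eqref{eq:app-mult}.

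\emph{Per-block bound.} We use the von Bahr--Esseen inequality for $1<p\le2$. Applied to independent centered vectors in Euclidean space, it gives $\E\|\sum_{i\le b}X_i\|^p\le 2\sum_i\E\|X_i\|^p$; a symmetrization argument works equally well, since Hilbert spaces have type 2 and hence type $p$. This yields
\[
\E\|\bar g_j-\nabla f(x)\|^p\le 2b^{1-p}s^p.
\]
Markov's inequality then gives
\[
\Prb\{\|\bar g_j-\nabla f(x)\|>r\}\le 2b^{1-p}s^p/r^p.
\]
Choose $r:=C s\,b^{-(p-1)/p}=C s\,b^{-1/q}$ with $C$ large enough that this probability is at most some $\alpha<1/2$, for instance $\alpha=1/10$.

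\emph{Aggregation.} By Minsker's Theorem~3.1, if fewer than a fixed fraction of the blocks are bad, the geometric median lies within $C_\alpha r$ of $\nabla f(x)$. Block badness is independent across blocks, so Hoeffding's inequality bounds the probability that too many blocks are bad by $e^{-c'k}\le\delta$ once $c$ is chosen appropriately. Since $b\asymp m/\log(2/\delta)$, we have
\[
r\asymp \sigma_1\|\nabla f(x)\|\bigl(\log(2/\delta)/m\bigr)^{1/q},
\]
which is the stated bound. Setting the right-hand side equal to $c\|\nabla f(x)\|$ gives $m=O_p(\sigma_1^q\log(1/\delta))$.

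The main obstacle is the vector-valued $p$-moment bound for the block mean when $p<2$, where variance arguments are unavailable. I expect to settle it using either the von Bahr--Esseen inequality or type-$p$ symmetrization in Hilbert space; the constant depends only on $p$ and is absorbed into $K_p$. The remaining steps are standard applications of Minsker's median-of-means result.
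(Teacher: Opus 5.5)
Your argument is the paper's argument. The paper proves a general finite-$p$ geometric median-of-means lemma by exactly your steps: $k\asymp\log(2/\delta)$ blocks of size $\lfloor n/k\rfloor$, the vector von Bahr--Esseen bound $\E\|\bar X_j-\mu\|^p\le C_p\nu^pb^{1-p}$, Markov, Chernoff amplification, and Minsker's stability of the geometric median. It then applies that lemma conditionally on the history with $\nu=\sigma_1\|\nabla f(x)\|$, $n=m$, and $(p-1)/p=1/q$. Your bad-block probability $1/10$, with a bad fraction bounded away from $1/2$, is the right way to get a universal Minsker constant. The paper's wording, ``more than half of the blocks are good,'' would not by itself give one.

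The one step that does not work is your treatment of $m<k$. It is true that the right-hand side is then at least a constant times $\sigma_1\|\nabla f(x)\|$. But that only helps when this quantity exceeds $\|\nabla f(x)\|$, in which case $\widehat g=0$ trivially works. For small $\sigma_1$, no choice of $K_p$ rescues the bound, and the construction itself degenerates because $b=0$.

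To see the failure, fix $m$ and take the one-dimensional oracles $\pm M+\sigma_1MZ$, where $Z$ is symmetric with $\E|Z|^p\le1$ and polynomial tails. Choose $\sigma_1\asymp K_p^{-1}(m/\log(2/\delta))^{1/q}$, so that the claimed radius is below $M$. Any estimator meeting the bound would then test the sign with error at most $\delta$. Yet the overlap of the two $m$-sample laws is bounded below by a power of $\sigma_1$, hence decays only polylogarithmically in $1/\delta$. For small $\delta$ this exceeds $\delta$, a contradiction.

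So the bound can fail when $m\ll\log(1/\delta)$, and you should assume $m\ge c\log(2/\delta)$ rather than claim to remove that condition. This is exactly the hypothesis $n\ge c\log(2/\delta)$ of the paper's geometric median-of-means lemma, which the statement here inherits without saying so. Correspondingly, constant relative accuracy requires $m\gtrsim_p(1+\sigma_1^q)\log(2/\delta)$, not just $\sigma_1^q\log(1/\delta)$.
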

\begin{proof}
Conditionally on the history, we identify the finite-$p$ moment scale of the fresh stochastic gradients and apply the geometric median-of-means lemma. We then rewrite its exponent as $1/q$ and solve the resulting inequality for a prescribed relative accuracy.

\paragraph{Conditional reduction.}
Condition on the history that determines the predictable point $x$. The fresh stochastic gradients are then independent random vectors with conditional mean $\nabla f(x)$ and conditional $p$th central moment at most
\[
 \nu^p:=\sigma_1^p\|\nabla f(x)\|^p.
\]
Apply Lemma~\ref{lem:GMoM} below with this value of $\nu$ and $n=m$. It gives
\[
 \|\widehat g(x)-\nabla f(x)\|
 \le K_p\nu
 \left(\frac{\log(2/\delta)}{m}\right)^{(p-1)/p}.
\]
Since $(p-1)/p=1/q$ and $\nu=\sigma_1\|\nabla f(x)\|$, this is the claimed inequality. To make the relative error at most any fixed constant $c\in(0,1)$, it is enough to take
$m\ge C_{p,c}\sigma_1^q\log(2/\delta)$.
\end{proof}
This shows that the heavy-tailed mean-estimation layer itself has the desired $\sigma_1^q$ scaling. What remains is a high-probability accelerated outer method that is robust to the resulting constant relative error without adding unfavorable conditioning requirements. Existing relative-inexact acceleration analyses impose additional restrictions \citep{vasin2023relative,kornilov2025intermediate}. We therefore do not state a general finite-$p$ matching theorem for $\sigma_1$.

\section{Gradient-Only End-to-End Initialization}\label{app:init}
The restart statements use a supplied upper bound $F(x_0)\le\Delta_0$. This can be obtained from stochastic gradients without evaluating $f$, $f^\star$, or an exact gradient.

\subsection{Choosing the auxiliary set}\label{app:choosing-Q}
The convex result requires a closed bounded convex set $Q$ with $x_0,x^\star\in Q$ and a tractable Euclidean projection; this set controls the iterates but does not constrain the original problem. One may take a known compact convex parameter set containing $x_0$ and an optimizer, or $Q=B(x_0,R)$ when a radius $R\ge\|x_0-x^\star\|$ is available, in which case $D=2R$. Regularization can provide such a radius: if $x_0=0$ and $f(x)=\ell(x)+(\lambda/2)\|x\|^2$ for a nonnegative convex loss $\ell$, then $f(x^\star)\le f(0)$ gives $\|x^\star\|\le\sqrt{2\ell(0)/\lambda}$. Here $f$ is also $\lambda$-strongly convex, so the shrinking-ball construction of Appendix~\ref{app:restart} applies without prior knowledge of $x^\star$.

\subsection{Robust gradient estimation}

\begin{lemma}[Finite-$p$ geometric median-of-means]\label{lem:GMoM}
Let $X_i$ be independent, $\E X_i=\mu$, and $\E\|X_i-\mu\|^p\le\nu^p$ for $1<p\le2$. For $n\ge c\log(2/\delta)$, a geometric median-of-means estimator $\widehat\mu$ satisfies with probability at least $1-\delta$,
\[
 \|\widehat\mu-\mu\|\le K_p\nu\left(\frac{\log(2/\delta)}{n}\right)^{(p-1)/p}.
\]
\end{lemma}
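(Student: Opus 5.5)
The plan is the standard two-stage median-of-means argument (Minsker's geometric median-of-means), with the variance step replaced by a finite-$p$ moment bound. Split the $n$ samples into $k=\lceil c_0\log(2/\delta)\rceil$ disjoint blocks of size $m=\lfloor n/k\rfloor$; the condition $n\ge c\log(2/\delta)$ ensures $m\ge1$ and $m\asymp n/\log(2/\delta)$. Let $\bar X_j$ be the block means and let $\widehat\mu$ be their geometric median in $\R^d$. The proof has two parts: a per-block bound holding with constant probability, and a confidence boost from the robustness of the geometric median.

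\textbf{Per-block bound.} I would show
\[
\E\|\bar X_j-\mu\|^p\le 2^{2-p}\nu^p m^{1-p}.
\]
This follows from the von Bahr--Esseen type inequality in Hilbert space: for independent centered $Y_i$ and $1<p\le2$, one has $\E\|\sum_i Y_i\|^p\le 2^{2-p}\sum_i\E\|Y_i\|^p$. To prove that inequality, I would symmetrize and then apply the scalar inequality $\|a+b\|^p\le\|a\|^p+p\|a\|^{p-2}\langle a,b\rangle+2^{2-p}\|b\|^p$ inductively. Alternatively, a truncation argument gives the same bound with some constant $C_p$, which is all that is needed. Dividing by $m^p$ gives the displayed bound. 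Markov's inequality then yields
\[
\Prb\{\|\bar X_j-\mu\|>r\}\le 1/10 \quad\text{for } r=C_p'\nu m^{-(p-1)/p}.
\]

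\textbf{Confidence boost.} I would use Minsker's lemma: for any fixed $\alpha\in(0,1/2)$ there is $C_\alpha$ such that if $\|\widehat\mu-\mu\|>C_\alpha r$, then more than $\alpha k$ of the points $\bar X_j$ satisfy $\|\bar X_j-\mu\|>r$. The blocks are independent, so the number of bad blocks is a sum of independent Bernoulli variables with mean at most $k/10$. Taking $\alpha=1/4$, Hoeffding's inequality bounds the probability that more than $k/4$ blocks are bad by $\exp(-2k(0.15)^2)\le\delta/2$ once $c_0$ is large enough. Substituting $m\asymp n/\log(2/\delta)$ into $r$ gives
\[
\|\widehat\mu-\mu\|\le K_p\nu\left(\frac{\log(2/\delta)}{n}\right)^{(p-1)/p}.
\]

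The main obstacle is the dimension-free $p$th-moment bound for vector sums with $p<2$. Once that holds in Euclidean space, the geometric-median step is a black-box citation of \citet{minsker2015geom}, and the remaining steps are routine constant tracking.
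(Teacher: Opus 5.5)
Your proposal is correct and follows the same route as the paper: disjoint blocks, a dimension-free von Bahr--Esseen bound on each block mean, Markov's inequality, a Chernoff/Hoeffding count of bad blocks, and Minsker's geometric-median stability lemma. Your version is slightly more careful than the paper's. You require at most a fixed fraction $\alpha=1/4<1/2$ of bad blocks, which is what Minsker's lemma needs to give a bounded constant $C_\alpha$. The paper only asserts that more than half of the blocks are good, which does not by itself yield a universal constant.
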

\begin{proof}
We divide the $n$ independent observations into independent blocks, control each block mean by a finite-$p$ moment inequality and Markov's inequality, amplify the fraction of good blocks by a Chernoff bound, and finally use geometric-median stability to obtain the stated confidence bound.

\paragraph{Block means.}
Let $k=\lceil c_0\log(2/\delta)\rceil$ and split the observations into $k$ disjoint blocks of common size
$b=\lfloor n/k\rfloor$, discarding at most $k-1$ observations. For block $j$, let
\[
 \bar X_j:=\frac1b\sum_{i\in\mathcal I_j}X_i.
\]
The finite-$p$ moment inequality for sums of independent mean-zero random vectors in a Hilbert space gives
\[
 \E\|\bar X_j-\mu\|^p
 \le C_p\nu^p b^{1-p}.
\]
This is the vector analogue of the von Bahr--Esseen inequality; see, for example, \citet{minsker2015geom}. Markov's inequality therefore implies
\[
 \Prb\left\{
 \|\bar X_j-\mu\|>
 (4C_p)^{1/p}\nu b^{-(p-1)/p}
 \right\}\le\frac14.
\]
\paragraph{Probability amplification.}
The blocks are independent. A Chernoff bound shows that, for a sufficiently large absolute choice of $c_0$, with probability at least $1-\delta$ more than half of the block means lie in the ball centered at $\mu$ with radius
\[
 r_p:=(4C_p)^{1/p}\nu b^{-(p-1)/p}.
\]
\paragraph{Geometric-median aggregation.}
On this event, the geometric-median stability lemma of
\citet{minsker2015geom} implies that any geometric median of the block means is at distance at most a universal constant times $r_p$ from $\mu$. Since $b\ge c_1n/k$ when $n\ge c k$, we conclude that
\[
 \|\widehat\mu-\mu\|
 \le K_p\nu
 \left(\frac{k}{n}\right)^{(p-1)/p}
 \le K_p\nu
 \left(\frac{\log(2/\delta)}{n}\right)^{(p-1)/p}.
\]
All constants depending on the Hilbert-space moment inequality and on the geometric-median stability factor are absorbed into $K_p$.
\end{proof}

Let
$\kappa=K_p(\log(2/\delta)/n_0)^{(p-1)/p}$ and $\widehat r_0=\|\widehat g_0\|$.

\subsection{Convex initialization}
Assume the auxiliary set $Q$ is already known and contains both $x_0$ and $x^\star$. Define
$r_0:=\|\nabla f(x_0)\|$.
\paragraph{Gap bound from convexity.}
Convexity at $x_0$ gives
\[
 F(x_0)
 =f(x_0)-f(x^\star)
 \le\ip{\nabla f(x_0)}{x_0-x^\star}
 \le r_0\|x_0-x^\star\|
 \le Dr_0.
\]
\paragraph{Conditional moment bound.}
Consequently, Assumption~\ref{ass:oracle} at $x_0$ gives the $p$th-moment scale
\begin{align*}
 \left(
 \sigma_0^p+\sigma_1^pr_0^p
 +\sigma_2^pF(x_0)^{p/2}
 \right)^{1/p}
 &\le
 \left(
 \sigma_0^p+\sigma_1^pr_0^p
 +\sigma_2^p(Dr_0)^{p/2}
 \right)^{1/p}\le\sigma_0+\sigma_1r_0+\sigma_2\sqrt{Dr_0},
\end{align*}
where the last step is the triangle inequality for the $\ell_p$ norm of three nonnegative scalars. On the good event from Lemma~\ref{lem:GMoM},
\[
 \|\widehat g_0-\nabla f(x_0)\|
 \le\kappa\left(
 \sigma_0+\sigma_1r_0+\sigma_2\sqrt{Dr_0}
 \right).
\]
\paragraph{Observable upper bound.}
The triangle inequality $r_0\le\|\widehat g_0\|+\|\widehat g_0-\nabla f(x_0)\|$ therefore yields
\[
 r_0\le \widehat r_0+\kappa\sigma_0+\kappa\sigma_1r_0+\kappa\sigma_2\sqrt{Dr_0}.
\]
\paragraph{Quadratic inequality.}
If $\kappa\sigma_1<1$, define
$\alpha=1-\kappa\sigma_1$, $\beta=\kappa\sigma_2\sqrt D$, and $\gamma=\widehat r_0+\kappa\sigma_0$. With $y=\sqrt{r_0}$,
$\alpha y^2-\beta y-\gamma\le0$. Since $\alpha>0$ and $y\ge0$, the quadratic inequality places $y$ below its nonnegative root:
\[
 y\le\frac{\beta+\sqrt{\beta^2+4\alpha\gamma}}{2\alpha}.
\]
Squaring and using $F(x_0)\le Dr_0$ gives
\begin{equation}\label{eq:init-cvx}
 r_0\le \bar r_0^{\rm cvx}:=
 \left(\frac{\beta+\sqrt{\beta^2+4\alpha\gamma}}{2\alpha}\right)^2,
 \qquad \widehat\Delta_0^{\rm cvx}:=D\bar r_0^{\rm cvx}.
\end{equation}
Thus \eqref{eq:init-cvx} makes $\widehat\Delta_0^{\rm cvx}$ a computable upper bound on $F(x_0)$ on the geometric-median-of-means event. It suffices to choose $n_0$ such that
$\kappa\sigma_1\le1/2$, an accuracy-independent condition that keeps the denominator uniformly away from zero.

\subsection{Strongly convex initialization}
\paragraph{Gap bound from strong convexity.}
For a differentiable $\mu$-strongly convex function, Assumption~\ref{ass:convexity} with $x=x_0$ and $y=x^\star$ gives
\[
 F(x_0)
 \le\ip{\nabla f(x_0)}{x_0-x^\star}
 -\frac{\mu}{2}\|x_0-x^\star\|^2.
\]
Maximizing the right-hand side over
$s=\|x_0-x^\star\|\ge0$ and using Cauchy--Schwarz yields
\[
 F(x_0)\le\max_{s\ge0}
 \left(r_0s-\frac{\mu}{2}s^2\right)
 =\frac{r_0^2}{2\mu}.
\]
\paragraph{Effective relative noise.}
Define the effective relative coefficient
\begin{equation}\label{eq:sig1eff}
 \bar\sigma_{1,\mu}:=
 \left(\sigma_1^p+\frac{\sigma_2^p}{(2\mu)^{p/2}}\right)^{1/p}.
\end{equation}
The definition in \eqref{eq:sig1eff} combines the two oracle-moment terms that are proportional to $r_0^p$.
The oracle moment at $x_0$ is then bounded by
\begin{align*}
 \sigma_0^p+\sigma_1^pr_0^p+\sigma_2^pF(x_0)^{p/2}
 &\le\sigma_0^p+
 \left(\sigma_1^p+
 \frac{\sigma_2^p}{(2\mu)^{p/2}}\right)r_0^p\\
 &=\sigma_0^p+\bar\sigma_{1,\mu}^pr_0^p.
\end{align*}
Taking the $p$th root, applying Lemma~\ref{lem:GMoM}, and using the triangle inequality gives
\[
 r_0\le\widehat r_0+\kappa\sigma_0
 +\kappa\bar\sigma_{1,\mu}r_0.
\]
\paragraph{Observable upper bound.}
If $\kappa\bar\sigma_{1,\mu}<1$, rearrangement yields
\begin{equation}\label{eq:init-sc}
 \bar r_0^{\rm sc}:=\frac{\widehat r_0+\kappa\sigma_0}{1-\kappa\bar\sigma_{1,\mu}},
 \qquad
 \widehat\Delta_0^{\rm sc}:=\frac{(\bar r_0^{\rm sc})^2}{2\mu}.
\end{equation}
Equation~\ref{eq:init-sc} and the first inequality above then give
$F(x_0)\le\widehat\Delta_0^{\rm sc}$ on the geometric-median-of-means event. Splitting the total failure probability, for example assigning $\rho/2$ to initialization and $\rho/2$ to the restart run, gives an end-to-end success probability at least $1-\rho$. The construction uses sampled stochastic gradients and known upper bounds on the model parameters, but no objective values, value of $f^\star$, or exact gradients.

\section{Endpoints and Special Cases}\label{app:endpoints}
\subsection{$p=2$}
When $p=2$, the conjugate exponent is $q=2$. Assumption~\ref{ass:oracle} becomes
\[
 \E_t\|g(x,\xi_t)-\nabla f(x)\|^2\le\sigma_0^2+\sigma_1^2\|\nabla f(x)\|^2+\sigma_2^2F(x).
\]
No step in Lemma~\ref{lem:clipping} becomes singular: its clipping-bias bound is
$2\varsigma_{2,\Delta}^2/\Lambda$, and its conditional second-moment bound is
$2\varsigma_{2,\Delta}^2$.

\paragraph{Convex specialization.}
Substituting $q=2$ into Theorem~\ref{thm:convex} gives
\[
 \widetilde O_{\rho}\!\left[
 \begin{aligned}
 &S+D\left(\sqrt{\frac{H_0}{\varepsilon}}
 +\sqrt{H_1}\,J_H\right)
 +\left(\frac{\sigma_0D}{\varepsilon}\right)^2\\
 &\quad
 +\left(\sigma_1D\left[
 \sqrt{\frac{H_0}{\varepsilon}}
 +\sqrt{H_1}\,J_H^{1/2}
 \right]\right)^2
 +\left(\frac{\sigma_2D}{\sqrt\varepsilon}\right)^2
 \end{aligned}
 \right].
\]
Thus the three pure stochastic accuracy scales are
$\varepsilon^{-2}$ for residual noise and
$\varepsilon^{-1}$ for the gradient- and gap-dependent components, while the displayed square retains the $H_0$--$H_1$ cross term.

\paragraph{Strongly convex specialization.}
Theorem~\ref{thm:strong} specializes to
\[
 \widetilde O_{\rho}\!\left[
 \left(1+\sqrt{\frac{H_0}{\mu}}\right)S
 +\sqrt{\frac{H_1\Delta_0}{\mu}}
 +\frac{\sigma_0^2}{\mu\varepsilon}
 +\frac{\sigma_1^2}{\mu}
 \left(\sqrt{H_0S}+\sqrt{H_1\Delta_0}\right)^2
 +\frac{\sigma_2^2}{\mu}S
 \right].
\]
Only the $\sigma_0$ term is polynomial in the target accuracy; the terms multiplied by $S$ are logarithmic because
$S=\lceil\log_2(\Delta_0/\varepsilon)\rceil$.

\subsection{Norm-sub-Gaussian endpoint}
If instead the centered oracle is conditionally norm-sub-Gaussian with a state-dependent proxy comparable to
$\sigma_0^2+\sigma_1^2\|\nabla f(x)\|^2+\sigma_2^2F(x)$, localization again creates the phase proxy
$\nu_{\rm SG,\Delta}^2=\sigma_0^2+\sigma_1^2G_\Delta^2+4\sigma_2^2\Delta$.
The finite-$p$ clipping proof remains applicable with $p=2$ and already gives the accuracy exponents displayed above. An unclipped mini-batch alternative would require a separately stated vector sub-Gaussian mean-concentration inequality, including its precise definition of the proxy and its numerical constants. Because those data are not part of the present assumptions, no theorem in this paper relies on that alternative; we record it only as a possible implementation endpoint.

\subsection{Deterministic limit and other audit cases}
\paragraph{No stochastic noise.}
If $\sigma_0=\sigma_1=\sigma_2=0$, then
$g(x,\xi_t)=\nabla f(x)$ almost surely under
Assumption~\ref{ass:oracle}. Hence
$\zeta_t=\beta_t=0$, the two martingale channels vanish, and
\eqref{eq:pathwise} reduces to $\Phi_t\le\Phi_{t-1}$. The restart sums retain only the constant and deterministic curvature terms.

\paragraph{No residual noise.}
If $\sigma_0=0$, the additive term in
\eqref{eq:app-phase-rate} disappears. In the strongly convex case, the remaining $\sigma_1$ contribution is the sum of a constant per phase and a decreasing geometric sequence, while the $\sigma_2$ contribution is constant per phase. Thus the target-accuracy dependence is logarithmic through $S$, with no term proportional to a positive power of $1/\varepsilon$.

\paragraph{No gradient-dependent noise.}
If $\sigma_1=0$, the gradient-dependent term is absent from
$\varsigma_{p,\Delta}$. The crossover calculation with exponent $q/2$ is then unnecessary; the stochastic complexity contains only the additive and gap-dependent geometric sums.

\paragraph{No gap-dependent noise.}
If $\sigma_2=0$, the term
$(\sigma_2D/\sqrt\Delta)^q$ vanishes in every phase. The analysis and clipping threshold remain valid without modification because all inequalities were upper bounds by sums of nonnegative noise components.

\paragraph{Globally smooth endpoint.}
If $H_1=0$, then $r_H=\infty$, $C_\star=2$, and
$\mathcal H_\Delta=H_0$. Every local-model radius check becomes automatic, $J_H=0$, and the deterministic part reduces to the classical accelerated dependence
$D\sqrt{H_0/\varepsilon}$ in the convex case and
$\sqrt{H_0/\mu}\,S$ in the strongly convex case. Since our convention is $L=2H_0$, this is the usual globally $L$-smooth subclass up to the fixed factor $\sqrt2$.

These reductions verify that the unified formulas remain well-defined at every endpoint and that no proof step divides by a parameter that may be zero.

\section{Horizon-Driven Parameter-Free Inner Method}\label{app:parameterfree}
This section removes $p$, $\sigma_0,\sigma_1,\sigma_2,H_0,H_1$ from the \emph{runtime inputs} of a phase. They remain in an a posteriori sufficient horizon condition. The phase still receives $(w,\Delta,Q,D,T,\rho)$, and the restart layer still needs a valid phase level; strong convexity uses $\mu$ to form shrinking balls.

\subsection{A radius-aware self-bound}
For analysis only, define
\begin{equation}\label{eq:Gamma}
 \Gamma_\Delta:=H_0+4H_1\Delta+\frac{4\Delta}{r_H^2},
\end{equation}
with $r_H^{-2}=0$ if $r_H=\infty$.
The quantity in \eqref{eq:Gamma} upper-bounds every curvature and radius term that appears after localization to $F(x)\le4\Delta$.
\begin{lemma}[Radius-aware self-bound]\label{lem:radius-aware}
For every $x$,
\[
 \|\nabla f(x)\|^2\le4F(x)\left(H_0+H_1F(x)+\frac{F(x)}{r_H^2}\right).
\]
\end{lemma}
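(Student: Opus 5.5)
Fix $x$ with $\nabla f(x)\neq0$ (otherwise there is nothing to prove). Write $g:=\nabla f(x)$, $F:=F(x)$ and $\mathcal H:=\mathcal H(x)=H_0+H_1F$. The plan is a single descent step from $x$ along $-g$. Its length is chosen as the smaller of the quadratic-model minimizer and the admissible radius $r_H$. We then compare the resulting value with $f^\star$, exactly as in the second half of the proof of Lemma~\ref{lem:selfbound}, but without first invoking the crude bound $r_H^{-1}\|g\|\le C_\star\mathcal H$.

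Consider $y=x-\eta g/\|g\|$ with $\eta:=\min\{\|g\|/(2\mathcal H),\,r_H\}$, where we read $\|g\|/(2\mathcal H)=\infty$ if $\mathcal H=0$. Since $\eta\le r_H$, Assumption~\ref{ass:h01} applies and gives
\[
 f^\star\le f(y)\le f(x)-\eta\|g\|+\mathcal H\eta^2 .
\]
We split into two cases according to which term attains the minimum.

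\emph{Case 1:} $\|g\|/(2\mathcal H)\le r_H$. Then $\eta=\|g\|/(2\mathcal H)$, and the display gives $F\ge\|g\|^2/(4\mathcal H)$. Hence $\|g\|^2\le4F\mathcal H$, which is at most the claimed right-hand side. If $\mathcal H=0$ this case cannot occur with $g\ne0$, because $F\ge\eta\|g\|$ for every admissible $\eta$ forces $r_H<\infty$; so $\mathcal H>0$ here.

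\emph{Case 2:} $r_H<\|g\|/(2\mathcal H)$. Then $\eta=r_H$ and $\mathcal H r_H<\|g\|/2$, so $\mathcal Hr_H^2<r_H\|g\|/2$. The display yields
\[
 F\ge r_H\|g\|-\mathcal Hr_H^2\ge \tfrac12 r_H\|g\| ,
\]
that is, $\|g\|\le2F/r_H$. Squaring gives $\|g\|^2\le 4F^2/r_H^2=4F\cdot F/r_H^2$, again bounded by the claim. When $H_1=0$ we have $r_H=\infty$, so only Case 1 occurs and the convention $r_H^{-2}=0$ is consistent.

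Combining the two cases, and using that each bound is dominated by $4F(\mathcal H+F/r_H^2)$, gives the lemma. The only delicate point is the case bookkeeping. Specifically, we must make sure that the Case 2 threshold inequality $\mathcal H r_H<\|g\|/2$ is exactly what absorbs the quadratic term, and we must handle the degenerate situations $\mathcal H=0$ or $r_H=\infty$ without dividing by zero.
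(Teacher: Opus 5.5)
Your proof is correct and follows the paper's argument: a single step along $-\nabla f(x)$ of length $\min\{\|\nabla f(x)\|/(2\mathcal H(x)),\,r_H\}$. The interior case gives $\|\nabla f(x)\|^2\le4F(x)\mathcal H(x)$, and in the full-radius case the threshold $\mathcal H(x)r_H<\|\nabla f(x)\|/2$ yields $\|\nabla f(x)\|\le2F(x)/r_H$. The only cosmetic difference is that you fold the degenerate case $\mathcal H(x)=0$, $r_H<\infty$ into your Case~2, whereas the paper treats it separately and obtains the slightly stronger bound $\|\nabla f(x)\|\le F(x)/r_H$.
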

\begin{proof}
Fix $x$ and abbreviate
$g:=\|\nabla f(x)\|$ and $H_x:=\mathcal H(x)$. We compare the unconstrained quadratic-model step with the local-model radius, use the shorter of the two admissible steps, and then handle the zero-curvature cases separately. If $g=0$, the claim is immediate. Suppose first that $H_x>0$ and the unconstrained quadratic-model step lies inside the validity radius:
\[
 \frac{g}{2H_x}\le r_H.
\]
\paragraph{Interior quadratic-model step.}
Set
$y=x-\nabla f(x)/(2H_x)$. Then
$\|y-x\|=g/(2H_x)\le r_H$, so Assumption~\ref{ass:h01} applies and gives
\begin{align*}
 f(y)
 &\le f(x)
 -\frac{g^2}{2H_x}
 +H_x\frac{g^2}{4H_x^2}
 =f(x)-\frac{g^2}{4H_x}.
\end{align*}
Since $f^\star\le f(y)$, it follows that
$F(x)\ge g^2/(4H_x)$, or
\[
 g^2\le4F(x)H_x.
\]

\paragraph{Full-radius step.}
It remains to treat the case in which the quadratic-model step would be longer than $r_H$, namely
$g/(2H_x)>r_H$. Take the full-radius step
\[
 y=x-r_H\frac{\nabla f(x)}{g}.
\]
The local model and $f^\star\le f(y)$ give
\[
 F(x)\ge r_Hg-H_xr_H^2.
\]
The case condition implies $H_xr_H<g/2$, and hence
\[
 F(x)>\frac{r_Hg}{2},
 \qquad
 g^2<\frac{4F(x)^2}{r_H^2}.
\]
The first case is bounded by the first term and the second case by the second term in
\[
 4F(x)\left(H_x+\frac{F(x)}{r_H^2}\right).
\]
This proves the result when $H_x>0$.

\paragraph{Degenerate curvature cases.}
If $H_x=0$ and $r_H<\infty$, the same full-radius step gives
$F(x)\ge r_Hg$ and therefore the stronger bound
$g^2\le F(x)^2/r_H^2$. If $H_x=0$ and $r_H=\infty$, the upper model is
$f(y)\le f(x)+\ip{\nabla f(x)}{y-x}$ for every $y$. Choosing
$y=x-t\nabla f(x)$ and letting $t\to\infty$ would force
$f^\star=-\infty$ unless $g=0$. Since a minimizer exists, $g=0$. Thus all degenerate cases are covered.
\end{proof}

\subsection{Runtime algorithm}
Set as before $A_0=D^2/\Delta$, $A_{\rm tar}=6D^2/\Delta$, and for a user-chosen horizon $T$,
\[
 a=\frac{5D^2}{\Delta T},\qquad A_t=A_0+ta,
 \qquad \ell_\rho=\log\frac8\rho,
 \qquad \Lambda_{\Delta,T}=\frac{\Delta T}{128D\ell_\rho}.
\]
The runtime update is exactly Procedure~\ref{proc:phase}, but $\Lambda$ is replaced by $\Lambda_{\Delta,T}$ and no hidden model parameters are read.

\begin{lemma}[Horizon-induced localization]\label{lem:pf-local}
If $\Phi_{t-1}\le3D^2$ and
\begin{equation}\label{eq:pf-geom-horizon}
 T\ge40D\sqrt{\frac{\Gamma_\Delta}{\Delta}},
\end{equation}
then $F(y_{t-1})\le3\Delta$, $F(u_t)<4\Delta$, both local-model displacements are at most $r_H$,
$\mathcal H(u_t)a^2/A_t\le1/64$, and
\[
 \|\nabla f(u_t)\|\le4\sqrt{\Delta\Gamma_\Delta}.
\]
\end{lemma}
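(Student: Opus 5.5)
We follow the proof of Lemma~\ref{lem:safe-queries}, with $\Gamma_\Delta$ taking the place of $M=32C_\star\mathcal H_\Delta$ and Lemma~\ref{lem:radius-aware} taking the place of Lemma~\ref{lem:selfbound}.

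\textbf{Weight control.} Condition~\eqref{eq:pf-geom-horizon} together with $a=5D^2/(\Delta T)$ gives
\[
 \Gamma_\Delta a^2=\frac{25\Gamma_\Delta D^4}{\Delta^2T^2}\le\frac{25}{1600}\,\frac{D^2}{\Delta}=\frac{A_0}{64}\le\frac{A_t}{64}.
\]
Hence $\lambda_t:=a/A_t$ satisfies $\lambda_t^2\le a^2/(A_tA_0)\le 1/(64\Gamma_\Delta A_0)$, and therefore
\[
 \lambda_tD\le\frac18\sqrt{\frac{\Delta}{\Gamma_\Delta}}.
\]

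\textbf{Localization.} As before, $\Phi_{t-1}\le3D^2$ and $A_{t-1}\ge A_0=D^2/\Delta$ give $F(y_{t-1})\le3\Delta$. Convexity of $Q$ keeps $y_j,z_j\in Q$, so both displacements $u_t-y_{t-1}=\lambda_t(z_{t-1}-y_{t-1})$ and $y_t-u_t=\lambda_t(z_t-z_{t-1})$ have norm at most $\lambda_tD$. For the radius check, use $\Gamma_\Delta\ge4\Delta/r_H^2$, which gives $\sqrt{\Delta/\Gamma_\Delta}\le r_H/2$. Thus both displacements are at most $r_H/16\le r_H$, and the case $r_H=\infty$ is trivial.

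\textbf{Query gap.} Apply Assumption~\ref{ass:h01} from $y_{t-1}$ to $u_t$.
\begin{itemize}
\item Since $F(y_{t-1})\le3\Delta$, Lemma~\ref{lem:radius-aware} gives $\|\nabla f(y_{t-1})\|^2\le12\Delta\bigl(H_0+3H_1\Delta+3\Delta/r_H^2\bigr)\le12\Delta\Gamma_\Delta$.
\item Also $\mathcal H(y_{t-1})\le H_0+3H_1\Delta\le\Gamma_\Delta$.
\end{itemize}
Combining these,
\[
 F(u_t)\le3\Delta+\sqrt{12\Delta\Gamma_\Delta}\cdot\frac18\sqrt{\frac{\Delta}{\Gamma_\Delta}}+\Gamma_\Delta\cdot\frac{\Delta}{64\Gamma_\Delta}=\Bigl(3+\tfrac{\sqrt{12}}{8}+\tfrac1{64}\Bigr)\Delta<4\Delta,
\]
because $\sqrt{12}/8\approx0.433$.

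\textbf{Curvature coefficient and gradient bound.} From $F(u_t)<4\Delta$ we get $\mathcal H(u_t)\le H_0+4H_1\Delta\le\Gamma_\Delta$. Combined with the weight control, this gives $\mathcal H(u_t)a^2/A_t\le\Gamma_\Delta a^2/A_t\le1/64$. Finally, Lemma~\ref{lem:radius-aware} at $u_t$ with $F(u_t)\le4\Delta$ gives
\[
 \|\nabla f(u_t)\|^2\le16\Delta\bigl(H_0+4H_1\Delta+4\Delta/r_H^2\bigr)=16\Delta\Gamma_\Delta,
\]
which is the claimed bound $\|\nabla f(u_t)\|\le4\sqrt{\Delta\Gamma_\Delta}$.

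\textbf{Main obstacle.} The only delicate step is the numerical bookkeeping in the query-gap estimate: the constant $40$ in~\eqref{eq:pf-geom-horizon} must make the Cauchy--Schwarz cross term small enough that the total stays below $4\Delta$. The choice of $\Gamma_\Delta$ must also absorb the $F/r_H^2$ term of Lemma~\ref{lem:radius-aware}. Both are checked in the display above.
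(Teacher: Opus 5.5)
Your proof is correct and matches the paper's argument step for step: the same output-gap bound, the same displacement bound $\tfrac18\sqrt{\Delta/\Gamma_\Delta}\le r_H/16$, the same three-term query-gap estimate via Lemma~\ref{lem:radius-aware}, and the same final curvature and gradient checks. The only cosmetic difference is that the paper bounds the mixing weight directly by $a/A_t\le a/A_0=5/T$, whereas you obtain it from the weight-control inequality $\Gamma_\Delta a^2\le A_0/64$, mirroring Lemma~\ref{lem:safe-queries}.
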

\begin{proof}
Fix $t\in\{1,\ldots,T\}$ and assume
$\Phi_{t-1}\le3D^2$ together with the horizon condition
\eqref{eq:pf-geom-horizon}. We first control the previous output gap and both accelerated displacements, then apply the local upper model to the query point, and finally verify the quadratic coefficient and the query-gradient bound.

\paragraph{Output gap and displacements.}
The potential bound and $A_{t-1}\ge A_0=D^2/\Delta$ first give
\[
 F(y_{t-1})
 \le\frac{\Phi_{t-1}}{A_{t-1}}
 \le\frac{3D^2}{D^2/\Delta}
 =3\Delta.
\]
Next, $a/A_t\le a/A_0=5/T$. Since projection keeps every $z_j$ in $Q$, and each $y_j$ is a convex combination of points in $Q$, all $y_j,z_j$ belong to $Q$. Therefore
\begin{align}
 \|u_t-y_{t-1}\|
 &=\frac{a}{A_t}\|z_{t-1}-y_{t-1}\|
 \le\frac{5D}{T},
 \label{eq:pf-first-displacement}\\
 \|y_t-u_t\|
 &=\frac{a}{A_t}\|z_t-z_{t-1}\|
 \le\frac{5D}{T}.
 \label{eq:pf-second-displacement}
\end{align}
Under \eqref{eq:pf-geom-horizon},
\[
 \frac{5D}{T}
 \le\frac18\sqrt{\frac{\Delta}{\Gamma_\Delta}}.
\]
Because $\Gamma_\Delta\ge4\Delta/r_H^2$, the last quantity is at most
$r_H/16$. Thus both displacements in
\eqref{eq:pf-first-displacement}--\eqref{eq:pf-second-displacement}
are within the radius of Assumption~\ref{ass:h01}.

\paragraph{Query gap.}
At $y_{t-1}$, Lemma~\ref{lem:radius-aware} and
$F(y_{t-1})\le3\Delta$ give
\begin{align}
 \|\nabla f(y_{t-1})\|^2
 &\le4F(y_{t-1})
 \left(
 H_0+H_1F(y_{t-1})
 +\frac{F(y_{t-1})}{r_H^2}
 \right)\le12\Delta\Gamma_\Delta.
 \label{eq:pf-gradient-at-output}
\end{align}
Apply the local upper model from $y_{t-1}$ to $u_t$. Using
\eqref{eq:pf-first-displacement},
\eqref{eq:pf-gradient-at-output}, and
$\mathcal H(y_{t-1})\le\Gamma_\Delta$, we obtain
\begin{align*}
 F(u_t)
 &\le F(y_{t-1})
 +\|\nabla f(y_{t-1})\|
 \|u_t-y_{t-1}\|
 +\mathcal H(y_{t-1})
 \|u_t-y_{t-1}\|^2\\
 &\le3\Delta
 +\sqrt{12\Delta\Gamma_\Delta}
 \frac18\sqrt{\frac{\Delta}{\Gamma_\Delta}}
 +\Gamma_\Delta
 \frac1{64}\frac{\Delta}{\Gamma_\Delta}=\left(3+\frac{\sqrt{12}}8+\frac1{64}\right)\Delta
 <4\Delta.
\end{align*}
Consequently,
$\mathcal H(u_t)=H_0+H_1F(u_t)
\le H_0+4H_1\Delta\le\Gamma_\Delta$.

\paragraph{Quadratic coefficient and query gradient.}
Finally,
\[
 \frac{a^2}{A_t}
 \le\frac{a^2}{A_0}
 =\frac{25D^2}{\Delta T^2}.
\]
Combining this inequality with
$T^2\ge1600D^2\Gamma_\Delta/\Delta$ gives
\[
 \frac{\mathcal H(u_t)a^2}{A_t}
 \le\frac{25D^2\Gamma_\Delta}{\Delta T^2}
 \le\frac1{64}.
\]
Applying Lemma~\ref{lem:radius-aware} once more, now at $u_t$, and using
$F(u_t)<4\Delta$, yields
\[
 \|\nabla f(u_t)\|^2
 \le4F(u_t)\Gamma_\Delta
 <16\Delta\Gamma_\Delta.
\]
Taking square roots proves the stated gradient bound and completes the localization argument.
\end{proof}

Assume the actual but unknown oracle parameters obey Assumption~\ref{ass:oracle}. Define only for analysis
\begin{equation}\label{eq:nubar}
 \bar\varsigma_{p,\Delta}^p
 :=\sigma_0^p+\bigl(4\sigma_1\sqrt{\Delta\Gamma_\Delta}\bigr)^p
 +\sigma_2^p(4\Delta)^{p/2}.
\end{equation}

\begin{theorem}[Horizon-driven parameter-free contraction]\label{thm:pf}
The runtime trajectory described above does not use $p$, $\sigma_i$, or $H_i$. Nevertheless, if the actual parameters satisfy the assumptions and
\begin{equation}\label{eq:pf-horizon}
 T\ge\max\left\{
 1024\ell_\rho D\sqrt{\frac{\Gamma_\Delta}{\Delta}},\ 
 128\,160^{1/(p-1)}\ell_\rho
 \left(\frac{\bar\varsigma_{p,\Delta}D}{\Delta}\right)^q
 \right\},
\end{equation}
then $\Prb\{F(y_T)\le\Delta/2\}\ge1-\rho$.
\end{theorem}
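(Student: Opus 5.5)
The proof replays the proof of Theorem~\ref{thm:phase} (Appendix~\ref{app:phaseproof}). Lemma~\ref{lem:pf-local} replaces Lemma~\ref{lem:safe-queries}, and the horizon condition \eqref{eq:pf-horizon} replaces the parameter-dependent choice of $\Lambda$. Since $\ell_\rho>1$, the first branch of \eqref{eq:pf-horizon} gives $T\ge1024D\sqrt{\Gamma_\Delta/\Delta}\ge40D\sqrt{\Gamma_\Delta/\Delta}$. So \eqref{eq:pf-geom-horizon} holds, and whenever $\Phi_{t-1}\le3D^2$ we have $F(u_t)<4\Delta$, $\|\nabla f(u_t)\|\le4\sqrt{\Delta\Gamma_\Delta}=:\bar G_\Delta$, valid local displacements, and $\mathcal H(u_t)a^2/A_t\le1/64$.

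\textbf{One-step and clipping bounds.} The proof of Lemma~\ref{lem:one-step} goes through verbatim with the coefficient $1/16$ replaced by $1/64$, giving \eqref{eq:pathwise}. For Lemma~\ref{lem:clipping} I need $\Lambda_{\Delta,T}\ge2\bar G_\Delta$. This follows from the first branch: $\Lambda_{\Delta,T}=\Delta T/(128D\ell_\rho)\ge8\sqrt{\Delta\Gamma_\Delta}=2\bar G_\Delta$. Assumption~\ref{ass:oracle} at $u_t$ then gives the moment bound $\bar\varsigma_{p,\Delta}^p$ from \eqref{eq:nubar}, and Lemma~\ref{lem:clipping} yields $\|\zeta_t\|\le2\Lambda$, $\|\beta_t\|\le2\bar\varsigma^p\Lambda^{1-p}$ and $v_t\le2\bar\varsigma^p\Lambda^{2-p}$.

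\textbf{Bias and variance via the second branch.} The key translation is that the second branch of \eqref{eq:pf-horizon} is equivalent to a lower bound on $\Lambda^{p-1}$. Writing $\Lambda=\Delta T/(128D\ell_\rho)$, the condition $T\ge128\cdot160^{1/(p-1)}\ell_\rho(\bar\varsigma D/\Delta)^q$ becomes $\Lambda\ge160^{1/(p-1)}(\bar\varsigma D/\Delta)^{q}\Delta/D$. Raising to the power $p-1$ and using $q(p-1)=p$ gives
\[
\Lambda^{p-1}\ge\frac{160\,\bar\varsigma^pD}{\Delta},
\]
which is stronger than \eqref{eq:app-threshold-power}. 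Hence $\|\beta_t\|\le\Delta/(80D)$ and $v_t\le\Delta\Lambda/(80D)$.

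\textbf{Martingale bounds and stopping argument.} Put $R:=D\Lambda/\Delta=T/(128\ell_\rho)$, so $R/T=1/(128\ell_\rho)$ exactly. This is the ratio that controls every martingale term, replacing the bound $R/T\le1/(1024\ell_\rho)$ used before. Recomputing the analogues of \eqref{eq:app-det1}--\eqref{eq:app-m2}:
\begin{itemize}
\item the bias drift is at most $TaD\cdot\Delta/(80D)=D^2/16$;
\item the variance drift is at most $2Ta^2\Delta\Lambda/(80D)=(50/80)(R/T)D^2$, which is $O(D^2/128)$;
\item the linear channel has increments $10D^2R/T$ and variance $\lesssim(25/80)D^4R/T$;
\item the quadratic channel has increments $200D^2R^2/T^2$ and an even smaller variance.
\end{itemize}
With $u=D^2/8$, Freedman's exponent for the linear channel is $u^2/(2(V+Lu/3))$, which is at least $\ell_\rho\cdot64^{-1}/(2(25/80+10/24)/128)\ge\ell_\rho$ after checking the arithmetic. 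Summing the stopped recursion as in Appendix~\ref{app:phaseproof} then gives
\[
\Phi_{j\wedge\tau}\le\tfrac32D^2+\tfrac1{16}D^2+\tfrac18D^2+\tfrac18D^2+o(D^2)<3D^2,
\]
which rules out $\tau\le T$. Finally, $A_T=6D^2/\Delta$ gives $F(y_T)\le\Delta/2$.

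The main obstacle is this constant bookkeeping. The factor $128$ gives much less slack than $1024$, so the Freedman inequality for the linear channel and the total drift budget must be rechecked carefully. If the constants do not close, I would first tighten the $\|\zeta_t+\beta_t\|^2$ split, or take $u$ smaller for the quadratic channel, rather than change the runtime rule.
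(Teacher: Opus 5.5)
Your proposal is correct and follows essentially the same route as the paper. You localize via Lemma~\ref{lem:pf-local}, use the first branch of \eqref{eq:pf-horizon} to get $\Lambda_{\Delta,T}\ge8\sqrt{\Delta\Gamma_\Delta}$ and the second to get $\Lambda_{\Delta,T}^{p-1}\ge160\,\bar\varsigma_{p,\Delta}^pD/\Delta$, and then rerun the stopped-potential/Freedman argument with $R/T=1/(128\ell_\rho)$; your linear-channel exponent $\approx1.37\,\ell_\rho$ is the paper's check $35/3072\le1/64$. The constants close with room to spare: the squared-bias term $D^2/(128T)$ and the variance drift $5D^2/(1024\ell_\rho)<D^2/400$ that you lump into ``$o(D^2)$'' are explicit small constants, so no fallback tightening is needed.
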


\begin{proof}
Fix the phase data $(w,\Delta,Q,D,T,\rho)$ and the filtration from
Assumption~\ref{ass:oracle}. Recall that
$a=5D^2/(\Delta T)$,
$A_t=D^2/\Delta+ta$, and
$\Lambda_{\Delta,T}=\Delta T/(128D\ell_\rho)$.
At iteration $t$, write
\[
 \beta_t:=\E_t\widehat g_t-\nabla f(u_t),
 \qquad
 \zeta_t:=\widehat g_t-\E_t\widehat g_t,
 \qquad
 v_t:=\E_t\|\zeta_t\|^2.
\]
The proof first stops the potential before localization can fail, then verifies the clipping bias and conditional moment bounds on the stopped path, applies Freedman's inequality to the two centered channels, and finally removes the stopping time by contradiction.

\paragraph{Stopped process and localization.}
Set
\[
 \tau:=\inf\{t\in\{1,\ldots,T\}:\Phi_t>3D^2\},
 \qquad
 I_t:=\mathbf1\{t-1<\tau\},
\]
with $\inf\varnothing=T+1$. The potential $\Phi_j$ is
$\mathcal F_j$-measurable, and
\[
 \{t-1<\tau\}
 =\bigcap_{j=1}^{t-1}\{\Phi_j\le3D^2\}
 \in\mathcal F_{t-1}.
\]
Therefore $I_t$ is predictable. Moreover, $I_t=1$ implies that the localization premise
$\Phi_{t-1}\le3D^2$ holds. The first condition in
\eqref{eq:pf-horizon} is stronger than
\eqref{eq:pf-geom-horizon}, so Lemma~\ref{lem:pf-local} applies before $\tau$.

\paragraph{Clipping radius and conditional moment bound.}
The same horizon condition also makes the clipping radius large enough. Indeed,
\[
 2\|\nabla f(u_t)\|\le8\sqrt{\Delta\Gamma_\Delta}
 \le\frac{\Delta T}{128D\ell_\rho}=\Lambda_{\Delta,T},
\]
where the second inequality is exactly
$T\ge1024\ell_\rho D\sqrt{\Gamma_\Delta/\Delta}$ after rearrangement. Hence
$\Lambda_{\Delta,T}\ge2\|\nabla f(u_t)\|$, and the proof of Lemma~\ref{lem:clipping} applies with the localized moment scale
$\bar\varsigma_{p,\Delta}$ from \eqref{eq:nubar}.

\paragraph{Clipping bias and centered second moment.}
The second horizon condition implies
\begin{equation}\label{eq:pf-bias-cond}
 \Lambda_{\Delta,T}^{p-1}\ge160\bar\varsigma_{p,\Delta}^p\frac{D}{\Delta}.
\end{equation}
To verify \eqref{eq:pf-bias-cond}, the second term in
\eqref{eq:pf-horizon} and the definition of the clipping radius give
\begin{align*}
 \Lambda_{\Delta,T}
 &=\frac{\Delta T}{128D\ell_\rho}\ge160^{1/(p-1)}\frac{\Delta}{D}
 \left(\frac{\bar\varsigma_{p,\Delta}D}{\Delta}\right)^q=\left(
 160\bar\varsigma_{p,\Delta}^p\frac{D}{\Delta}
 \right)^{1/(p-1)}.
\end{align*}
The last equality uses $q=p/(p-1)$, so raising both sides to the positive power $p-1$ proves \eqref{eq:pf-bias-cond}. Lemma~\ref{lem:clipping} and
\eqref{eq:pf-bias-cond} now give, before $\tau$,
\[
 \|\beta_t\|
 \le2\bar\varsigma_{p,\Delta}^p
 \Lambda_{\Delta,T}^{1-p}
 \le\frac{\Delta}{80D},
\]
and
\[
 v_t:=\E_t\|\zeta_t\|^2
 \le2\bar\varsigma_{p,\Delta}^p
 \Lambda_{\Delta,T}^{2-p}
 \le\frac{\Lambda_{\Delta,T}\Delta}{80D}.
\]

\paragraph{Stopped potential recursion.}
Lemma~\ref{lem:pf-local} gives
$\mathcal H(u_t)a^2/A_t\le1/64<1/16$, so the coefficient condition of Lemma~\ref{lem:one-step} holds. The two displacements are within $r_H$ by Lemma~\ref{lem:pf-local}, so both uses of the local upper model in that lemma are also valid. Multiplying its pathwise recursion by the predictable indicator $I_t$, using
$\|\zeta_t+\beta_t\|^2\le2\|\zeta_t\|^2+2\|\beta_t\|^2$, and adding and subtracting $v_t$ gives exactly the stopped recursion \eqref{eq:app-stopped-rec}.

\paragraph{Predictable contributions.}
We now verify each numerical channel for the new threshold. Since
$Ta=5D^2/\Delta$,
\[
 \sum_{t=1}^T aI_tD\|\beta_t\|
 \le TaD\frac{\Delta}{80D}
 =\frac{D^2}{16},
\]
while
\[
 2\sum_{t=1}^T a^2I_t\|\beta_t\|^2
 \le2Ta^2\frac{\Delta^2}{80^2D^2}
 =\frac{D^2}{128T}.
\]
For the conditional-variance contribution, use
$\Lambda_{\Delta,T}=\Delta T/(128D\ell_\rho)$:
\begin{align*}
 2\sum_{t=1}^T a^2I_tv_t
 &\le2Ta^2
 \frac{\Lambda_{\Delta,T}\Delta}{80D}=\frac{5D^2}{1024\ell_\rho}
 <\frac{D^2}{400},
\end{align*}
where the last inequality follows from
$\ell_\rho=\log(8/\rho)>\log8$.

\paragraph{Martingale terms.}
For $j\le T$, define
\[
 M_j^{(1)}:=-\sum_{t=1}^j aI_t
 \ip{\zeta_t}{z_{t-1}-x^\star},
 \qquad
 M_j^{(2)}:=2a^2\sum_{t=1}^jI_t
 (\|\zeta_t\|^2-v_t).
\]
The point $u_t$, the iterate $z_{t-1}$, and the indicator $I_t$ are
$\mathcal F_{t-1}$-measurable. By construction,
$\E_t\zeta_t=0$ and
$\E_t(\|\zeta_t\|^2-v_t)=0$. Thus both displayed processes are scalar martingales adapted to $(\mathcal F_j)_{j\ge0}$.
Moreover, clipping gives $\|\widehat g_t\|\le\Lambda_{\Delta,T}$ almost surely and Jensen's inequality gives
$\|\E_t\widehat g_t\|\le\Lambda_{\Delta,T}$; hence
$\|\zeta_t\|\le2\Lambda_{\Delta,T}$ almost surely.

For the signed martingale, $\|z_{t-1}-x^\star\|\le D$ and the localized bound on $v_t$ imply
\begin{align*}
 |\Delta M_t^{(1)}|
 &\le2a\Lambda_{\Delta,T}D
 =\frac{5D^2}{64\ell_\rho},\\
 V_T^{(1)}
 &:=\sum_{t=1}^T\E_t[(\Delta M_t^{(1)})^2]\le a^2D^2\sum_{t=1}^TI_tv_t
 \le a^2D^2T\frac{\Lambda_{\Delta,T}\Delta}{80D}
 =\frac{5D^4}{2048\ell_\rho}.
\end{align*}
For the quadratic martingale,
$|\|\zeta_t\|^2-v_t|\le4\Lambda_{\Delta,T}^2$ and
$\|\zeta_t\|^4\le4\Lambda_{\Delta,T}^2\|\zeta_t\|^2$ almost surely. Consequently,
\begin{align*}
 |\Delta M_t^{(2)}|
 &\le8a^2\Lambda_{\Delta,T}^2
 =\frac{25D^2}{2048\ell_\rho^2},\\
 V_T^{(2)}
 &:=\sum_{t=1}^T\E_t[(\Delta M_t^{(2)})^2]\le16a^4\Lambda_{\Delta,T}^2\sum_{t=1}^TI_tv_t
 \le16a^4\Lambda_{\Delta,T}^2T
 \frac{\Lambda_{\Delta,T}\Delta}{80D}=\frac{125D^4}{2097152\ell_\rho^3}.
\end{align*}

\paragraph{Freedman bound and one-phase contraction.}
We apply the two-sided maximal Freedman inequality stated in Appendix~\ref{app:phaseproof} with $u=D^2/8$. For the signed martingale, the denominator in the Freedman exponent satisfies
\begin{align*}
 2\left(V_T^{(1)}+\frac{L_1u}{3}\right)
 &\le\frac{2D^4}{\ell_\rho}
 \left(\frac5{2048}+\frac5{1536}\right)=\frac{35D^4}{3072\ell_\rho}
 \le\frac{D^4}{64\ell_\rho}
 =\frac{u^2}{\ell_\rho}.
\end{align*}
For the quadratic martingale, $\ell_\rho>1$ gives
\begin{align*}
 2\left(V_T^{(2)}+\frac{L_2u}{3}\right)
 &\le\frac{2D^4}{\ell_\rho^2}
 \left(\frac{125}{2097152}+\frac{25}{49152}\right)\le\frac{D^4}{64\ell_\rho}
 =\frac{u^2}{\ell_\rho}.
\end{align*}
The second numerical inequality holds because
$2(125/2097152+25/49152)<1/64$.
Thus the exponent $u^2/[2(V+Lu/3)]$ is at least
$\ell_\rho$ for each martingale, and each failure probability is at most
$2e^{-\ell_\rho}=\rho/4$. On their common good event, which has probability at least $1-\rho/2$, the stopped recursion and
\eqref{eq:app-initial-potential} give, uniformly in $j\le T$,
\[
 \Phi_{j\wedge\tau}\le\frac32D^2+\frac{D^2}{16}+\frac{D^2}{128T}+\frac{D^2}{400}+\frac{D^2}{8}+\frac{D^2}{8}<3D^2,
\]
where the terms after $3D^2/2$ are, respectively, the accumulated bias inner product, squared bias, predictable variance, and the two martingale deviations. If $\tau\le T$, choosing $j=\tau$ contradicts the definition of $\tau$. Hence $\tau=T+1$ on the good event. Finally,
$A_T=A_0+Ta=6D^2/\Delta$ and
$A_TF(y_T)\le\Phi_T\le3D^2$, so
$F(y_T)\le\Delta/2$. Since the good event has probability at least $1-\rho/2\ge1-\rho$, the theorem follows.
\end{proof}

\begin{remark}[Meaning of ``parameter-free'']
Theorem~\ref{thm:pf} is fixed-budget adaptation. The unknown parameters determine which horizons satisfy \eqref{eq:pf-horizon}; the algorithm does not certify from the observed trajectory that a particular unknown sufficient horizon has already been reached. A common-budget restart wrapper can allocate $T=\lfloor N/S\rfloor$ to every phase, losing at most another factor $S$ relative to phase-wise tuning. In the strongly convex wrapper, $\mu$ remains an operational parameter because it determines the shrinking sets $Q_s$.
\end{remark}

\section{Experimental Details and Additional Results}\label{app:experiments}
\paragraph{Construction.}
We sample 64 vehicle and 64 animal images from the CIFAR-10 training set and extract frozen DINOv2 ViT-S/14 features. The encoder is never updated; optimization acts only on a convex kernel head, the dual counterpart of last-layer fitting on fixed representations. After row normalization, let $G$ be the feature Gram matrix. We choose a ridge $\gamma\ge0$ and a scalar $c>0$ so that $K=(G+\gamma I)/c$ has largest eigenvalue one and condition number 20. The labels are $y_i=1$ for vehicles and $y_i=-1$ for animals. For fixed weights $\omega_i>0$, the component objective and gradient are
\[
 f_i(\alpha)=\omega_i[\cosh((K\alpha)_i-y_i)-1],\qquad g_i(\alpha)=\omega_i\sinh((K\alpha)_i-y_i)K_{i:}.
\]
Because $K$ is invertible, $\alpha^\star=K^{-1}y$ gives $f_i(\alpha^\star)=0$ and $g_i(\alpha^\star)=0$ for every $i$. Furthermore,
\[
 \nabla^2 f(\alpha)=\frac1nK^\top\operatorname{diag}\!\bigl(\omega_i\cosh((K\alpha)_i-y_i)\bigr)K,
\]
so $f$ is $\mu_{\rm exp}$-strongly convex with $\mu_{\rm exp}:=\min_i\omega_i\lambda_{\min}(K)^2/n$, and $\|\nabla^2 f(\alpha)\|\le R^2(1+f(\alpha))$ for $R:=\max_i\|K_{i:}\|$ because $n^{-1}\sum_i\omega_i=1$. Thus the experiment has exact interpolation and an $(H_0,H_1)$ Hessian bound with $H_0=H_1=R^2$.

In the equal-weight regime, $\omega_i=1$. In the power-law regime, the weights are drawn once from a Pareto distribution with shape $1.5$, normalized to mean one, and then stored as fixed data. Their minimum, median, 99th percentile, and maximum are $0.392$, $0.590$, $7.88$, and $9.75$, respectively. At initialization, the descriptive Hill estimate for the tail index of component-gradient norms is $1.52$. This is an empirical tail diagnostic only: a finite dataset does not generate a literally infinite-variance oracle.

\paragraph{Protocol and tuning.}
All methods start from zero and use the same streams of uniformly sampled component indices. Since every update uses one component gradient, iteration count and oracle calls coincide. Each method is tuned on seeds 101--106 and evaluated on the disjoint seeds 20001--20030. We choose among 16 configurations by final 90th-percentile relative gap, then by the median, and fix the selected configuration before evaluation. The budget is $8{,}192$, the desired absolute gap is $10^{-5}$, and RCSAG uses 16 equal-length restart phases. Its step multiplier ranges over $\{0.03,0.1,0.3,1\}$ and its fixed-budget clipping multiplier over $\{64,512,4096,32768\}$. Clipped SGD follows Algorithm~2 of \citet{gorbunov2020heavy}; its step parameter ranges over $\{0.1,0.3,1,1.8\}/L_{\rm init}$ and its clipping radius over $\{0.25,1,4,16\}$ times the median initial component-gradient norm. For both SSTM variants, the weight parameter ranges over $\{0.5,2,8,32\}N$ and the clipping parameter over $\{0.025,0.1,0.4,1.6\}$. These are practical tuning grids; they do not attempt to instantiate the conservative numerical constants of Theorem~\ref{thm:phase}.

\begin{table}[H]
\centering
\caption{Held-out results after $8{,}192$ component-gradient calls. $N_{90}$ is the first checkpoint after which at least 90\% of runs remain below $10^{-5}$; ``--'' means that no such checkpoint exists.}
\label{tab:experiment-full}
\small
\setlength{\tabcolsep}{3.5pt}
\begin{tabular}{lccccc}
\toprule
& \multicolumn{2}{c}{Equal weights} & \multicolumn{3}{c}{Power-law weights}\\
Method & Median gap & $N_{90}$ & Median gap & $N_{90}$ & Clipped\\
\midrule
RCSAG & $3.47\times10^{-7}$ & $6{,}705$ & $1.22\times10^{-6}$ & $8{,}192$ & $0.15\%$\\
Clipped SGD & $1.58\times10^{-4}$ & -- & $2.95\times10^{-4}$ & -- & $7.42\%$\\
Clipped-SSTM & $4.14\times10^{-4}$ & -- & $6.10\times10^{-4}$ & -- & $2.23\%$\\
R-clipped-SSTM & $3.21\times10^{-2}$ & -- & $3.32\times10^{-2}$ & -- & $60.51\%$\\
\bottomrule
\end{tabular}
\end{table}

\begin{figure}[H]
\centering
\includegraphics[width=0.72\textwidth]{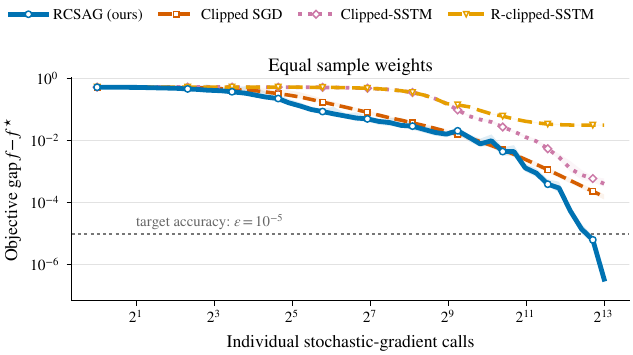}
\vspace{-0.8em}
\caption{Equal-weight control: median gap with 10th--90th percentile bands over 30 held-out seeds.}
\label{fig:real-cosh-equal}
\vspace{-0.5em}
\end{figure}

\paragraph{Additional observations.}
The equal-weight control in Figure~\ref{fig:real-cosh-equal} preserves the same ranking: RCSAG reaches $10^{-5}$ with sustained 90\% success after $6{,}705$ calls. Under fixed Pareto weights, its median gap is $1.22\times10^{-6}$ versus $2.95\times10^{-4}$ for Clipped SGD. The median clipped fraction of RCSAG changes from zero to only $0.15\%$, so the method clips rare large components rather than most updates.

\paragraph{Synthetic finite-$p$ check.}
We also rerun the comparison on the eight-dimensional strongly convex objective $f(x)=\sum_{j=1}^8\lambda_jb^{-2}[\cosh(bx_j)-1]$, where $b=1.5$ and $\lambda_j$ are geometrically spaced in $[1,16]$. The oracle adds radial Student-$t$ noise with $1.8$ degrees of freedom and scale $(\sigma_0^p+\sigma_2^pF(x)^{p/2})^{1/p}$, using $p=1.5$, $\sigma_0=0.1$, and $\sigma_2=0.4904$. Each method is selected from 16 configurations on six tuning seeds and evaluated on 300 disjoint seeds with $16{,}384$ individual gradient calls.

\begin{figure}[H]
\centering
\includegraphics[width=0.90\textwidth]{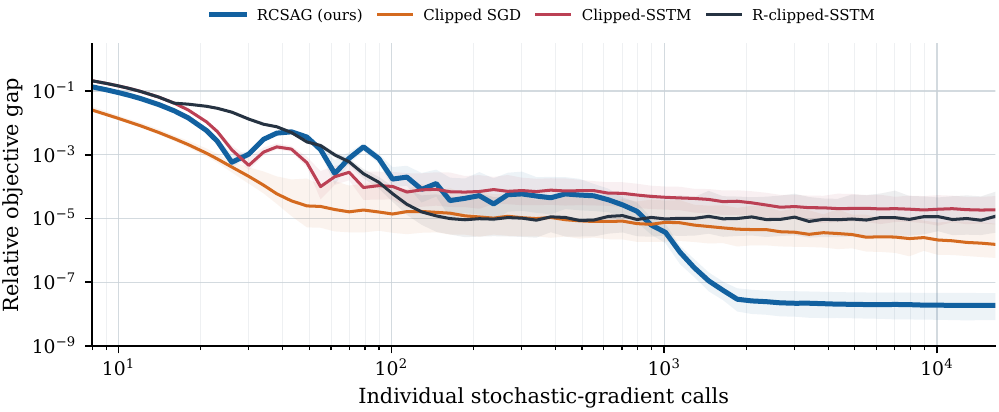}
\vspace{-0.8em}
\caption{Synthetic Student-$t$ experiment: median relative gap with 10th--90th percentile bands over 300 held-out seeds.}
\label{fig:synthetic-additive}
\vspace{-0.5em}
\end{figure}

RCSAG reaches relative gap $10^{-6}$ with sustained 90\% success after $1{,}290$ calls and succeeds on all 300 runs by the final budget. Its final 90th-percentile gap is $4.60\times10^{-8}$, compared with $6.86\times10^{-6}$ for Clipped SGD; neither SSTM variant reaches the target. The median fraction of clipped RCSAG updates is $10.1\%$. As above, these runs use tuned practical constants and do not certify the theorem's numerical constants.

\end{document}